\newtheorem{theorem}{Theorem}
\newtheorem{lemma}{Lemma}
\newtheorem{proposition}{Proposition}
\newtheorem{corollary}{Corollary}
\theoremstyle{definition}
\newtheorem{definition}{Definition}
\theoremstyle{remark}
\newtheorem{remark}{Remark}
\newtheorem{conjecture}{Conjecture}
\def\id{\mathrm{id}}
\begin{document}

\title{On $q$-tensor product of Cuntz algebras}
\author{Alexey Kuzmin\footnote{Department of Mathematical Sciences, Chalmers University of Technology and University of Gothenburg, Gothenburg, Sweden, vagnard.k@gmail.com},
\and 
Vasyl Ostrovskyi\footnote{Institute of Mathematics. NAS of Ukraine, vo@imath.kiev.ua}, 
\and 
Danylo Proskurin\footnote{Faculty of Computer Sciences and Cybernetics, Kiev National Taras Shevchenko University, prosk@univ.kiev.ua} 
\and 
Roman Yakymiv\footnote{Faculty of Computer Sciences and Cybernetics, Kiev National Taras Shevchenko University, yakymiv@univ.kiev.ua}
}
\date{}

\maketitle
\begin{center}
{\it To 75-th birthday of our teacher Yurii S. Samoilenko}
\end{center}
\begin{abstract}
We consider $C^*$-algebra $\mathcal{E}_{n,m}^q$, which is a $q$-twist of two Cuntz-Toeplitz algebras. For the case $|q|<1$ we give an explicit formula, which untwists the $q$-deformation, thus showing that the isomorphism class of $\mathcal{E}_{n,m}^q$ does not depend on $q$. For the case $|q|=1$ we give an explicit description of all ideals in $\mathcal{E}_{n,m}^q$. In particular, $\mathcal{E}_{n,m}^q$ contains a unique largest ideal $\mathcal{M}_q$. Then we identify $\mathcal{E}_{n,m}^q / \mathcal{M}_q$ with the Rieffel deformation of  $\mathcal{O}_n \otimes \mathcal{O}_m$ and use a K-theoretical argument to show that the isomorphism class does not depend on $q$. 
\end{abstract}
\ \\
{\bf Key Words:}  Cuntz-Toeplitz algebra, Rieffel's deformation, $q$-deformation, Fock representation, K-theory. \\
\ \\
{\bf MSC 2010: 46L05, 46L35, 46L80, 46L65, 47A67, 81R10} \  \\



\section{Introduction}
\label{intro}
\textbf{1.} Since early 80-th, a wide study of non-classical models of mathematical physics, quantum group theory and noncommutative probability (see e.g., \cite{BoSpe2,fiv,green,mac,MPe,zag}) gave rise to a number of papers on operator algebras generated by various deformed commutation relations, see \cite{BoSpe,Klimek,mar} etc. A general approach to the study of these relations has been provided by the framework of quadratic $*$-algebras allowing Wick ordering (Wick algebras), see \cite{jsw}. The class of Wick algebras includes, among others, deformations of canonical commutation relations of quantum mechanics, some quantum groups and quantum homogeneous spaces, see e.g., \cite{gisselson,Klimyk,vaksman1,woronowicz}.
On the other hand, one can consider Wick algebras as a deformation of Cuntz-Toeplitz algebra, see \cite{cun,dn,jsw}.

Let $\{T_{ij}^{kl},\ i,j,k,l=\overline{1,d}\}\subset\mathbb C$, $T_{ij}^{kl}=\overline{T}_{ji}^{lk}$. Wick algebra  $W(T)$, see \cite{jsw}, is the $*$-algebra generated by elements $a_j$, $a_j^*$, $j=\overline{1,d}$ subject to the relations
\[
a_i^*a_j=\delta_{ij}\mathbf 1+\sum_{k,l=1}^d T_{ij}^{kl} a_l a_k^*.
\]
It was shown in \cite{jsw} that properties of $W(T)$ depend on a self-adjoint operator $T$ called the operator of coefficients of $W(T)$. Namely, let $\mathcal{H}=\mathbb C^d$ and $e_1$, \dots, $e_d$ be the standard orthonormal basis of $\mathcal{H}$. Construct
\[
T\colon \mathcal{H}^{\otimes 2}\rightarrow \mathcal{H}^{\otimes 2},\quad
T e_k\otimes e_l=\sum_{i,j=1}^d T_{ik}^{lj} e_i\otimes e_j.
\]
Notice that the subalgebra of $W(T)$ generated by $\{a_j\}_{j=1}^d$ is free and can be identified with the full tensor algebra $\mathcal F=\bigoplus_{n=0}^{\infty} \mathcal{H}^{\otimes n}$ via
\[
a_{i_1} \dots a_{i_k} \mapsto e_{i_1} \otimes \dots \otimes e_{i_k} \in \mathcal{H}^{\otimes k}.
\]

\begin{definition}
The Fock representation of $W(T)$ is the unique irreducible $*$-representation $\pi_{F,T}$ determined by a cyclic vector $\Omega$, $||\Omega||=1$, such that
\[
\pi_{F,T} (a_j^*)\Omega =0,\ j=\overline{1,d}.
\]
\end{definition}

The problem of existence of $\pi_{F,T}$ is non-trivial and is one of the central problems in representation theory of Wick algebras. Some sufficient conditions are collected in the following theorem, see \cite{BoSpe,jps,jsw}.
\begin{theorem}\label{thm_fock}
The Fock representation $\pi_{F,T}$ of $W(T)$
exists if one of the conditions below is satisfied
\begin{itemize}
\item The operator of coefficients $T\ge 0$;
\item $||T||<\sqrt{2}-1$;
\item $T$ is \it{braided}, i.e. $(\mathbf 1\otimes T)(T\otimes\mathbf 1)
(\mathbf 1\otimes T)=(T\otimes\mathbf 1)
(\mathbf 1\otimes T)(\mathbf 1\otimes T)$ on $\mathcal{H}^{\otimes 3}$ and $||T||\le 1$. Moreover, if $||T||<1$ then $\pi_{F,T}$ is a faithful representation of $W(T)$ and $||\pi_{F,T}(a_j)||<(1-||T||)^{-\frac{1}{2}}$. If $||T||=1$, one can not guarantee boundedness of $\pi_{F,T}$ and in this case $\ker \pi_{F,T}$ is a $*$-ideal $\mathcal I_2$ generated as a $*$-ideal by $\ker (\mathbf 1+T)$. Hence $\pi_{F,T}$ is a faithful representation of $W(T)/\mathcal I_2$.
\end{itemize}
\end{theorem}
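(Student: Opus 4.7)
I would realise the candidate Fock representation concretely on the algebraic Fock space $\mathcal F=\bigoplus_{n\ge 0}\mathcal H^{\otimes n}$, with $\Omega\in\mathcal H^{\otimes 0}$ the vacuum, $\pi_{F,T}(a_i)\xi:=e_i\otimes\xi$ the left creation, and the action of $\pi_{F,T}(a_i^*)$ determined by iterating the Wick relation $a_i^*a_j=\delta_{ij}\mathbf 1+\sum T_{ij}^{kl}a_la_k^*$. A straightforward induction on the length of monomials then shows
\[
\bpair{a_{i_1}\cdots a_{i_n}\Omega}{a_{j_1}\cdots a_{j_n}\Omega}
=\bpair{e_{i_n}\otimes\cdots\otimes e_{i_1}}{P_n(e_{j_n}\otimes\cdots\otimes e_{j_1})},
\]
where $P_n\colon\mathcal H^{\otimes n}\to\mathcal H^{\otimes n}$ is the Wick operator, specified recursively (see \cite{BoSpe,jsw}) by $P_1=\mathbf 1$ and $P_n=(\mathbf 1\otimes P_{n-1})R_n$, with
\[
R_n=\sum_{k=0}^{n-1}T_1T_2\cdots T_k,\qquad T_i=\mathbf 1^{\otimes(i-1)}\otimes T\otimes\mathbf 1^{\otimes(n-1-i)}.
\]
The existence of $\pi_{F,T}$ as a $*$-representation is thereby reduced to the assertion $P_n\ge 0$ for every $n\ge 2$; boundedness of $\pi_{F,T}$ amounts to a quantitative lower bound on $P_n$.

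For the first bullet ($T\ge 0$) I would follow the combinatorial proof of \cite{BoSpe}: expand $P_n$ as an indexed sum over reduced words in $S_n$ whose summands are products of positive $T_i$'s, and check positivity term by term. For the second bullet ($\|T\|<\sqrt 2-1$) I would estimate $\|R_n-\mathbf 1\|\le\sum_{k\ge 1}\|T\|^k=\|T\|/(1-\|T\|)$; the numerical threshold $\sqrt 2-1$ is chosen precisely so that, iterating the recursion together with the self-adjointness of $T$, one obtains a uniform lower bound $P_n\ge c\,\mathbf 1$ with $c>0$, as in \cite{jsw}. For the third bullet (braided $T$ with $\|T\|\le 1$), I would use the Yang-Baxter identity to realise $T_1,\dots,T_{n-1}$ as a Hecke-type representation of the braid group; this lets one define $T_\sigma$ unambiguously for each $\sigma\in S_n$ via any reduced word, and the resulting symmetric expression $P_n=\sum_{\sigma\in S_n}T_\sigma$ is manifestly self-adjoint and admits a factorisation showing $P_n\ge 0$ whenever $\|T\|\le 1$.

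Granted positivity of $P_n$, the case $\|T\|<1$ gives strict positivity $P_n\ge c_n\mathbf 1>0$ and, via the Wick relation, the bound $\pi_{F,T}(a_j)^*\pi_{F,T}(a_j)\le(1-\|T\|)^{-1}\mathbf 1$, yielding $\|\pi_{F,T}(a_j)\|<(1-\|T\|)^{-1/2}$. Faithfulness on the whole Wick algebra follows from the Wick normal form: every $w\in W(T)$ admits a unique finite expansion $w=\sum c_{I,J}a_Ia_J^*$, and the injectivity of $\pi_{F,T}$ on creation monomials together with the $a_j^*\Omega=0$ condition forces $w=0$ as soon as $\pi_{F,T}(w)=0$.

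The principal obstacle is the boundary case $\|T\|=1$, where $P_n$ develops a nontrivial kernel and $\pi_{F,T}$ need not extend to a $C^*$-representation. Here I would identify $\ker P_n\subset\mathcal H^{\otimes n}$ inductively in $n$: using the braid relations, one shows that $\ker P_n$ is exactly the smallest subspace of $\mathcal H^{\otimes n}$ that contains every $\mathcal H^{\otimes(i-1)}\otimes\ker(\mathbf 1+T)\otimes\mathcal H^{\otimes(n-i-1)}$ and is stable under each $T_j$. Translated back via the Wick normal form, this identifies $\ker\pi_{F,T}$ with the $*$-ideal $\mathcal I_2$ generated by $\ker(\mathbf 1+T)\subset\mathcal H^{\otimes 2}$, so that $\pi_{F,T}$ descends to a faithful $*$-representation of $W(T)/\mathcal I_2$. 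The Hecke-algebraic propagation of the two-particle kernel to higher levels is the delicate technical core of the argument.
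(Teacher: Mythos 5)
The paper itself does not prove Theorem \ref{thm_fock}: it is quoted from \cite{BoSpe,jps,jsw}, so your proposal has to be judged against the proofs in those references. Your overall architecture --- realising $\pi_{F,T}$ on the full Fock space, reducing existence to positivity of the Wick kernels $P_n$ via the recursion $P_n=(\mathbf 1\otimes P_{n-1})R_n$, and handling $\|T\|=1$ through the kernels of the $P_n$ --- is indeed the architecture used there.

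However, at the two places where you give an argument rather than a citation, the argument as stated fails. For $T\ge 0$, positivity of $P_n$ cannot be checked ``term by term'': already in $P_3=\mathbf 1+T_1+T_2+T_1T_2+T_2T_1+T_2T_1T_2$ the summands $T_1T_2$ and $T_2T_1$ are products of non-commuting positive operators, hence in general not even self-adjoint, let alone positive (only grouped or conjugated expressions such as $T_2T_1T_2$ are); the $T\ge 0$ case is proved in \cite{jsw} by a genuinely different inductive operator-inequality argument. For the second bullet, the estimate $\|R_n-\mathbf 1\|\le\|T\|/(1-\|T\|)$ only yields invertibility of $R_n$, and only for $\|T\|<1/2$; positivity of $P_n=(\mathbf 1\otimes P_{n-1})R_n$ requires conjugating by $(\mathbf 1\otimes P_{n-1})^{1/2}$ and controlling the condition numbers of the $P_{n-1}$ along the induction, and it is precisely this bootstrapping that produces the constant $\sqrt 2-1$; your sketch does not contain it. Finally, the statements carrying all the weight in the third bullet --- positivity of $P_n=\sum_{\sigma}T_\sigma$ for a braided contraction, and, at $\|T\|=1$, the identification of $\ker P_n$ with $\sum_{i}\mathcal H^{\otimes(i-1)}\otimes\ker(\mathbf 1+T)\otimes\mathcal H^{\otimes(n-i-1)}$ together with its translation into $\ker\pi_{F,T}=\mathcal I_2$ --- are exactly the main theorems of \cite{BoSpe} and \cite{jps}; writing ``admits a factorisation showing $P_n\ge 0$'' and ``one shows inductively'' leaves the core of the theorem unproved, and your description of the kernel as the smallest $T_j$-stable subspace containing the shifted copies of $\ker(\mathbf 1+T)$ is not quite the statement proved in \cite{jps}, which is the plain sum. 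A smaller point: faithfulness for $\|T\|<1$ needs an induction on the annihilation length in the Wick normal form, since applying $\pi_{F,T}(w)$ to $\Omega$ only controls the coefficients with empty annihilation multi-index.
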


Another important question in the theory of Wick algebras is the question of stability of isomorphism classes of $\mathcal{W}(T) = C^*(W(T))$ for the case $||T|| < 1$. The following problem was posed in \cite{jsw2}.
\begin{conjecture}\label{q_stab}
Let $T : \mathcal{H}^{\otimes 2} \rightarrow \mathcal{H}^{\otimes 2}$ be a self-adjoint braided operator and $||T|| < 1$. Then $\mathcal{W}(T) \simeq \mathcal{W}(0)$.
\end{conjecture}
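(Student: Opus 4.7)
The plan is to realize $\mathcal{W}(T)$ and $\mathcal{W}(0)$ on a common Hilbert space via their faithful Fock representations and then to exhibit inside $\pi_{F,T}(\mathcal{W}(T))$ an explicit family of generators satisfying the Cuntz-Toeplitz relations. By Theorem~\ref{thm_fock}, for braided self-adjoint $T$ with $\|T\|<1$ both $\pi_{F,T}$ and $\pi_{F,0}$ are faithful, so it suffices to identify $\mathcal{W}(T)$ with $\pi_{F,T}(\mathcal{W}(T))\subset\mathcal{B}(\mathcal{F}_T)$ and $\mathcal{W}(0)$ with $\pi_{F,0}(\mathcal{W}(0))\subset\mathcal{B}(\mathcal{F}_0)$, where $\mathcal{F}=\bigoplus_{n\ge 0}\mathcal{H}^{\otimes n}$ is the underlying vector space carrying the two different Fock inner products $\langle\cdot,\cdot\rangle_T$ and $\langle\cdot,\cdot\rangle_0$.

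The first step is to compare the two inner products through the Bo\.zejko-Speicher quasi-symmetrizer $P_T=\bigoplus_n P_T^{(n)}$, where $P_T^{(n)}=\sum_{\sigma\in S_n}T_\sigma$ is built from the representation of $S_n$ on $\mathcal{H}^{\otimes n}$ coming from the braid relation. For braided $T$ with $\|T\|<1$ each $P_T^{(n)}$ is strictly positive, so $U:=P_T^{1/2}$ defines a unitary from $(\mathcal{F},\langle\cdot,\cdot\rangle_T)$ onto $(\mathcal{F},\langle\cdot,\cdot\rangle_0)$. Conjugation by $U$ yields a spatial $*$-isomorphism $\mathcal{B}(\mathcal{F}_T)\cong \mathcal{B}(\mathcal{F}_0)$ under which $\pi_{F,T}(a_j)$ is sent to $\pi_{F,0}(a_j)\,A_j$ for a specific bounded operator $A_j$ expressible through $P_T$ and the number operator. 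This places both $C^*$-algebras inside the same $\mathcal{B}(\mathcal{F}_0)$ but does not yet identify them.

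The heart of the argument is to produce, inside $\pi_{F,T}(\mathcal{W}(T))$, new generators $b_j=\pi_{F,T}(a_j)\,R$, where $R$ is a bounded operator (a function of $T$ and the number operator) chosen so that the twisted relations $a_i^*a_j=\delta_{ij}\mathbf{1}+\sum T_{ij}^{kl}a_l a_k^*$ collapse into the free relations $b_i^*b_j=\delta_{ij}\mathbf{1}$. Substituting the ansatz leads to a fixed-point equation of the form $R=\mathbf{1}+\Lambda_T(R)$, where $\Lambda_T$ is a linear map whose norm should be controlled by a power of $\|T\|<1$, so that Banach's contraction principle furnishes a unique bounded $R$. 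Once the $b_j$ are in place, the universal property of the Cuntz-Toeplitz algebra gives a $*$-homomorphism $\mathcal{W}(0)\to\mathcal{W}(T)$, and surjectivity follows by inverting $b_j=\pi_{F,T}(a_j)R$ as a Neumann series to recover each $a_j$ from the $b_j$.

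The main obstacle, and the reason the conjecture remains open in full generality, is verifying that the fixed-point $R$ is not merely a formal expression on the algebraic Fock space but an actual bounded operator lying in the $C^*$-algebra $\mathcal{W}(T)$ itself, rather than in a larger enveloping von Neumann algebra. In other words, one must establish both the convergence of the contraction scheme in operator norm and the closedness of $\mathcal{W}(T)$ under the spectral functional calculus needed to write $R$. For the two-factor $q$-twist treated in the paper the braid structure degenerates to a single flip between two Cuntz blocks, which presumably permits an explicit closed form for $R$ and thereby the untwisting formula announced in the abstract; proving the conjecture for arbitrary braided $T$ will require a substantially more delicate analysis of the map $\Lambda_T$.
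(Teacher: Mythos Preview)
The statement you are attempting to prove is labeled \emph{Conjecture} in the paper, not Theorem: the paper does not prove it and, as far as is known, it remains open in the stated generality. The paper establishes only the special instance $\mathcal{E}_{n,m}^q\simeq\mathcal{E}_{n,m}^0$ (Theorem~\ref{qstab_thm}), where the braided operator $T$ is the specific rank-two $q$-flip between the $\mathbb{C}^n$ and $\mathbb{C}^m$ blocks. That proof works because this particular $T$ allows one to write down explicitly the ``untwisting'' element $\widehat{t}_r=(\mathbf{1}-Q)t_r(\mathbf{1}-|q|^2Q)^{-1/2}$ and verify by hand that $\{s_i,\widehat{t}_r\}$ satisfy the Cuntz--Toeplitz relations and still generate the whole algebra; the inverse map is likewise given by an explicit convergent series. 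No abstract fixed-point argument is used.

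Your proposal is not a proof but a strategy sketch, and you yourself identify the gap: you need the correction operator $R$ solving $R=\mathbf{1}+\Lambda_T(R)$ to lie in $\mathcal{W}(T)$ rather than merely in $\mathcal{B}(\mathcal{F}_0)$ or the bicommutant, and you need $\Lambda_T$ to be a genuine contraction on a suitable Banach space for all $\|T\|<1$. Neither of these is established, and the known partial results (e.g.\ $\|T\|<\sqrt{2}-1$ in \cite{jsw2}) arise precisely because such norm estimates on $\Lambda_T$ are only available under extra smallness assumptions. In short, you have correctly located where the difficulty lies, but locating a difficulty is not the same as overcoming it; what you have written does not constitute a proof of the conjecture, and the paper makes no claim to have one either.
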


In particular, the authors of \cite{jsw2} have shown that the conjecture holds for the case $||T|| < \sqrt{2} - 1$, for more results on the subject see \cite{dn}, \cite{nk}.

Consider the case $T=0$ in a few more details. If $d=\dim \mathcal{H} =1$, then $W(0)$ is generated by a single isometry $s$, $s^*s=1$. In this case the universal $C^*$-algebra $\mathcal E$ of $W(0)$ exists and is isomorphic to the $C^*$-algebra generated by the unilateral shift $S$ in $l_2(\mathbb Z_+)$. Notice also that  $\pi_{F,0}(s)=S$, so the Fock representation of the $C^*$-algebra $\mathcal E$ is faithful.  The ideal $\mathcal I$ in $\mathcal E$, generated by $\mathbf 1 - ss^*$ is isomorphic to the algebra of compact operators and $\mathcal E/\mathcal I\simeq C(S^1)$, see \cite{coburn}. When $d\ge 2$, $W(0)$ is generated by $s_j$, $s_j^*$, such that
\[
s_i^*s_j=\delta_{ij}\mathbf 1,\quad i,j=\overline{1,d}.
\]
The Fock representation $\pi_{F,d}$ acts on $\mathcal F:=\mathcal{F}_d$ as follows
\begin{align*}
\pi_{F,d}(s_j)\Omega&=e_j,\quad \pi_{F,d}(s_j)e_{i_1}\otimes\cdots\otimes e_{i_k}=e_j\otimes e_{i_1}\otimes\cdots\otimes e_{i_k},\ k\ge 1,\\
\pi_{F,d}(s_j^*)\Omega&=0,\quad \pi_{F,d}(s_j^*)e_{i_1}\otimes\cdots\otimes e_{i_k}=\delta_{ji_1} e_{i_2}\otimes\cdots\otimes e_{i_k},\ k\ge 1.
\end{align*}
The universal $C^*$-algebra generated by $W(0)$ with $d\ge 2$ exists and is called the Cuntz-Toeplitz agebra $\mathcal O_d^{(0)}$. It is isomorphic to $C^*(\pi_{F,d}(W(0)))$, so the Fock representation of $\mathcal O_d^{(0)}$ is faithful, see \cite{cun}. Further, the ideal $\mathcal I$ generated by $1-\sum_{j=1}^d s_js_j^*$ is the unique largest ideal in $\mathcal O_d^{(0)}$. It is isomorphic to the algebra of compact operators on $\mathcal F_d$. The quotient $\mathcal O_d^{(0)}/\mathcal I$ is called the Cuntz algebra $\mathcal O_d$. It is nuclear (as well as $\mathcal O_d^{(0)}$),  simple and purely infinite, see \cite{cun} for more details.

\noindent
\textbf{2}. In this paper we study the $C^*$-algebras $\mathcal E_{n,m}^q$ generated by Wick algebras with operator of coefficients $T$ described as follows. Let $\mathcal{H}=\mathbb C^n\oplus\mathbb C^m$, $|q|\le 1$ and
\begin{align*}
& T u_1\otimes u_2  =0,\quad T v_1\otimes v_2=0,\quad u_1,u_2\in\mathbb C^n,\ v_1,v_2\in\mathbb C^m,\\
&T u\otimes v  = q v\otimes u,\quad T v\otimes u=\overline q u\otimes v,\quad u\in\mathbb C^n,\ v\in\mathbb C^m.
\end{align*}
We denote the corresponding Wick algebra by $WE_{n,m}^q$. Notice that $T$ satisfies the braid relation and $||T||=|q|\le 1$ for any $n,m\in\mathbb N$. In particular, the Fock representation $\pi_{F,q}$ exists for $|q|\le 1$ and is faithful on $WE_{n,m}^q$ for $|q|<1$.

The case $n=1$, $m=1$ was studied by various authors. Namely, $WE_{1,1}^q$ is generated by isometries $s_1$, $s_2$ subject to the relations \[s_1^*s_2= q s_2 s_1^*.\] It is easy to see that the corresponding universal $C^*$-algebra $\mathcal E_{1,1}^q$ exists for any $|q|\le 1$.

If $|q|<1$, the main result of \cite{jps2} states that  $\mathcal E_{1,1}^q\simeq\mathcal E_{1,1}^{(0)}=\mathcal O_2^{(0)}$ for any $|q|<1$. In particular the Fock representation of $\mathcal E_{1,1}^q$ is faithful.

The case $|q|=1$ was studied in \cite{kab,prolett,weber}. In this situation the additional relation \[s_2s_1=q s_1 s_2\]  holds in $\mathcal E_{1,1}^q$. It was shown that  $\mathcal E_{1,1}^q$ is nuclear for any $|q|=1$. Let $\mathcal M_q$ be the ideal generated by the projections $1-s_1s_1^*$ and $1-s_2s_2^*$. Then $\mathcal E_{1,1}^q/\mathcal M_q\simeq\mathcal A_q$, where $\mathcal A_q$ is the non-commutative torus, see \cite{rieff},
\[
\mathcal A_q=C^*(u_1,u_2\, |\, u_1^*u_1=u_1u_1^*=\mathbf 1,\
u_2^*u_2=u_2u_2^*=\mathbf 1,\ u_2^*u_1=q u_1 u_2^*).
\]
If $q$ is not a root of unity, then the corresponding non-commutative torus $\mathcal A_q$ is simple and $\mathcal M_q$ is the unique largest ideal in $\mathcal E_{1,1}^q$. Let us stress that unlike the case $|q|<1$, the $C^*$-isomorphism class of $\mathcal E_{1,1}^q$ is ``unstable'' with respect to $q$. Namely,
$\mathcal E_{1,1}^{q_1}\simeq \mathcal E_{1,1}^{q_2}$ iff $\mathcal A_{q_1}\simeq\mathcal A_{q_2}$, see \cite{kab,prolett,weber}.

One can consider another higher-dimensional analog of $\mathcal{E}_{1,1}^q$. For a set $\{q_{ij}\}_{i, j=1}^d$ of complex numbers such that $|q_{ij}|\le 1$, $q_{ij}=\overline q_{ji}$, $q_{ii}=1$, and $d>2$,  one can consider a $C^*$\nobreakdash-algebra $\mathcal E_{\{q_{ij}\}}$, generated by $s_j$, $s_j^*$, $j=\overline{1,d}$ subject to the relations
\[
s_j^*s_j=1,\quad s_i^*s_j=q_{ij}s_js_i^*.
\]

The case $|q_{ij}|<1$ was considered in \cite{kuzm_pochek}, where it was proved that $\mathcal{E}_{\{ q_{ij} \}}$ is nuclear and the Fock representation is faithful. It turned out that the fixed point $C^*$-subalgebra of $\mathcal E_{\{q_{ij}\}}$ with respect to the canonical action of $\mathbb T^d$ is an AF-algebra and is independent of $\{q_{ij}\}$. However the conjecture that
$\mathcal E_{\{q_{ij}\}}\simeq\mathcal E_{\{0\}}$ remains open.

The case $|q_{ij}|=1$  was studied in \cite{jeu_pinto,kab,prolett}. It was shown that $\mathcal E_{\{q_{ij}\}}$ is nuclear for any such family $\{q_{ij}\}$ and the Fock representation is faithful. For more details on ideal structure and representation theory see \cite{jeu_pinto,kab}.

We focus on the study of $\mathcal E_{n,m}^q$ with $n,m\ge 2$ (the case $n=1$, $m\ge 2$ will be considered separately, see \cite{yakym}). It is generated by isometries $\{s_j\}_{j=1}^n$,  and $\{t_r\}_{r=1}^m$, satisfying commutation relations of the following form
\begin{align}\label{baseqrel}
s_i^*s_j&=0, \quad 1\le i\ne j \le n, \notag
\\
 t_r^*t_s&=0,\quad 0\le r\ne s \le m,
 \\
 s_j^*t_r&=q t_r s_j^*, \quad 0\le j \le n, \ 0\le r\le m. \notag
\end{align}
The analysis is separated into two conceptually different cases, $|q|<1$ and $|q|=1$.

If $|q|<1$, we show that $\mathcal E_{n,m}^q\simeq\mathcal E_{n,m}^0= \mathcal O_{n+m}^{(0)}$, where the latter is the Cuntz-Toeplitz algebra with $n+m$ generators.

For the case $|q|=1$, we prove that $\mathcal E_{n,m}^q$ is nuclear, contains a unique largest ideal $\mathcal M_q$, and the quotient $\mathcal O_n\otimes_q\mathcal O_m:=\mathcal E_{n,m}^q/\mathcal M_q$ is simple and purely infinite for any $q$ specified above. Then we use the Kirchberg-Phillips classification Theorem, see \cite{Kirchberg,Philips}, to get one of our main results. Namely we show that
\[
\mathcal O_n\otimes_q\mathcal O_m\simeq\mathcal O_n\otimes\mathcal O_m
\]
for any $q\in\mathbb C$, $|q|=1$. Further we prove, that the Fock representation of $\mathcal E_{n,m}^q$ is faithful for any $|q|=1$ and use this fact to prove that $\mathcal E_{n,m}^q$ is isomorphic to the Rieffel deformation of $\mathcal O_n^{(0)}\otimes\mathcal O_m^{(0)}$. Next we show that the isomorphism class of $\mathcal M_q$ is independent on $q$ and consider $\mathcal E_{n,m}^q$ as an (essential) extension of $\mathcal O_n\otimes\mathcal O_m$ by $\mathcal M_q$ and study the corresponding $\mathsf{Ext}$ group. In particular, if $\gcd (n-1,m-1)=1$, this group is zero. Thus in this case, $\mathcal{E}_{n,m}^q$ and $\mathcal{E}_{n,m}^1$ both determine the zero class in $\mathsf{Ext}(\mathcal{O}_n \otimes_q \mathcal{O}_m, \mathcal{M}_q)$.
We stress that unlike the case of extensions by compacts, one can not immediately deduce that two trivial essential extensions are isomorphic. So the problem of isomorphism $\mathcal{E}_{n,m}^q \simeq \mathcal{E}_{n,m}^1$ remains open for further investigations.

\noindent
\textbf{3}. Recall how the algebras generated by isometries discussed above, are related to algebras of deformed canonical commutation relations.

We start with the case of one degree of freedom. The algebra $G_q$ of $q$-deformed canonical commutation relations, see \cite{bied,mac}, is generated by elements $a$, $a^*$  such that
\[
a^* a- q a a^*=\mathbf 1,
\]
where $q\in [-1,1]$. It is known, see \cite{jsw2}, that the universal $C^*$-algebra $\mathcal G_q$ generated by $G_q$ exists for $q\in [-1,1)$ and
$\mathcal G_q\simeq\mathcal E$ for any $q\in(-1,1)$.

The algebra $G_{q,d}$ of quon commutation relations with $d$ degrees of freedom was introduced and studied in \cite{BoSpe2,fiv,green,zag}. Namely, $G_{q,d}$ is generated by $a_j$, $a_j^*$, $j=\overline{1,d}$, subject to the commutation relations
\[
a_j^* a_i=\delta_{ij}\mathbf 1+ q a_i a_j^*,\quad i,j=\overline{1,d}, \quad q\in (0,1).
\]
Notice that the operator $T$, corresponding to $G_{q,d}$ has the form
\[
T e_{i}\otimes e_j= q e_{j}\otimes e_i
\]
so it is a braided contraction with $||T|| = q$. In particular, for $q<\sqrt{2}-1$ one has $\mathcal{G}_{q,d}\simeq \mathcal O_d^0$, where $\mathcal {G}_{q,d}$ is the $C^*$-algebra generated by $G_{q,d}$.

A multiparameter version of quons was considered in \cite{BoSpe,mar,MPe}. The corresponding $*$\nobreakdash-algebra $G_{\{q_{ij}\}}$, $q_{ij}=\overline{q}_{ji}$, $|q_{ij}|\le 1$, $i,j=\overline{1,d}$, is generated by
\[
a_i^* a_j=\delta_{ij}\mathbf 1 + q_{ij} a_j a_i^*,\quad i,j=\overline{1,d}.
\]
The operator $T$ acts as $T e_i\otimes e_j= q_{ij} e_j\otimes e_i$, so it is  a braided contraction as well. For $|q_{ij}|<\sqrt{2}-1$ we get
$\mathcal{G}_{\{q_{ij}\}}\simeq\mathcal O_d^0$. However, if $|q_{ij}|=1$ for all $i\ne j$, and $|q_{ii}| < 1$, then
${\mathcal G_{\{q_{ij}\}}\simeq\mathcal E_{\{q_{ij}\}} }$, see \cite{prolett}.

Take $k\in (0,1)$ and $q\in\mathbb C$, $|q|=1$. Construct $\mathcal H=\mathbb C^n\oplus\mathbb C^m$, $n$, $m\ge 2$ and define $T\colon\mathcal H^{\otimes 2}\rightarrow\mathcal H^{\otimes 2}$ as follows
\begin{align*}
T u_1\otimes u_2 &= k\, u_2\otimes u_1,\quad \mbox{if either}\ u_1,u_2\in\mathbb C^n\ \mbox{or}\ u_1,u_2\in\mathbb C^m\\
T u\otimes v &= q\, v\otimes u,\quad \mbox{if}\ u\in\mathbb C^n,\ v\in\mathbb C^m.
\end{align*}
Denote the corresponding Wick algebra by $WE_{n,m}^{q,k}$ and its universal $C^*$-algebra by $\mathcal E_{n,m}^{q,k}$. This $C^*$-algebra is generated by $s_j$, $t_r$, $j=\overline{1,n}$, $r=\overline{1,m}$, subject to the relations
\begin{align}\label{d_plectons}
s_i^*s_j&=\delta_{ij}\mathbf 1 + k\, s_j s_i^*,\nonumber\\
t_r^* t_l&=\delta_{rl}\mathbf 1+ k\, t_l t_r^*,\\
s_j^* t_r &= q\, t_r s_j^*,\ t_r s_j= q\, s_j t_r\nonumber.
\end{align}
Relations (\ref{d_plectons}) can be regarded as an example of system considered in \cite{BoLyt} in the case of finite degrees of freedom. Applying the general stability result, we get that $\mathcal E_{n,m}^{q,k}\simeq\mathcal E_{n,m}^q$ for $k<\sqrt{2}-1$.

Notice that for $k=\pm 1$ we get a discrete analogue of commutation relations for generalized statistics introduced in \cite{LiM}.

\section{The case $|q|<1$}
We start with some lemmas. Let $\Lambda_n$ denote the set of all words in alphabet $\{\overline{1,n}\}$. For any non-empty $\mu=(\mu_1,\ldots,\mu_k)$, and a family of elements $b_1$, \dots, $b_n$, we denote by $b_{\mu}$ the product $b_{\mu_1}\cdots b_{\mu_k}$; we also put $b_{\emptyset}=\mathbf{1}$. In this section we assume that any word $\mu$ belongs to $\Lambda_n$.
\begin{lemma}
Let $Q=\sum_{i=1}^n s_i s_i^*$, then
\[
\sum_{|\mu|=k} s_{\mu}Q s_{\mu}^* =\sum_{|\nu|=k+1} s_{\nu}s_{\nu}^*.
\]
\end{lemma}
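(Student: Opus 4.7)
The plan is essentially a one-line symbolic manipulation, so I will describe it briefly. First I would expand $Q$ inside the outer sum to write
\[
\sum_{|\mu|=k} s_\mu Q s_\mu^* = \sum_{|\mu|=k}\sum_{i=1}^n s_\mu s_i s_i^* s_\mu^*.
\]
Next I would use the concatenation property of the multi-index notation: for $\mu=(\mu_1,\dots,\mu_k)$ and $i\in\{1,\dots,n\}$, writing $\mu i:=(\mu_1,\dots,\mu_k,i)$ we have $s_\mu s_i = s_{\mu i}$ and therefore $s_\mu s_i s_i^* s_\mu^* = s_{\mu i} s_{\mu i}^*$ by the definition in the excerpt.

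Finally I would observe that the map $(\mu,i)\mapsto \mu i$ is a bijection between pairs in $\Lambda_n^{(k)}\times\{1,\dots,n\}$ and words in $\Lambda_n^{(k+1)}$, so re-indexing gives
\[
\sum_{|\mu|=k}\sum_{i=1}^n s_{\mu i} s_{\mu i}^* = \sum_{|\nu|=k+1} s_\nu s_\nu^*,
\]
which is the claimed identity. No deformation relation or any property beyond the notational conventions is used; in particular the statement does not depend on $q$ and the relations $s_i^* s_j=\delta_{ij}\mathbf 1$ are not even invoked. There is no real obstacle here — the only thing to be careful about is checking the bijectivity of the concatenation map, which is immediate from unique decomposition of words.
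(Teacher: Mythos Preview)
Your argument is correct and is exactly the direct computation the paper has in mind; the paper's own proof consists of the single word ``Straightforward.'' There is nothing to add.
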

\begin{proof}
Straightforward.
\end{proof}

\begin{lemma}
For any $x\in\mathcal{E}_{n,m}^q$ one has
\[
\Bigl\|\sum_{|\mu|=k}s_{\mu}x s_{\mu}^*\Bigr\| \le \|x\|.
\]
\end{lemma}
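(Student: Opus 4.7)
The plan is to reduce the inequality to a pointwise Hilbert space calculation, exploiting the fact that the generators $s_1, \ldots, s_n$ form a row of isometries with pairwise orthogonal ranges (this is encoded in $s_i^* s_j = \delta_{ij}\mathbf{1}$). I would fix a faithful $*$-representation of $\mathcal{E}_{n,m}^q$ on a Hilbert space $\mathcal{H}$ (the existence of the universal $C^*$-envelope is already used elsewhere in the paper).

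First I would establish two elementary facts. (a) Each $s_\mu$ with $|\mu|=k$ is itself an isometry: by a short induction on $k$, $s_\mu^* s_\mu = s_{\mu_k}^* \cdots s_{\mu_1}^* s_{\mu_1} \cdots s_{\mu_k} = \mathbf{1}$. (b) The projections $\{s_\mu s_\mu^*\}_{|\mu|=k}$ are pairwise orthogonal, i.e. $s_\mu^* s_\nu = \delta_{\mu\nu}\mathbf{1}$ for $|\mu|=|\nu|=k$; again by induction, if $\mu \ne \nu$ then at some first position $\mu_j \ne \nu_j$, and the $s_{\mu_j}^* s_{\nu_j}=0$ factor collapses the product. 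Consequently $P_k := \sum_{|\mu|=k} s_\mu s_\mu^*$ is a genuine projection, and in particular $P_k \le \mathbf{1}$. (The previous lemma says $P_k = \sum_{|\mu|=k-1} s_\mu Q s_\mu^*$, but what matters here is only that $P_k$ is a projection.)

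Now for any $\xi \in \mathcal{H}$, the vectors $s_\mu x s_\mu^* \xi$ lie in the pairwise orthogonal subspaces $s_\mu \mathcal{H}$, so
\[
\Bigl\|\sum_{|\mu|=k} s_\mu x s_\mu^* \xi\Bigr\|^2 = \sum_{|\mu|=k} \|s_\mu x s_\mu^* \xi\|^2 = \sum_{|\mu|=k} \|x s_\mu^* \xi\|^2 \le \|x\|^2 \sum_{|\mu|=k}\|s_\mu^*\xi\|^2,
\]
where the second equality uses that $s_\mu$ is an isometry. The last sum equals $\langle P_k \xi,\xi\rangle \le \|\xi\|^2$, so taking the supremum over unit vectors $\xi$ yields the claimed norm bound.

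There is essentially no hard step here: the only thing to be careful about is that orthogonality of ranges is a consequence of the defining relation $s_i^* s_j = 0$ ($i\ne j$) of $\mathcal{E}_{n,m}^q$ and does not depend on $q$ or on the $t_r$ at all. The $q$-commutation between the two sets of generators plays no role in the estimate; the argument is the standard one for a row of isometries with orthogonal ranges, applied here to the row $(s_\mu)_{|\mu|=k}$.
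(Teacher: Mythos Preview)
Your argument is correct and rests on the same key observation as the paper's proof, namely that $s_\mu^* s_\nu = \delta_{\mu\nu}\mathbf{1}$ for $|\mu|=|\nu|=k$, so that the range projections $s_\mu s_\mu^*$ are pairwise orthogonal. The organization differs slightly: the paper stays inside the $C^*$-algebra, first treating positive $x$ via the operator inequality $0\le s_\mu x s_\mu^* \le \|x\|\, s_\mu s_\mu^*$ and then reducing the general case to the positive one by computing $A^*A=\sum_{|\mu|=k} s_\mu x^*x\, s_\mu^*$; you instead pass to a faithful Hilbert space representation and compute $\|A\xi\|^2$ directly by Pythagoras. Your route avoids the two-step reduction but at the cost of invoking a representation; the paper's route is representation-free. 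Both are short and essentially equivalent --- your Pythagoras identity is exactly the spatial version of the paper's formula for $A^*A$.
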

\begin{proof}
 $1$. First prove the claim for positive $x$. In this case one has $0\le x\le \|x\|\mathbf{1}$. Hence $0\le s_{\mu} x s_{\mu}^* \le \|x\| s_{\mu}s_{\mu}^*$, and
\[
\Bigl\| \sum_{|\mu|=k}s_{\mu}x s_{\mu}^*\Bigr\|\le \|x\|\cdot \Bigl\|\sum_{|\mu|=k} s_{\mu}s_{\mu}^*\Bigr\|.
\]
Note that $s_{\mu}^* s_{\lambda} =\delta_{\mu\,\lambda}$, $\mu,\lambda\in\Lambda_n$, $|\mu|=|\lambda|=k$, implying that $\{s_{\mu}s_{\mu}^*\ |\ |\mu|=k\}$ form a family of pairwise orthogonal projections. Hence $\|\sum_{|\mu|=k} s_{\mu}s_{\mu}^*\|=1$, and the statement for positive $x$ is proved.

 $2$. For any $x\in\mathcal{E}_{n,m}^q$. write $A=\sum_{|\mu|=k}s_{\mu} x s_{\mu}^*$, then $A^*=\sum_{|\mu|=k}s_{\mu} x s_{\mu}^*$ and
\[
A^* A=\sum_{|\mu|=k}s_{\mu} x^* x s_{\mu}^*.
\]
Then by the proved above,
\[
\|A\|^2 =\|A^*A\|\le \|x^*x\|=\|x\|^2. \qedhere
\]
\end{proof}

Construct $\widetilde{t}_l=(\mathbf{1}-Q) t_l$,  $l=\overline{1,m}$.

\begin{lemma}
The following commutation relations hold
\begin{align*}
&s_i^*\widetilde{t}_l=0,\quad i=\overline{1,n},\quad l=\overline{1,m},
\\
&\widetilde{t}_r^*\widetilde{t}_l=0,\quad l\ne r\quad l,r=\overline{1,m},
\\
& \widetilde{t}_r^*\widetilde{t}_r=\mathbf{1}-|q|^2 Q>0,\quad r=\overline{1,m}.
\end{align*}
\end{lemma}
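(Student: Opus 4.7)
The plan is to treat the three identities as consequences of two observations: first, that $Q=\sum_i s_is_i^*$ is a projection (since $s_i^*s_j=\delta_{ij}\mathbf{1}$ gives $Q^2=Q$ and $Q^*=Q$); second, that the commutation $s_j^*t_r=qt_rs_j^*$ and its adjoint $t_r^*s_j=\bar q s_j t_r^*$ let one move $t_r^*$ across any factor $s_j s_j^*$ at the cost of $|q|^2$.

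For the first identity I would expand $s_i^*\widetilde{t}_l = s_i^* t_l - s_i^*Q t_l$. Because $s_i^*s_j=\delta_{ij}\mathbf{1}$, we have $s_i^*Q = s_i^*$, hence $s_i^*Qt_l = s_i^*t_l$ and the two terms cancel. For the remaining identities I would use that $Q$ is a projection to rewrite
\[
\widetilde{t}_r^*\widetilde{t}_l = t_r^*(\mathbf{1}-Q)^2 t_l = t_r^*(\mathbf{1}-Q)t_l = t_r^*t_l - t_r^*Qt_l.
\]
The task then reduces to evaluating $t_r^*Qt_l = \sum_j t_r^* s_j s_j^* t_l$. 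Applying $t_r^*s_j=\bar q s_j t_r^*$ and then $s_j^*t_l = q t_l s_j^*$ (equivalently, applying $s_js_j^* t_l = q s_j t_l s_j^*$ first and then $t_r^*s_j = \bar q s_j t_r^*$) yields the key identity
\[
t_r^* s_j s_j^* t_l = |q|^2\, s_j\, t_r^* t_l\, s_j^*.
\]
Summing over $j$ gives $t_r^*Qt_l = |q|^2 \sum_j s_j\,t_r^* t_l\,s_j^*$.

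Finally I would plug in the orthogonality relation $t_r^* t_l = \delta_{rl}\mathbf{1}$. For $l\neq r$ the sum vanishes and we get $\widetilde{t}_r^*\widetilde{t}_l=0$; for $l=r$ the sum becomes $|q|^2 Q$, so $\widetilde{t}_r^*\widetilde{t}_r = \mathbf{1}-|q|^2 Q$. Positivity of the latter is immediate from $|q|<1$ and $0\le Q\le\mathbf{1}$, since then $\mathbf{1}-|q|^2 Q\ge (1-|q|^2)\mathbf{1}>0$.

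There is no real obstacle here; the entire argument is a bookkeeping exercise exploiting that $Q$ is a projection and that $t_r^*$ commutes with $s_js_j^*$ up to the scalar $|q|^2$. The only thing to be careful about is to move $t_r^*$ all the way across $s_js_j^*$ without needing any relation on $s_jt_r$ directly — this is why one combines $s_js_j^*t_l=qs_jt_ls_j^*$ with $t_r^*s_j=\bar q s_jt_r^*$ rather than trying to commute $t_r^*$ with $s_j^*$ separately.
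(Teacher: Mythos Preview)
Your proof is correct and follows essentially the same route as the paper: both reduce to $s_i^*(\mathbf{1}-Q)=0$ for the first identity and to the computation $t_r^*s_js_j^*t_l=|q|^2 s_j\,t_r^*t_l\,s_j^*$ for the second and third. You are slightly more explicit than the paper in noting that $Q$ is a projection (so $(\mathbf{1}-Q)^2=\mathbf{1}-Q$) and in justifying the strict positivity $\mathbf{1}-|q|^2Q\ge(1-|q|^2)\mathbf{1}>0$, which the paper simply asserts.
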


\begin{proof} We have $s_i^* (\mathbf{1}-Q)=0$, implying that $s_i^*\widetilde{t}_l=0$ for any $i=\overline{1,n}$, and $l=\overline{1,m}$.

Further,
\begin{align*}
\widetilde{t}_r^*\widetilde{t}_l&=t_r^* (\mathbf{1}-Q)t_l=t_r^* t_l-
\sum_{i=1}^n t_r^* s_i s_i^* t_l=\delta_{rl}\mathbf{1}-\sum_{i=1}^n |q|^2 s_i t_r^* t_l s_i^*=\\
&=\delta_{rl}(\mathbf{1}-|q|^2 Q).\qedhere
\end{align*}
\end{proof}

\begin{proposition}\label{gener1}
For any $r=\overline{1,m}$, one has
\[
t_r=\sum_{k=0}^{\infty}\sum_{|\mu|=k} q^k s_{\mu}\widetilde{t}_r s_{\mu}^*.
\]
In particular, the family $\{s_i,\widetilde{t}_r,\ i=\overline{1,n},\ r=\overline{1,m}\}$ generates $\mathcal{E}_{n,m}^q$.
\end{proposition}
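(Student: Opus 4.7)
The plan is to derive the series by iterating a single-step identity and control the tail with Lemma 2.

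First I would start from the definition $\widetilde{t}_r=(\mathbf{1}-Q)t_r$, which gives $t_r=\widetilde{t}_r+Q t_r=\widetilde{t}_r+\sum_{i=1}^n s_i s_i^* t_r$. Using the commutation relation $s_i^* t_r=q t_r s_i^*$ from (\ref{baseqrel}), this rewrites as
\[
t_r=\widetilde{t}_r+q\sum_{i=1}^n s_i t_r s_i^*.
\]
This is the key one-step identity: it expresses $t_r$ as $\widetilde{t}_r$ plus a $q$-twisted ``compression.'' I would then iterate this identity by substituting the same formula for the inner $t_r$, and a straightforward induction on $N$ yields
\[
t_r=\sum_{k=0}^{N}q^k\sum_{|\mu|=k}s_{\mu}\widetilde{t}_r s_{\mu}^{*}
+q^{N+1}\sum_{|\mu|=N+1}s_{\mu} t_r s_{\mu}^{*}.
\]

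Next I would bound the remainder. By Lemma 2 applied to $x=t_r$,
\[
\Bigl\|q^{N+1}\sum_{|\mu|=N+1}s_{\mu}t_r s_{\mu}^{*}\Bigr\|
\le|q|^{N+1}\|t_r\|=|q|^{N+1},
\]
since $t_r$ is an isometry. Because $|q|<1$, the remainder tends to $0$ and the partial sums converge in norm to $t_r$, giving the claimed series. A small check is that the series is absolutely convergent: using Lemma 2 again on $x=\widetilde{t}_r$ (which is bounded since $\widetilde{t}_r^{*}\widetilde{t}_r=\mathbf{1}-|q|^2 Q$ by the preceding lemma), each summand has norm at most $|q|^k\|\widetilde{t}_r\|$, so the geometric series dominates.

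For the ``in particular'' claim, let $\mathcal{B}$ denote the $C^*$-subalgebra of $\mathcal{E}_{n,m}^q$ generated by $\{s_i,\widetilde{t}_r\}$. Since every partial sum lies in $\mathcal{B}$ and $\mathcal{B}$ is norm-closed, the formula just proved shows $t_r\in\mathcal{B}$ for every $r$. Together with the $s_i$'s, the $t_r$'s generate $\mathcal{E}_{n,m}^q$, hence $\mathcal{B}=\mathcal{E}_{n,m}^q$. The main (and only) obstacle is the norm control of the tail, which is exactly what Lemma 2 was set up to provide; the rest is an algebraic iteration.
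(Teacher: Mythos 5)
Your proof is correct and follows essentially the same route as the paper: both arguments reduce to the identity $\sum_{k=0}^{N}q^k\sum_{|\mu|=k}s_{\mu}\widetilde{t}_r s_{\mu}^{*}=t_r-q^{N+1}\sum_{|\mu|=N+1}s_{\mu}t_r s_{\mu}^{*}$ (the paper obtains it by telescoping, you by iterating the one-step identity, which is the same computation) and then kill the tail with Lemma 2 and $|q|<1$. Your extra remarks on absolute convergence and the closedness of the generated $C^*$-subalgebra are fine but not needed beyond what the paper leaves implicit.
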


\begin{proof}
Put $M_k^{r}=\sum_{|\mu|=k} q^k s_{\mu}\widetilde{t}_r s_{\mu}^*$, $k\in\mathbb{Z}_{+}$. Then
\[
M_0^r=t_r- Q t_r=t_r-\sum_{|\mu|=1} s_{\mu}s_{\mu}^* t_r,
\]
and
\begin{align*}
M_k^r&=\sum_{|\mu|=k} q^k s_{\mu} (\mathbf{1}-Q) t_r s_{\mu}^*=
\sum_{|\mu|=k}  s_{\mu} (\mathbf{1}-Q) s_{\mu}^*t_r =\\
&=\sum_{|\mu|=k}  s_{\mu} s_{\mu}^* t_r - \sum_{|\mu|=k+1} s_{\mu} s_{\mu}^* t_r.
\end{align*}
Then
\[
S_N^r=\sum_{k=0}^{N} M_k^r=t_r-\sum_{|\mu|=N+1}s_{\mu}s_{\mu}^* t_r=
t_r- q^{N+1}\sum_{|\mu|=N+1}s_{\mu}t_rs_{\mu}^*.
\]
Since $\|\sum_{|\mu|=N+1}s_{\mu}t_rs_{\mu}^*\|\le \|t_r\|=1$ one has that $S_N^r\rightarrow t_r$ in $\mathcal{E}_{n,m}^q$ as $N\to\infty$.
\end{proof}

Suppose that $\mathcal{E}_{n,m}^q$ is realised by Hilbert space operators. Consider the left polar decomposition $\widetilde{t}_r =\widehat{t}_r\cdot c_r$, where $c_r^2=\widetilde{t}_r^*\widetilde{t}_r=\mathbf{1}-|q|^2 Q>0$, implying that $\widehat{t}_r$ is an isometry and
\[
\widehat{t}_r=\widetilde{t}_r c_r^{-1}\in\mathcal{E}_{n,m}^q,\quad r=\overline{1,m}.
\]

\begin{lemma}
The following commutation relations hold
\begin{align*}
s_i^*\widehat{t}_r&=0,\quad i=\overline{1,n},\ r=\overline{1,m},
\\ \widehat{t}_r^*\widehat{t}_l&=\delta_{rl}\mathbf{1},\quad r,l=\overline{1,m}.
\end{align*}
\end{lemma}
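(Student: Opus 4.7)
The plan is to observe that both equalities reduce to direct computations from the previous lemma once we substitute the definition $\widehat{t}_r = \widetilde{t}_r c_r^{-1}$. Note first that all $c_r$ coincide with the single positive invertible operator $c := (\mathbf{1} - |q|^2 Q)^{1/2}$, since $\widetilde{t}_r^*\widetilde{t}_r = \mathbf{1} - |q|^2 Q$ is independent of $r$; in particular $c^{-1} \in \mathcal{E}_{n,m}^q$ by functional calculus (positivity of $\mathbf{1} - |q|^2 Q$ follows from $|q|\le 1$ and $0 \le Q \le \mathbf{1}$).

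For the first identity, I would simply write
\[
s_i^* \widehat{t}_r = s_i^* \widetilde{t}_r c_r^{-1} = 0,
\]
using $s_i^* \widetilde{t}_r = 0$ from the preceding lemma.

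For the second identity, I would expand
\[
\widehat{t}_r^* \widehat{t}_l = c_r^{-1} \widetilde{t}_r^* \widetilde{t}_l c_l^{-1} = \delta_{rl}\, c_r^{-1} (\mathbf{1} - |q|^2 Q) c_r^{-1} = \delta_{rl}\, c_r^{-1} c_r^2 c_r^{-1} = \delta_{rl}\mathbf{1},
\]
again invoking the previous lemma for the middle equality and the definition $c_r^2 = \mathbf{1} - |q|^2 Q$.

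There is no genuine obstacle here: the substance of the argument was packaged into the preceding lemma and into the observation that $\mathbf{1} - |q|^2 Q$ is strictly positive, so the polar decomposition yields honest isometries rather than merely partial isometries. The only point worth emphasizing is that because $c_r$ lies in $\mathcal{E}_{n,m}^q$ and is invertible there, $\widehat{t}_r \in \mathcal{E}_{n,m}^q$ as claimed, so the computations take place inside the algebra rather than in an ambient representation.
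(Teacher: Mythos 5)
Your argument is correct and is essentially the paper's proof: substitute $\widehat{t}_r=\widetilde{t}_r c_r^{-1}$ and use the relations $s_i^*\widetilde{t}_r=0$ and $\widetilde{t}_r^*\widetilde{t}_l=\delta_{rl}(\mathbf{1}-|q|^2Q)=\delta_{rl}c_r^2$ from the preceding lemma (the paper treats the case $r=l$ via the polar decomposition remark just before the lemma, while you compute it directly, which amounts to the same thing). One tiny imprecision: invertibility of $c_r=(\mathbf{1}-|q|^2Q)^{1/2}$ needs the strict inequality $|q|<1$ (so that $\mathbf{1}-|q|^2Q\ge(1-|q|^2)\mathbf{1}>0$), which is the standing assumption of this section, not merely $|q|\le 1$ as your parenthetical suggests.
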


\begin{proof}
Indeed, for any $i=\overline{1,n}$, and $r=\overline{1,m}$. one has
\[
s_i^* \widehat{t}_r=s_i^*\widetilde{t}_r\, c_r^{-1}=0,
\]
and
\[
\widehat{t}_r^*\widehat{t}_l=c_r^{-1}\widetilde{t}_r^*\widetilde{t}_l c_r^{-1}=0,\quad r\ne l.
\qedhere
\]
\end{proof}

Summing up the results stated above, we get the following

\begin{theorem}\label{zer_q_gen}
Let  $\widehat{t}_r=(\mathbf{1}-Q)t_r (\mathbf{1}-|q|^2 Q)^{-\frac{1}{2}}$, $r=\overline{1,m}$. Then the family $\{s_i,\widehat{t}_r\}_{i=1}^{n}{}_{r={1}}^{m}$ generates $\mathcal{E}_{n,m}^q$, and
\[
s_i^*s_j=\delta_{ij}\mathbf{1},\quad t_r^* t_l=\delta_{rl}\mathbf{1},\quad s_i^* t_r =0,\qquad i,j=\overline{1,n},\ r,l=\overline{1,m}.
\]
\end{theorem}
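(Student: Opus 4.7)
The theorem gathers together all the preparatory lemmas, so the proof should be a short assembly job with two components: verifying the listed relations, and verifying that $\{s_i,\widehat t_r\}$ generates the whole $C^*$-algebra $\mathcal E_{n,m}^q$.

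For the relations, I read the assertion as $s_i^*s_j=\delta_{ij}\mathbf 1$, $\widehat t_r^*\widehat t_l=\delta_{rl}\mathbf 1$, and $s_i^*\widehat t_r=0$. The first is one of the original defining relations of $\mathcal E_{n,m}^q$, while the other two are exactly what the lemma immediately preceding the theorem has just established (using that $c_r$ is invertible and the previous lemma giving $\widetilde t_r^*\widetilde t_l=\delta_{rl}(\mathbf 1-|q|^2Q)=\delta_{rl}c_r^2$). So nothing new has to be checked here.

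The substantive content is generation. Let $\mathcal A\subseteq\mathcal E_{n,m}^q$ be the $C^*$-subalgebra generated by $\{s_i,\widehat t_r:i=\overline{1,n},\ r=\overline{1,m}\}$. The plan is to recover each $t_r$ inside $\mathcal A$ via the following chain. First, $Q=\sum_{i=1}^n s_is_i^*\in\mathcal A$ by construction. Second, since $\|Q\|\le 1$ and $|q|<1$, the element $\mathbf 1-|q|^2Q$ is strictly positive, so $c_r=(\mathbf 1-|q|^2Q)^{1/2}$ lies in $\mathcal A$ by continuous functional calculus applied to $Q$. Third, $\widetilde t_r=\widehat t_r\cdot c_r\in\mathcal A$ by the definition of $\widehat t_r$ via left polar decomposition. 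Fourth, Proposition~\ref{gener1} gives the norm-convergent expansion
\[
t_r=\sum_{k=0}^{\infty}\sum_{|\mu|=k} q^k\, s_{\mu}\widetilde t_r s_{\mu}^*,
\]
each term of which lies in $\mathcal A$, so $t_r\in\mathcal A$. Since the $s_i$ and $t_r$ generate $\mathcal E_{n,m}^q$, we conclude $\mathcal A=\mathcal E_{n,m}^q$.

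I do not foresee any real obstacle: the convergence of the defining series for $t_r$ has been handled in Proposition~\ref{gener1} via the contraction estimate from the earlier lemma $\|\sum_{|\mu|=k}s_\mu x s_\mu^*\|\le\|x\|$, and the invertibility of $c_r$ (needed both to define $\widehat t_r$ and to recover $\widetilde t_r=\widehat t_r c_r$ from $\widehat t_r$) is automatic because $|q|<1$ forces $\mathbf 1-|q|^2Q\ge(1-|q|^2)\mathbf 1$. The only slightly subtle point is to note that $\mathcal A$ contains $c_r$ even though $c_r$ is defined via a square root rather than a polynomial in the generators, but this is precisely what continuous functional calculus on the positive element $Q\in\mathcal A$ provides.
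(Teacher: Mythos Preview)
Your proposal is correct and follows essentially the same route as the paper's own proof, which is equally terse: it simply notes that $\widetilde t_r=\widehat t_r(\mathbf 1-|q|^2 Q)^{1/2}\in C^*(\widehat t_r,Q)$ and then invokes Proposition~\ref{gener1} to conclude generation. Your write-up spells out the functional-calculus step and the chain $Q\in\mathcal A\Rightarrow c_r\in\mathcal A\Rightarrow\widetilde t_r\in\mathcal A\Rightarrow t_r\in\mathcal A$ more explicitly, and correctly reads the displayed relations as involving $\widehat t_r$ rather than $t_r$ (the paper's statement has a typographical slip there), but the argument is the same.
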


\begin{proof}
It remains to note that $\widetilde{t}_r=\widehat{t}_r (1-|q|^2 Q)^{\frac{1}{2}}$, so $\widetilde{t}_r\in C^* (\widehat{t}_r,\, Q)$, so by Proposition \ref{gener1} the elements $s_i$, $\widehat{t}_r$, $i=\overline{1,n}$, $r=\overline{1,m}$, generate $\mathcal{E}_{n,m}^q$.
\end{proof}

\begin{corollary}
Denote by $v_i$, $i=\overline{1,n}+m$, the isometries generating $\mathcal{E}_{n,m}^0=\mathcal{O}_{n+m}^0$. Then Theorem \ref{zer_q_gen} implies that the correspondence
\[
v_i\mapsto s_i,\ i=\overline{1,n},\quad v_{n+r}\mapsto \widehat{t}_r,\ r=\overline{1,m},
\]
extends uniquely to a surjective homomorphism $\varphi\colon\mathcal{E}_{n,m}^0\rightarrow\mathcal{E}_{n,m}^q$.
\end{corollary}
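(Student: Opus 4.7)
The plan is to invoke the universal property of $\mathcal{E}_{n,m}^0 = \mathcal{O}_{n+m}^{(0)}$. Recall that the Cuntz-Toeplitz algebra on $n+m$ generators is by definition the universal $C^*$-algebra generated by isometries $v_1,\dots,v_{n+m}$ with pairwise orthogonal ranges, i.e.\ satisfying $v_i^* v_j = \delta_{ij}\mathbf{1}$ for $i,j = \overline{1, n+m}$. So to produce the homomorphism $\varphi$, it suffices to exhibit in $\mathcal{E}_{n,m}^q$ a family of $n+m$ elements satisfying exactly these relations, and then appeal to universality.

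First I would verify that $\widehat{t}_r$ is genuinely an element of $\mathcal{E}_{n,m}^q$: since $|q|<1$ and $0 \le Q \le \mathbf{1}$, the element $\mathbf{1} - |q|^2 Q$ is strictly positive in $\mathcal{E}_{n,m}^q$, so $(\mathbf{1} - |q|^2 Q)^{-1/2}$ lies in the $C^*$-algebra by continuous functional calculus and $\widehat{t}_r = (\mathbf{1}-Q)t_r(\mathbf{1}-|q|^2 Q)^{-1/2}\in\mathcal{E}_{n,m}^q$. The necessary relations
\[
s_i^* s_j = \delta_{ij}\mathbf{1}, \qquad \widehat{t}_r^*\widehat{t}_l = \delta_{rl}\mathbf{1}, \qquad s_i^*\widehat{t}_r = 0
\]
have been established in Theorem \ref{zer_q_gen} (the last one in the lemma immediately preceding it). Thus the assignment $v_i \mapsto s_i$, $v_{n+r} \mapsto \widehat{t}_r$ preserves the defining relations of $\mathcal{O}_{n+m}^{(0)}$, and the universal property yields a unique $*$-homomorphism $\varphi \colon \mathcal{E}_{n,m}^0 \to \mathcal{E}_{n,m}^q$ extending this assignment.

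Finally, surjectivity and uniqueness are essentially free. Uniqueness of $\varphi$ is forced because any two $*$-homomorphisms out of $\mathcal{E}_{n,m}^0$ agreeing on the generators $v_i$ coincide on the dense $*$-subalgebra they generate. Surjectivity follows from the fact, recorded in Theorem \ref{zer_q_gen} (via Proposition \ref{gener1}, which shows that $\{s_i, \widetilde{t}_r\}$ generate $\mathcal{E}_{n,m}^q$, together with $\widetilde{t}_r = \widehat{t}_r(\mathbf{1} - |q|^2 Q)^{1/2} \in C^*(s_i,\widehat{t}_r)$), that the image of $\varphi$ contains a generating set of $\mathcal{E}_{n,m}^q$. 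There is no real obstacle here — the entire point of the preceding sequence of lemmas and Theorem \ref{zer_q_gen} was precisely to set up this corollary; the genuine work (proving that $\varphi$ is in fact an isomorphism, i.e.\ injective) is a separate matter deferred to subsequent results.
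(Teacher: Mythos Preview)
Your proposal is correct and matches the paper's intended reasoning: the paper does not write out a separate proof for this corollary, treating it as an immediate consequence of Theorem~\ref{zer_q_gen} via the universal property of $\mathcal{O}_{n+m}^{(0)}$, which is precisely the argument you give. Your added remarks on why $\widehat{t}_r \in \mathcal{E}_{n,m}^q$ and how surjectivity follows from Proposition~\ref{gener1} are the natural details one would supply if asked to expand the corollary.
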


Our next aim is to construct the inverse homomorphism $\psi\colon\mathcal{E}_{n,m}^q\rightarrow\mathcal{E}_{n,m}^0$. To do it, put
\[
\widetilde{Q}=\sum_{i=1}^n v_i v_i^*\quad \widetilde{w}_r=v_{n+r}(1-|q|^2\widetilde{Q})^{\frac{1}{2}}, \qquad r=\overline{1,m}.
\]
 Then $\widetilde{w}_r^*\widetilde{w}_r=1-|q|^2\widetilde{Q}$, and $\widetilde{w}_r^*\widetilde{w}_l=0$ if $r\ne l$, $r,l=\overline{1,m}$. Construct
\[
w_r=\sum_{k=0}^{\infty}\sum_{|\mu|=k}q^k v_{\mu}\widetilde{w}_r v_{\mu}^*,\quad r=\overline{1,m},
\]
where $\mu$ runs over $\Lambda_n$, and set as above $v_{\mu}=v_{\mu_1}\cdots v_{\mu_k}$. Note that the series above converges with respect to norm in $\mathcal{E}_{n,m}^0$.

\begin{lemma}
The following commutation relations hold
\[
w_r^*w_l=\delta_{rl}\mathbf{1},\quad v_i^* w_r=q w_r v_i^*,\quad i=\overline{1,n},\ r,l=\overline{1,m}.
\]
\end{lemma}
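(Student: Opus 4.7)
The plan is to verify both identities by direct manipulation of the defining series $w_r = \sum_{k \geq 0} q^k \sum_{|\mu|=k} v_\mu \widetilde w_r v_\mu^*$. Convergence is not an obstacle: the analogue of Lemma~2 applied to the $v_i$'s (its proof uses only the isometry relations $v_i^* v_j = \delta_{ij}$, which hold in $\mathcal{E}_{n,m}^0$) gives $\bigl\|\sum_{|\mu|=k} v_\mu \widetilde w_r v_\mu^*\bigr\| \le \|\widetilde w_r\| \le 1$, and $|q| < 1$ guarantees absolute convergence, so all rearrangements and term-wise multiplications are justified.

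For the intertwining relation $v_i^* w_r = q w_r v_i^*$, I would pull $v_i^*$ inside the sum. The $k=0$ term vanishes because $v_i^* \widetilde w_r = v_i^* v_{n+r}(1-|q|^2 \widetilde Q)^{1/2} = 0$ (distinct Cuntz--Toeplitz generators). For $k \ge 1$ and $\mu = (\mu_1,\ldots,\mu_k)\in\Lambda_n$, the Cuntz--Toeplitz relations yield $v_i^* v_\mu = \delta_{i\mu_1}\, v_{\mu'}$ with $\mu' = (\mu_2,\ldots,\mu_k)$, while simultaneously $v_\mu^* = v_{\mu'}^* v_i^*$ whenever $\mu_1 = i$. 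Summing only over those $\mu$ with $\mu_1 = i$ and reindexing by $\nu = \mu'$, one factor of $q$ and the trailing $v_i^*$ come out on the right, yielding exactly $q w_r v_i^*$.

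For the orthogonality $w_r^* w_l = \delta_{rl}\mathbf 1$, I would expand the product of the two series,
\[
w_r^* w_l = \sum_{k, j \geq 0} \overline q^{\,k} q^{\,j} \sum_{|\mu|=k,\, |\lambda|=j} v_\mu\, \widetilde w_r^*\,(v_\mu^* v_\lambda)\,\widetilde w_l\, v_\lambda^*,
\]
and show that only the diagonal $k=j$ contributes. Indeed, for $k<j$ either $v_\mu^* v_\lambda = 0$ (if $\mu$ is not a prefix of $\lambda$) or $v_\mu^* v_\lambda = v_{\lambda_{k+1}}\cdots v_{\lambda_j}$ is a non-empty word in $v_1,\ldots,v_n$, in which case $\widetilde w_r^*(v_\mu^* v_\lambda)\widetilde w_l$ contains the factor $v_{n+r}^* v_{\lambda_{k+1}} = 0$; the case $k>j$ is symmetric, producing a factor $v_{\mu_{j+1}}^* v_{n+l} = 0$. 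On the diagonal $v_\mu^* v_\lambda = \delta_{\mu\lambda}$, leaving
\[
w_r^* w_l = \sum_{k \geq 0} |q|^{2k} \sum_{|\mu|=k} v_\mu\, (\widetilde w_r^* \widetilde w_l)\, v_\mu^*.
\]
If $r\ne l$ the inner factor is zero. For $r=l$, substitute $\widetilde w_r^* \widetilde w_r = \mathbf 1 - |q|^2 \widetilde Q$, set $P_k = \sum_{|\mu|=k} v_\mu v_\mu^*$, and invoke the analogue of Lemma~1 to identify $\sum_{|\mu|=k} v_\mu \widetilde Q v_\mu^* = P_{k+1}$. The expression collapses to the telescoping series $\sum_{k\ge 0} |q|^{2k}(P_k - |q|^2 P_{k+1}) = P_0 = \mathbf 1$.

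The main technical point is the vanishing of the cross-length terms; once it is recognized that each such term must contain a product of the form $v_{n+r}^* v_i$ or $v_i^* v_{n+l}$ with $i\le n$, the rest is bookkeeping and a standard telescoping argument.
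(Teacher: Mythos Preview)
Your proof is correct and follows essentially the same approach as the paper's own argument: both establish $v_i^*\widetilde w_r=0$ (hence the $k=0$ term vanishes and the index-shift yields $v_i^*w_r=qw_rv_i^*$), both expand the double series for $w_r^*w_l$, kill the off-diagonal terms via a factor $\widetilde w_r^*v_\beta$ or $v_\gamma^*\widetilde w_l$ with a nonempty word in $\Lambda_n$, and then telescope the diagonal using $\widetilde w_r^*\widetilde w_r=\mathbf 1-|q|^2\widetilde Q$ together with the identity $\sum_{|\mu|=k}v_\mu\widetilde Q v_\mu^*=\sum_{|\nu|=k+1}v_\nu v_\nu^*$. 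The only cosmetic difference is that the paper phrases the vanishing of cross terms via the prefix decomposition $\lambda=\widehat\lambda\gamma$ rather than naming the explicit factor $v_{n+r}^*v_{\lambda_{k+1}}$, but the content is identical.
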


\begin{proof}
First we note that $v_i^*\widetilde{w}_r=0$, and $\widetilde{w}_r^* v_i=0$ for any $i=\overline{1,n}$, and $j=\overline{1,m}$, implying that
\[
v_{\delta}^*\widetilde{w}_r=0,\quad \widetilde{w}_r^* v_{\delta}=0,\quad \mbox{for any nonempty}\ \delta\in\Lambda_n,\ r=\overline{1,m}.
\]
Let $|\lambda|\ne |\mu|$, $\lambda,\mu\in\Lambda_n$. If $|\lambda|>|\mu|$, then $\lambda=\widehat{\lambda}\gamma$ with $|\lambda|=|\mu|$ and
\[
v_{\lambda}^*v_{\mu}=\delta_{\widehat{\lambda}\mu}v_{\gamma}^*.
\]
Otherwise $\mu=\widehat{\mu}\beta$, $|\widehat{\mu}|=|\lambda|$ and
\[
v_{\lambda}^*v_{\mu}=\delta_{\lambda\widehat{\mu}}v_{\beta}.
\]
So, if $|\lambda|>|\mu|$ one has
\[
v_{\lambda}\widetilde{w}_r^* v_{\lambda}^* v_{\mu}\widetilde{w}_r v_{\mu}^*=
\delta_{\widehat{\lambda}\mu} v_{\lambda}\widetilde{w}_r^* v_{\gamma}^*\widetilde{w}_r v_{\mu}=0,
\]
and if $|\mu|>|\lambda|$, then
\[
v_{\lambda}\widetilde{w}_r^* v_{\lambda}^* v_{\mu}\widetilde{w}_r v_{\mu}^*=
\delta_{\lambda\widehat{\mu}} v_{\lambda}\widetilde{w}_r^* v_{\beta}\widetilde{w}_r v_{\mu}=0.
\]
Since $v_{\mu}^*v_{\lambda}=\delta_{\mu\lambda}\mathbf{1}$, if $|\mu|=|\lambda|$, one has
\begin{align*}
 w_r^*w_r&=\lim_{N\rightarrow\infty}\Bigl(\sum_{k=0}^N \sum_{|\lambda|=k}|q|^{k} v_{\lambda}\widetilde{w}_r^* v_{\lambda}^*\Bigr)\cdot
\Bigl(\sum_{l=0}^N \sum_{|\mu|=l}|q|^{l} v_{\mu}\widetilde{w}_r v_{\mu}^*\Bigr)
\\
&=\lim_{N\rightarrow\infty}\sum_{k,l=0}^N \sum_{|\lambda|=k,|\mu|=l}|q|^{k+l} v_{\lambda}\widetilde{w}_r^* v_{\lambda}^* v_{\mu}\widetilde{w}_r v_{\mu}^*=
\lim_{N\rightarrow\infty}\sum_{k=0}^N \sum_{|\lambda|,|\mu|=k,}|q|^{2k} v_{\lambda}\widetilde{w}_r^* v_{\lambda}^* v_{\mu}\widetilde{w}_r v_{\mu}^*
\\
&=\lim_{N\rightarrow\infty}\sum_{k=0}^N \sum_{|\mu|=k}|q|^{2k} v_{\mu}\widetilde{w}_r^*\widetilde{w}_r v_{\mu}^*=
\lim_{N\rightarrow\infty}\sum_{k=0}^N \sum_{|\mu|=k}|q|^{2k} v_{\mu}(\mathbf{1}-|q|^2\widetilde{Q}^2) v_{\mu}^*
\\
&=\lim_{N\rightarrow\infty}\sum_{k=0}^N \Bigl (\sum_{|\mu|=k}|q|^{2k} v_{\mu}v_{\mu}^*-\sum_{|\mu|=k+1} |q|^{2k+2}v_{\mu} v_{\mu}^* \Bigr)
\\
&=\lim_{N\rightarrow\infty}\Bigl(\mathbf{1}-|q|^{2N+2}\sum_{|\mu|=N+1} v_{\mu}v_{\mu}^*\Bigr)=\mathbf{1}.
\end{align*}
Since $\widetilde{w}_r^*\widetilde{w}_l=0$, $r\ne l$, the same arguments as above imply that $w_r^* w_l=0$, $r\ne l$.

For any non-empty $\mu\in\Lambda_n$ write $\sigma(\mu)=\emptyset$ if $|\mu|=1$, and
$\sigma(\mu)=(\mu_2,\ldots,\mu_k)$ if ${|\mu|=k>1}$. Further, for any $i=\overline{1,n}$, $r=\overline{1,m}$ one has
\begin{align*}
v_i^* w_r&=\sum_{k=0}^{\infty}\sum_{|\mu|=k} q^k s_i^* v_{\mu}\widetilde{w}_r v_{\mu}^*=
v_i^*\widetilde{w}_r+\sum_{k=1}^{\infty}\sum_{|\mu|=k} q^k \delta_{i\mu_1} v_{\sigma(\mu)}\widetilde{w}_r v_{\sigma(\mu)}^*v_i^*
\\
&=q\sum_{k=0}^{\infty}\sum_{|\mu|=k} q^k v_{\mu}\widetilde{w}_r v_{\mu}^*v_i^*=q w_r v_i^*.
\qedhere
\end{align*}
\end{proof}

\begin{lemma}
For any $r=\overline{1,m}$, one has $\widetilde{w}_r=(\mathbf{1}-\widetilde{Q})w_r$.
\end{lemma}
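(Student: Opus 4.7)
The plan is to expand $(\mathbf{1}-\widetilde{Q})w_r$ term-by-term using the series definition of $w_r$ and show that only the $k=0$ summand survives. Concretely, I would write
\[
(\mathbf{1}-\widetilde{Q})w_r = \sum_{k=0}^{\infty}\sum_{|\mu|=k} q^k (\mathbf{1}-\widetilde{Q})v_{\mu}\widetilde{w}_r v_{\mu}^*
\]
(justified by continuity of left multiplication by the bounded operator $\mathbf{1}-\widetilde{Q}$ on the norm-convergent series defining $w_r$) and handle the two cases $k\ge 1$ and $k=0$ separately.

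First I would handle $|\mu|\ge 1$. Since $\mu\in\Lambda_n$ forces $\mu_1\in\{1,\dots,n\}$, and since the generators satisfy $v_j^* v_i = \delta_{ij}\mathbf{1}$ for $1\le i,j\le n+m$, we have
\[
\widetilde{Q}v_{\mu} = \sum_{i=1}^{n} v_i v_i^* v_{\mu_1}v_{\sigma(\mu)} = v_{\mu_1}v_{\sigma(\mu)} = v_{\mu},
\]
so $(\mathbf{1}-\widetilde{Q})v_{\mu}=0$ and every summand with $k\ge 1$ is killed.

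Next I would treat the remaining $k=0$ term $(\mathbf{1}-\widetilde{Q})\widetilde{w}_r$. Because $\widetilde{w}_r=v_{n+r}(\mathbf{1}-|q|^2\widetilde{Q})^{1/2}$ and $v_i^* v_{n+r}=0$ for $1\le i\le n$, we get $v_i^*\widetilde{w}_r=0$ for all such $i$, hence $\widetilde{Q}\widetilde{w}_r=\sum_{i=1}^{n} v_i v_i^*\widetilde{w}_r=0$. Therefore $(\mathbf{1}-\widetilde{Q})\widetilde{w}_r=\widetilde{w}_r$, and combining the two cases gives $(\mathbf{1}-\widetilde{Q})w_r=\widetilde{w}_r$ as claimed.

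I expect no genuine obstacle here: the identity is a direct consequence of the orthogonality $v_j^* v_i=\delta_{ij}\mathbf{1}$ of the Cuntz-Toeplitz generators, coupled with the observation that $\mathbf{1}-\widetilde{Q}$ is the projection onto the component of the Fock space orthogonal to the ranges of $v_1,\dots,v_n$. The only small piece of bookkeeping is the interchange of $(\mathbf{1}-\widetilde{Q})$ with the infinite sum, which is automatic by continuity.
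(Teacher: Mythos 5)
Your proof is correct and follows essentially the same route as the paper: the summands with $|\mu|\ge 1$ vanish because $(\mathbf{1}-\widetilde{Q})v_{\mu}=0$, and the $k=0$ term equals $\widetilde{w}_r$ because $\widetilde{Q}v_{n+r}=0$ gives $\widetilde{Q}\widetilde{w}_r=0$. The continuity remark justifying termwise multiplication is a fine (if routine) addition; nothing is missing.
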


\begin{proof}
First note that $(\mathbf{1}-\widetilde{Q})v_i=0$, $i=\overline{1,n}$, implies that
\[
(\mathbf{1}-\widetilde{Q})v_{\mu}=0,\quad |\mu|\in\Lambda_n,\ \mu\ne\emptyset.
\]
Then
\begin{align*}
(\mathbf{1}-\widetilde{Q})w_r&=(\mathbf{1}-\widetilde{Q})\Bigl(\sum_{k=0}\sum_{|\mu|=k} q^k v_{\mu}\widetilde{w}_r v_{\mu}^*\Bigr)
\\
&=(\mathbf{1}-\widetilde{Q})\widetilde{w}_r+\sum_{k=1}\sum_{|\mu|=k} q^k (\mathbf{1}-\widetilde{Q})v_{\mu}\widetilde{w}_r v_{\mu}^*=(\mathbf{1}-\widetilde{Q})\widetilde{w}_r.
\end{align*}

To complete the proof it remains to note that $\widetilde{Q}v_{n+r}=0$, $r=\overline{1,m}$. So,
\[
\widetilde{Q}\widetilde{w}_r=\widetilde{Q}v_{n+r}(\mathbf{1}-|q|^2\widetilde{Q})^{\frac{1}{2}}=0.
\]
\end{proof}

\begin{theorem}\label{q_ser_gen}
Let $v_i$, $i=\overline{1,n}+m$, be the isometries generating $\mathcal{E}_{n,m}^0$, and $\widetilde{Q}=\sum_{i=1}^n v_i v_i^*$. Put
\[
\widetilde{w}_r=v_{n+r}(\mathbf{1}-|q|^2\widetilde{Q})^{\frac{1}{2}}\quad \mbox{and}\quad w_r=\sum_{k=0}\sum_{|\mu|=k} q^k v_{\mu}\widetilde{w}_r v_{\mu}^*.
\]
Then
\[
v_i^*v_j=\delta_{ij}\mathbf{1},\quad w_r^* w_l=\delta_{rl}\mathbf{1},\quad v_i^* w_r=q w_r v_i^*,\quad i,j=\overline{1,n},\ r,l=\overline{1,m}.
\]
Moreover, the family $\{v_i,w_r\}_{i=1}^{n}{}_{r=1}^{m}$ generates $\mathcal{E}_{n,m}^0$.
\end{theorem}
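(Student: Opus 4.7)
My plan is to observe that Theorem~\ref{q_ser_gen} is essentially a bookkeeping assembly of the lemmas proved in this section, with one new elementary step. The relations $v_i^* v_j = \delta_{ij}\mathbf{1}$ hold by the very definition of $\mathcal{E}_{n,m}^0 = \mathcal{O}_{n+m}^{(0)}$, while $w_r^* w_l = \delta_{rl}\mathbf{1}$ and $v_i^* w_r = q\, w_r v_i^*$ are exactly the content of the second-to-last lemma. The genuinely new ingredient is the generation claim.

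To establish that $\mathcal{B} := C^*(v_1,\dots,v_n,w_1,\dots,w_m)$ coincides with $\mathcal{E}_{n,m}^0$, it suffices to show that each $v_{n+r}$ lies in $\mathcal{B}$. Since $v_1,\dots,v_n \in \mathcal{B}$, the projection $\widetilde{Q} = \sum_{i=1}^n v_i v_i^*$ is in $\mathcal{B}$. For $|q|<1$, the positive element $\mathbf{1} - |q|^2 \widetilde{Q}$ is bounded below by $(1-|q|^2)\mathbf{1}$ and is therefore invertible, so both $(\mathbf{1}-|q|^2\widetilde{Q})^{1/2}$ and $(\mathbf{1}-|q|^2\widetilde{Q})^{-1/2}$ belong to $\mathcal{B}$ via continuous functional calculus on $\widetilde{Q}$. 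The last lemma before the theorem identifies $\widetilde{w}_r = (\mathbf{1}-\widetilde{Q})\, w_r$, placing $\widetilde{w}_r$ in $\mathcal{B}$; then inverting the defining relation $\widetilde{w}_r = v_{n+r}(\mathbf{1}-|q|^2\widetilde{Q})^{1/2}$ on the right yields
\[
v_{n+r} = (\mathbf{1}-\widetilde{Q})\, w_r\, (\mathbf{1}-|q|^2\widetilde{Q})^{-1/2} \in \mathcal{B}.
\]

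No real obstacle arises: all the delicate work (norm convergence of the series defining $w_r$, cancellations among products $v_\lambda \widetilde{w}_r^* v_\lambda^* v_\mu \widetilde{w}_r v_\mu^*$ for $|\lambda| \ne |\mu|$, and the identity $\widetilde{w}_r = (\mathbf{1}-\widetilde{Q})\, w_r$) has already been packaged in the preceding lemmas. The only place the hypothesis $|q|<1$ of the section is explicitly used in the theorem's proof is in inverting $\mathbf{1} - |q|^2 \widetilde{Q}$, which mirrors precisely the role played by the same condition in Theorem~\ref{zer_q_gen}. Together with the corollary following Theorem~\ref{zer_q_gen}, this result supplies the inverse homomorphism $\psi : \mathcal{E}_{n,m}^q \to \mathcal{E}_{n,m}^0$ that proves $\mathcal{E}_{n,m}^q \simeq \mathcal{E}_{n,m}^0$.
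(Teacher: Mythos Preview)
Your proposal is correct and follows essentially the same route as the paper: the relations are deferred to the preceding lemmas, and the generation claim is proved by writing $v_{n+r}=(\mathbf{1}-\widetilde{Q})\,w_r\,(\mathbf{1}-|q|^2\widetilde{Q})^{-1/2}$, using the identity $\widetilde{w}_r=(\mathbf{1}-\widetilde{Q})w_r$ and the invertibility of $\mathbf{1}-|q|^2\widetilde{Q}$. Your write-up is slightly more explicit about the functional calculus and the role of $|q|<1$, but the argument is the same.
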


\begin{proof}
We need to prove only the last statement of the theorem.  We have
\[
v_{n+r}=\widetilde{w}_r (\mathbf{1}-|q|^2\widetilde{Q})^{-\frac{1}{2}}=(\mathbf{1}-\widetilde{Q})
w_r(\mathbf{1}-|q|^2\widetilde{Q})^{-\frac{1}{2}}\in C^*(w_r, v_i,\ i=\overline{1,n}).
\]
Hence $v_i$, $w_r$, $i=\overline{1,n}$, $r=\overline{1,m}$, generate $\mathcal{E}_{n,m}^0$.
\end{proof}

\begin{corollary}\label{cor2}
The statement of Theorem \ref{q_ser_gen} and the universal property of $\mathcal{E}_{n,m}^q$ imply the existence of a surjective homomorphism $\psi\colon\mathcal{E}_{n,m}^q\rightarrow\mathcal{E}_{n,m}^0$ defined by
\[
\psi(s_i)=v_i,\quad \psi(t_r)=w_r,\quad i=\overline{1,n},\ r=\overline{1,m}.
\]
\end{corollary}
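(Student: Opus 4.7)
The plan is to invoke the universal property of $\mathcal{E}_{n,m}^q$ applied to the family $\{v_i, w_r\}$ constructed inside $\mathcal{E}_{n,m}^0$. Essentially all the analytic content has already been packaged into Theorem \ref{q_ser_gen}: once we know that $\{v_i, w_r\}$ is a family of isometries in $\mathcal{E}_{n,m}^0$ obeying exactly the defining relations \eqref{baseqrel} of $\mathcal{E}_{n,m}^q$, the homomorphism drops out of the universal construction and surjectivity is immediate from the generation statement.

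Concretely, first I would match the relations one by one. The $v_i$ are by definition isometries with mutually orthogonal ranges, so $v_i^* v_j = \delta_{ij}\mathbf{1}$ encodes both $v_j^* v_j = \mathbf{1}$ and $v_i^* v_j = 0$ for $i \neq j$. From Theorem \ref{q_ser_gen}, the elements $w_r$ satisfy $w_r^* w_l = \delta_{rl}\mathbf{1}$, which in particular makes each $w_r$ an isometry and gives $w_r^* w_l = 0$ for $r \neq l$. Finally, the $q$-commutation $v_i^* w_r = q\, w_r v_i^*$ from that same theorem is precisely the third relation in \eqref{baseqrel}. Thus the assignment $s_i \mapsto v_i$, $t_r \mapsto w_r$ respects every defining relation of the universal $C^*$-algebra $\mathcal{E}_{n,m}^q$, and the universal property supplies a unique $*$-homomorphism $\psi \colon \mathcal{E}_{n,m}^q \to \mathcal{E}_{n,m}^0$ with the stated action on generators.

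Surjectivity is the last bullet of Theorem \ref{q_ser_gen}: the family $\{v_i, w_r\}$ generates $\mathcal{E}_{n,m}^0$ as a $C^*$-algebra. Since the range of any $*$-homomorphism between $C^*$-algebras is closed and contains the generators $v_i, w_r$ of the target, it equals $\mathcal{E}_{n,m}^0$.

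There is no real obstacle here; the substantive work lies upstream, in verifying that the series defining $w_r$ converges in norm and in checking the $q$-commutation and orthogonality relations, both of which are accomplished in the lemmas preceding Theorem \ref{q_ser_gen}. The only point worth stating explicitly is that $\mathcal{E}_{n,m}^q$ does admit a universal $C^*$-enveloping algebra: this follows because the Fock representation $\pi_{F,q}$ (which exists for $|q|\le 1$ by Theorem \ref{thm_fock}) provides a concrete $*$-representation with bounded images of the generators, so the usual supremum over representations defining the universal $C^*$-norm is finite, and the universal property then applies verbatim to produce $\psi$.
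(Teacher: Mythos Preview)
Your proposal is correct and matches the paper's approach: the corollary is stated as an immediate consequence of Theorem~\ref{q_ser_gen} together with the universal property, and your write-up simply unpacks that inference (checking the relations, invoking universality, and reading off surjectivity from the generation statement). The only addition you make is the remark on the existence of the universal $C^*$-algebra, which the paper takes for granted from the earlier discussion of the Fock representation.
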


Now we are ready to formulate the main result of this section.

\begin{theorem}\label{qstab_thm}
For any $q\in\mathbb{C}$, $|q|<1$, one has an isomorphism $\mathcal{E}_{n,m}^q\simeq\mathcal{E}_{n,m}^0$.
\end{theorem}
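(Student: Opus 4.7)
The plan is to verify that the two surjective $*$-homomorphisms $\varphi\colon\mathcal{E}_{n,m}^0\to\mathcal{E}_{n,m}^q$ and $\psi\colon\mathcal{E}_{n,m}^q\to\mathcal{E}_{n,m}^0$ already constructed (after Theorem~\ref{zer_q_gen} and in Corollary~\ref{cor2}) are mutually inverse. Since both are surjective $C^*$-algebra homomorphisms defined on generators, it suffices to check that $\psi\circ\varphi$ and $\varphi\circ\psi$ act as the identity on the respective generating families $\{v_i,v_{n+r}\}$ and $\{s_j,t_r\}$.

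For $\psi\circ\varphi$, the identity $\psi(\varphi(v_i))=\psi(s_i)=v_i$ is immediate. For the generator $v_{n+r}$ I would apply $\psi$ to the explicit formula $\varphi(v_{n+r})=\widehat{t}_r=(\mathbf{1}-Q)t_r(\mathbf{1}-|q|^2 Q)^{-1/2}$. Since $\psi$ is a $*$-homomorphism, $\psi(Q)=\widetilde{Q}$ and $\psi$ commutes with continuous functional calculus, so $\psi(\widehat{t}_r)=(\mathbf{1}-\widetilde{Q})w_r(\mathbf{1}-|q|^2\widetilde{Q})^{-1/2}$. The preceding lemma says $(\mathbf{1}-\widetilde{Q})w_r=\widetilde{w}_r=v_{n+r}(\mathbf{1}-|q|^2\widetilde{Q})^{1/2}$, and the two functional-calculus factors cancel, yielding $v_{n+r}$.

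For $\varphi\circ\psi$, the identity on $s_i$ is again immediate. On $t_r$ one computes $\varphi(\psi(t_r))=\varphi(w_r)$ by applying $\varphi$ termwise to the norm-convergent series defining $w_r$. Using $\varphi(v_{\mu})=s_{\mu}$, $\varphi(\widetilde{Q})=Q$, and $\varphi(\widetilde{w}_r)=\widehat{t}_r(\mathbf{1}-|q|^2 Q)^{1/2}=\widetilde{t}_r$ (by construction of $\widehat{t}_r$ via polar decomposition), one obtains
\[
\varphi(w_r)=\sum_{k=0}^{\infty}\sum_{|\mu|=k} q^k s_{\mu}\widetilde{t}_r s_{\mu}^*,
\]
which equals $t_r$ by Proposition~\ref{gener1}. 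This closes the loop.

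The argument is largely bookkeeping once the two surjections are in hand, so there is no serious obstacle left; the subtlety to keep an eye on is that $\varphi$ and $\psi$, being $C^*$-homomorphisms, must be applied to $(\mathbf{1}-|q|^2 Q)^{\pm 1/2}$ and $(\mathbf{1}-|q|^2\widetilde{Q})^{\pm 1/2}$ via continuous functional calculus (which is automatic, since the spectra of $Q$ and $\widetilde{Q}$ lie in $[0,1]$ and $|q|<1$, so $\mathbf{1}-|q|^2 Q$ is positive and invertible in $\mathcal{E}_{n,m}^q$, and similarly for $\widetilde{Q}$). With this, $\varphi$ and $\psi$ are mutually inverse isomorphisms.
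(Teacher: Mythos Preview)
Your proof is correct and follows essentially the same approach as the paper: both verify that $\varphi$ and $\psi$ are mutual inverses by checking on generators, using $(\mathbf{1}-\widetilde{Q})w_r=\widetilde{w}_r$ for $\psi\circ\varphi$ and Proposition~\ref{gener1} for $\varphi\circ\psi$, with functional calculus handling the square-root factors. The paper's write-up is slightly more explicit in first computing $\psi(\widetilde{t}_r)=\widetilde{w}_r$ as an intermediate step, but the argument is the same.
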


\begin{proof}
In Theorem \ref{zer_q_gen}, we constructed the surjective homomorphism $\varphi\colon\mathcal{E}_{n,m}^0\rightarrow\mathcal{E}_{n,m}^q$ defined by
\[
\varphi(v_i)=s_i,\quad \varphi(v_{n+r})=\widehat{t}_r,\quad i=\overline{1,n},\ r=\overline{1,m}.
\]
Show that $\psi\colon\mathcal{E}_{n,m}^q\rightarrow\mathcal{E}_{n,m}^0$ from Corollary \ref{cor2} is the inverse of $\varphi$. Indeed, the equalities  ${\psi(s_i)=v_i}$, $i=\overline{1,n}$, imply that
\[
\psi (\mathbf{1}-Q)=\mathbf{1}-\widetilde{Q}.
\]
Then, since $\psi(t_r)=w_r$, we get
\[
\psi (\widetilde{t}_r)=\psi ((\mathbf{1}-Q)t_r)=(\mathbf{1}-\widetilde{Q})w_r=\widetilde{w}_r,\quad r=\overline{1,m},
\]
and
\[
\psi(\widehat{t}_r)=\psi(\widetilde{t}_r (\mathbf{1}-|q|^2 Q)^{-\frac{1}{2}})=
\widetilde{w}_r (\mathbf{1}-|q|^2 \widetilde{Q})^{-\frac{1}{2}}=v_{n+r},\quad r=\overline{1,m}.
\]
So, $\psi\varphi(v_i)=\psi (s_i)=v_i$, $\psi\varphi(v_{n+r})=\psi(\widehat{t}_r)=v_{n+r}$, $i=\overline{1,n}$, $r=\overline{1,m}$, and
\[
\psi\varphi= id_{\mathcal{E}_{n,m}^0}.
\]

Show that $\varphi\psi=id_{\mathcal{E}_{n,m}^q}$. Indeed,
\[
\varphi(\widetilde{w}_r)=\varphi(v_{n+r}(\mathbf{1}-|q|^2\widetilde{Q})^{\frac{1}{2}})=
\widehat{t}_r (\mathbf{1}-|q|^2 Q)^{\frac{1}{2}}=\widetilde{t}_r,\quad r=\overline{1,d}.
\]
Then for any $r=\overline{1,m}$, one has
\[
\varphi(w_r)=\sum_{k=0}\sum_{|\mu|=k}q^k\varphi(v_{\mu})\varphi(\widetilde{w}_r)\varphi{v_{\mu}^*}=
\sum_{k=0}\sum_{|\mu|=k} q^ks_{\mu} \widetilde{t}_r s_{\mu}^*= t_r.
\]
So, $\varphi\psi(s_i)=\varphi(v_i)=s_i$, $\varphi\psi(t_r)=\varphi(w_r)=t_r$, $i=\overline{1,n}$, $r=\overline{1,m}$.
\end{proof}

\section {The case $|q|=1$}
In this section, we discuss the case $|q|=1$. Notice that for $|q|=1$, the relations in $\mathcal E_{n,m}^q$
imply that $t_js_i = q s_it_j$, $i=\overline{1,n}$, $j=\overline{1, m}$. Indeed, for $B_{ij} = t_js_i - qs_it_j$ we
have directly $B_{ij}^* B_{ij} =0$.

\subsection{Auxiliary results}
In this subsection we collect some general facts about $C^*$-dynamical systems, crossed products and Rieffel deformations which we will use in our considerations.

\subsubsection{Fixed point subalgebras}\label{group_action}
First we recall how properties of a fixed point subalgebra of a $C^*$-algebra with an action of a compact group are related to properties of the whole algebra.

\begin{definition}
Let $A$ be a $C^*$-algebra with an action $\gamma$ of a compact group $G$. A fixed point subalgebra $A^\gamma$ is a subset of all $a \in A$ such that $\gamma_g(a) = a$ for all $g \in G$.
\end{definition}

Notice  that for every action of a compact group $G$ on a $C^*$-algebra $A$ one can construct a faithful conditional expectation $E_{\gamma} : A \rightarrow A^\gamma$ onto the fixed point subalgebra, given by
\[ E_{\gamma}(a) = \int_{G} \gamma_g(a) d \lambda, \]
where $\lambda$ is the Haar measure on $G$.

 A homomorphism $\varphi : A \rightarrow B$ between $C^*$-algebras with actions $\alpha$ and $\beta$ of a compact group $G$ is called equivariant if
 \[ \varphi \circ \alpha_g = \beta_g \circ \varphi \text{ for any } g \in G.
 \]
\begin{proposition}[\cite{Ozawa}, Section 4.5, Theorem 1, 2]\label{faith_fix}

\begin{enumerate}
    \item Let $\gamma$ be an action of a compact group $G$ on a $C^*$-algebra $A$. Then $A$ is nuclear if and only if $A^\gamma$ is nuclear.
    \item Let $\varphi : A \rightarrow B$ be an equivariant $*$-homomorphism. Then $\varphi$ is injective on $A$ if and only if $\varphi$ is injective on $A^\alpha$.
\end{enumerate}
\end{proposition}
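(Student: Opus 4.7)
The plan rests on two standard tools: the faithful conditional expectation $E_\gamma\colon A\to A^\gamma$ obtained by averaging against the Haar measure $\lambda$, and a realization of $A^\gamma$ as a corner of the crossed product $A\rtimes_\gamma G$.

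For part (2), one direction is immediate since $A^\gamma\subset A$. For the converse, assume $\varphi$ is injective on $A^\gamma$ and suppose $\varphi(a)=0$ for some $a\in A$. Writing $E$ for the analogous averaging expectation on $B$, equivariance of $\varphi$ yields $E\circ\varphi=\varphi\circ E_\gamma$. Applying this identity to the positive element $a^\ast a$ gives $\varphi(E_\gamma(a^\ast a))=E(\varphi(a^\ast a))=0$, so injectivity of $\varphi$ on $A^\gamma$ forces $E_\gamma(a^\ast a)=0$; faithfulness of $E_\gamma$ then yields $a^\ast a=0$, hence $a=0$.

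For the forward direction of part (1), I would realize $A^\gamma\cong p(A\rtimes_\gamma G)p$, where $p=\int_G u_g\,d\lambda(g)$ is the projection in the multiplier algebra of $A\rtimes_\gamma G$ corresponding to the trivial representation and $u_g$ are the canonical unitaries implementing $\gamma$. Since $G$ is compact and hence amenable, nuclearity transfers from $A$ to $A\rtimes_\gamma G$, and nuclearity is inherited by corners of nuclear algebras; this gives the implication ``$A$ nuclear $\Rightarrow A^\gamma$ nuclear''.

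For the converse, I would use the Peter--Weyl decomposition $A=\overline{\bigoplus_{\pi\in\widehat G}A_\pi}$ into isotypic spectral subspaces, each $A_\pi$ being an $A^\gamma$-bimodule with $A_\pi^\ast A_\pi\subseteq A^\gamma$, together with nuclearity of $C^\ast(G)$ (a $c_0$-direct sum of matrix algebras when $G$ is compact). These ingredients combine to exhibit $A\rtimes_\gamma G$ as locally built from tensor products of nuclear pieces. The main technical obstacle is showing that the corner projection $p$ above is \emph{full} in $A\rtimes_\gamma G$, so that $A^\gamma$ and $A\rtimes_\gamma G$ become Morita equivalent and nuclearity of $A^\gamma$ propagates first to $A\rtimes_\gamma G$ and then to the embedded copy of $A$. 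This saturation/fullness statement for compact group actions is precisely the structural content of Ozawa's cited theorem, and it is the step I would expect to require the most care.
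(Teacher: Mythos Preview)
The paper does not supply its own proof of this proposition; it is quoted from Brown--Ozawa with a bare citation, so there is no argument in the paper to compare against. That said, let me comment on the proposal itself.

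Your argument for part (2) is correct and standard: the intertwining relation $E\circ\varphi=\varphi\circ E_\gamma$ follows from equivariance, and faithfulness of $E_\gamma$ finishes it. Likewise, the forward direction of part (1) is fine: $G$ compact $\Rightarrow$ amenable $\Rightarrow$ $A\rtimes_\gamma G$ nuclear, and $A^\gamma\cong p(A\rtimes_\gamma G)p$ is a hereditary subalgebra of a nuclear algebra.

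The converse of part (1), however, has a genuine gap. The projection $p=\int_G u_g\,d\lambda(g)$ is \emph{not} full in $A\rtimes_\gamma G$ in general, so $A^\gamma$ and $A\rtimes_\gamma G$ are not Morita equivalent without an extra hypothesis (saturation of the action). The trivial action of any nontrivial compact $G$ on $A$ already shows this: then $A\rtimes_\gamma G\cong A\otimes C^*(G)$ and $p=1\otimes p_{\mathrm{triv}}$ generates the proper ideal $A\otimes\mathbb{C}p_{\mathrm{triv}}$. So the ``structural content'' you attribute to the cited theorem is not a fullness statement, and your plan as written does not close. The argument in Brown--Ozawa instead passes through a completely positive factorization of $\mathrm{id}_A$ through matrix amplifications of $A^\gamma$, using the Peter--Weyl decomposition to build finite-dimensional approximations directly rather than via Morita equivalence. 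Alternatively, one can repair your approach by first stabilizing: the action $\gamma\otimes\mathrm{Ad}\,\rho$ on $A\otimes\mathbb{K}(L^2(G))$ \emph{is} saturated, and its fixed-point algebra is $A\rtimes_\gamma G$, which lets the Morita argument go through after one extra step.
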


\subsubsection{Crossed products}

Given a locally compact group $G$ and a $C^*$-algebra $A$ with a $G$-action $\alpha$, consider the full crossed product $C^*$-algebra $A \rtimes_\alpha G$, see \cite{williams}. One has two natural embeddings into the multiplier algebra  $M(A \rtimes_\alpha G)$,
\begin{gather*}
i_A : A \rightarrow M(A \rtimes_\alpha G), \ i_G : G \rightarrow M(A \rtimes_\alpha G), 
\\
(i_A(a) f)(s) = a f(s), \quad (i_G(t)f)(s) = \alpha_t(f(t^{-1} s)), \quad t,s\in G, \ a \in A,
\end{gather*}
for $f \in C_c(G, A)$.
\begin{remark}\label{rem2}
Obviously, $i_G(s)$ is a unitary element of $M(A\rtimes_{\alpha} G)$ for any $s\in G$. Recall that $i_G$ determines the following homomorphism denoted also by $i_G$
\[
i_G\colon C^*(G)\rightarrow M(A\rtimes_{\alpha} G)
\]
defined by
\[
i_G (f)=\int_G f(s) i_G(s) d\lambda(s),
\]
where $\lambda$ is the left Haar measure on $G$.

Notice that for any $g\in C_c(G,A)$ one has
\[
(i_G(f) g)(t)=f\cdot_{\alpha} g,
\]
where $\cdot_\alpha$ denotes the product in $A\rtimes_{\alpha} G$. In particular, when $A$ is unital we can identify $i_G(f)$ with $f\cdot_\alpha \mathbf{1}_A$, and in fact $i_G$ maps $C^*(G)$ into $A \rtimes_\alpha G$. Also notice that
\[
i_G(t) i_A (a) i_G(t)^{-1}  =i_A( \alpha _t(a)) \in M(A \rtimes_\alpha G).
\]
\end{remark}

If  $\varphi$ is an equivariant homomorphism between $C^*$-algebras $A$ with a $G$-action $\alpha$ and $B$ with a $G$-action $\beta$, then one can define the homomorphism
\[
\varphi \rtimes G : A \rtimes_\alpha G \rightarrow B \rtimes_\beta G, \ (\varphi \rtimes G)(f)(t) = \varphi(f(t)),\quad f\in C_c(G,A).
\]
Let $A$ be a unital $C^*$-algebra with $G$-action $\alpha$. Then $\iota_A\colon\mathbb{C}\rightarrow A$,
\[
\iota_A(\lambda)=\lambda\mathbf 1_A,
\]
is an equivariant homomorphism, where $G$ acts trivially on $\mathbb{C}$. Since $\mathbb{C}\rtimes G=C^* (G)$, one has that
\[
\iota_A\rtimes G\colon C^*(G)\rightarrow A\rtimes_{\alpha} G.
\]
In fact, in this case we have
\begin{equation}\label{unital_embedding}
    \iota_A \rtimes G = i_G,
\end{equation}
where $i_G\colon C^*(G)\rightarrow A\rtimes_{\alpha} G$ is described in Remark \ref{rem2}. Indeed, for any $g\in G_c (G,A)$ one has
\begin{equation*}
    \begin{split}
        (i_G(f)\cdot_\alpha g)(s) & = \int_G f(t) \alpha_t(g(t^{-1}s)) dt
        \\
        & = \int_G f(t) 1_A \alpha_t(g(t^{-1}s)) dt
        \\
        & = ((f(\cdot)1_A) \cdot_\alpha g)(s)=((\iota_A \rtimes G)(f)\cdot_{\alpha}g)(s),
    \end{split}
\end{equation*}
implying $i_G(f)=(\iota_A\rtimes G)(f)$ for any $f\in C^*(G)$.

\subsubsection{Rieffel's deformation}

Below, we recall some basic facts on Rieffel's deformations. Given a $C^*$-algebra $A$ equipped with an action $\alpha$ of $\mathbb{R}^n$ and a skew symmetric matrix
$\Theta \in M_n(\mathbb R)$, one can construct the Rieffel deformation of $A$, denoted by $A_\Theta$, see \cite{anderson,rieffel_primar}. In particular the elements $a \in A$ such that $x \mapsto \alpha_x(a) \in C^\infty(\mathbb{R}^n, A)$ form a dense subset $A_\infty$ in $A_\Theta$ and for any $a,b\in  A_\infty$ their product in $A_\Theta$ is given by the following oscillatory integral (see \cite{rieffel_primar}):
\begin{equation}\label{rieffel_product}
    a \cdot_\Theta b :=
\int_{\mathbb{R}^n} \int_{\mathbb{R}^n} \alpha_{\Theta(x)}(a) \alpha_y(b) e^{2 \pi i \langle x, y \rangle} dx dy,
\end{equation}
where $\langle \cdot,\cdot\rangle$ is a scalar product in $\mathbb R^n$.

In what follows, we will be interested in periodic actions of $\mathbb{R}^n$, i.e., we assume that $\alpha$ is an action of $\mathbb{T}^n$.
Given a character $\chi \in \widehat{\mathbb{T}}^n \simeq \mathbb{Z}^n$, consider
\[
A_\chi = \{ a \in A : \alpha_z(a) = \chi(z)a \text{ for every } z \in \mathbb{T}^n \} .
\]
Then
\[
A=\overline{\bigoplus_{\chi\in\mathbb Z^n} A_{\chi}},
\]
where some terms could be equal to zero, and $A_{\chi_1}\cdot A_{\chi_2}\subset A_{\chi_1+\chi_2}$, $A_{\chi}^*=A_{-\chi}$. So,  $A_\chi$, $\chi \in \mathbb Z^n$, can be treated as homogeneous components of
$\mathbb Z^n$-grading on $A$. Conversely, any $\mathbb{Z}^n$-grading of $A$ determines an action of $\mathbb{T}^n$ on $A$:
for $a \in A_p$ we let $\alpha_t(a) = e^{2 \pi i \langle t, p \rangle} a$ (see, e.g., \cite{williams}).

For a periodic action $\alpha$ of $\mathbb{R}^n$ on a $C^*$-algebra $A$ and a skew-symmetric matrix $\Theta \in M_n(\mathbb{R})$, construct the Rieffel deformation $A_\Theta$. Notice that all
homogeneous elements belong to $A_{\infty}$. Apply formula (\ref{rieffel_product}) to $a \in A_p$, $b \in A_q$:
\begin{equation*}
    \begin{split}
        a \cdot_\Theta b & =  \int_{\mathbb{R}^n}
        \int_{\mathbb{R}^n} e^{2 \pi i \langle \Theta(x), p \rangle} a e^{2 \pi i \langle y, q \rangle}
        b e^{2 \pi i \langle x, y \rangle} dx\, dy
        \\
        & = a \cdot b \int_{\mathbb{R}^n} e^{2 \pi i \langle y, q \rangle}
        \int_{\mathbb{R}^n} e^{2 \pi i \langle x, -\Theta(p) \rangle} e^{2 \pi i \langle x, y \rangle} dx\, dy
        \\
        & = a \cdot b \int_{\mathbb{R}^n} e^{2 \pi i \langle y, q \rangle} \delta_{y - \Theta(p)} \,dy
        \\
        & = e^{2 \pi i \langle \Theta(p), q \rangle} a \cdot b .
    \end{split}
\end{equation*}
Thus, given $a \in A_p$ and $b \in A_q$ one has
\begin{equation} \label{homogeneous_rieff_product}
    a \cdot_\Theta b = e^{2 \pi i \langle \Theta(p), q \rangle}a \cdot b .
\end{equation}

\begin{remark}
Notice that $A_\Theta$ also possesses a $\mathbb{Z}^n$-grading such that $(A_\Theta)_p = A_p$ for every $p \in \mathbb{Z}^n$. Due to \eqref{homogeneous_rieff_product}, we have $a\cdot_\Theta b = a\cdot b$ for any $a,b\in A_{\pm p}$, $p\in\mathbb{Z}^n$. Indeed, for any skew
symmetric $\Theta\in M_n(\mathbb R^n)$ and $p\in\mathbb Z^n$, one has $\langle\Theta\, p\, ,\, \pm p\rangle=0$. The involution on $(A_\Theta)_p$ coincides with the involution on $A_p$.
\end{remark}

Consider a $C^*$-dynamical system $(A,\mathbb T^n, \alpha)$, and its covariant
representation $(\pi,U)$ on a Hilbert space $\mathcal H$. For any $p\in \mathbb{Z}^n\simeq  \mathbb{\widehat{T}}^n$, put
\[
 \mathcal H_p = \{ h\in \mathcal H \mid U_t h = e^{2\pi i \langle t,p\rangle} h\}.
\]
Then $\mathcal H = \bigoplus _{p\in \mathbb Z^n} \mathcal H_p$ (see \cite{williams}).

\begin{proposition}[\cite{warped}, Theorem 2.8]\label{theta_rep}
Let $(\pi,U)$ be a covariant representation of $(A,\mathbb T^n,\alpha)$ on a Hilbert space $\mathcal{H}$. Then one can define a representation $\pi_\Theta$ of $A_\Theta$ as follows:
\[
\pi_\Theta(a) \xi = e^{2 \pi i \langle \Theta(p), q \rangle} \pi(a) \xi,
\]
for every $\xi \in \mathcal{H}_q$, $a \in A_p$, $p,q\in \mathbb Z^n$.
Moreover, $\pi_\Theta$ is faithful if and only if $\pi$ is faithful.
\end{proposition}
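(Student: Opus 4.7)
The plan is to define $\pi_\Theta$ first on the dense $*$-subalgebra $A_{\mathrm{fin}} = \bigoplus_{p\in\mathbb{Z}^n} A_p$ of finite sums of homogeneous elements by the formula in the statement, verify that it is a $*$-homomorphism with respect to the deformed product $\cdot_\Theta$, extend it by continuity to all of $A_\Theta$, and then derive the faithfulness equivalence by reducing to the fixed-point subalgebra via Proposition~\ref{faith_fix}.

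For multiplicativity, take $a\in A_p$, $b\in A_q$ and $\xi\in\mathcal{H}_r$. Covariance of $(\pi,U)$ gives $\pi(b)\xi\in\mathcal{H}_{q+r}$, so
\[
\pi_\Theta(a)\pi_\Theta(b)\xi = e^{2\pi i[\langle\Theta(p),q+r\rangle + \langle\Theta(q),r\rangle]}\pi(a)\pi(b)\xi.
\]
On the other hand, formula \eqref{homogeneous_rieff_product} gives $a\cdot_\Theta b = e^{2\pi i\langle\Theta(p),q\rangle}ab \in A_{p+q}$, so
\[
\pi_\Theta(a\cdot_\Theta b)\xi = e^{2\pi i[\langle\Theta(p+q),r\rangle + \langle\Theta(p),q\rangle]}\pi(a)\pi(b)\xi,
\]
which agrees with the previous expression by bilinearity of $\langle\Theta(\cdot),\cdot\rangle$. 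For the involution I use that $a^*\in A_{-p}$ together with the skew-symmetry identity $\langle\Theta(p),p\rangle = 0$ to verify $\pi_\Theta(a^*) = \pi_\Theta(a)^*$ on each $\mathcal{H}_q$.

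For boundedness, I introduce the unitary $V_p\in B(\mathcal{H})$ acting as multiplication by $e^{2\pi i\langle\Theta(p),q\rangle}$ on $\mathcal{H}_q$; then $\pi_\Theta(a) = V_p\pi(a)$ for $a\in A_p$. A direct calculation yields
\[
U_t\pi_\Theta(a)U_t^{-1} = e^{2\pi i\langle t,p\rangle}\pi_\Theta(a),\qquad a\in A_p,
\]
so $(\pi_\Theta,U)$ is a covariant representation for the natural $\mathbb{T}^n$-action $\alpha^\Theta$ on $A_\Theta$ (whose homogeneous components coincide with those of $\alpha$). To promote the map from $A_{\mathrm{fin}}$ to a contractive $*$-homomorphism on all of $A_\Theta$, I would invoke the realization of $A_\Theta$ inside the multiplier algebra of $A\rtimes_\alpha\mathbb{T}^n$ (Kasprzak--Landstad picture), where the integrated representation of the crossed product induced by $(\pi,U)$ automatically extends $\pi_\Theta$.

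For the faithfulness statement, $(A_\Theta)_0 = A_0$ as a $C^*$-algebra, and on $A_0$ one has $V_0 = \mathrm{id}$, so $\pi_\Theta|_{A_0} = \pi|_{A_0}$. Applying Proposition~\ref{faith_fix}(2) to the two equivariant maps $\pi\colon(A,\alpha)\to B(\mathcal{H})$ and $\pi_\Theta\colon(A_\Theta,\alpha^\Theta)\to B(\mathcal{H})$, injectivity of $\pi$ on $A$ is equivalent to injectivity on $A_0$, which is the same condition as injectivity of $\pi_\Theta$ on $A_0$, which in turn is equivalent to injectivity of $\pi_\Theta$ on $A_\Theta$. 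I expect the main obstacle to be the bounded-extension step: the componentwise identity $\pi_\Theta(a)=V_p\pi(a)$ does not directly yield a global $C^*$-norm bound in the deformed norm, and one genuinely needs either the oscillatory-integral machinery of \cite{rieffel_primar} or the crossed-product realization to close this gap.
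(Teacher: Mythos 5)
The paper does not prove this statement at all -- it is imported verbatim as Theorem 2.8 of \cite{warped} -- so there is no internal proof to compare against; what you have written is a reconstruction, and as a reconstruction it is sound in outline. Your algebraic verifications are correct and consistent with the paper's conventions: multiplicativity of $\pi_\Theta$ on homogeneous elements follows exactly as you compute from \eqref{homogeneous_rieff_product} and bilinearity, the $*$-compatibility uses $\langle \Theta(p),p\rangle=0$ as you say, and the identity $U_t\pi_\Theta(a)U_t^{-1}=e^{2\pi i\langle t,p\rangle}\pi_\Theta(a)$ makes $(\pi_\Theta,U)$ covariant for $\alpha^\Theta$. The gap you flag is the genuinely nontrivial point: the formula only defines a $*$-homomorphism on the algebraic direct sum of the $A_p$, and nothing in the componentwise identity $\pi_\Theta(a)=V_p\pi(a)$ controls the deformed $C^*$-norm, so continuity must come from elsewhere. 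The fix you name is the right one and is already available inside the paper: the integrated form of $(\pi,U)$ (with $U$ lifted to a periodic representation of $\mathbb{R}^n$) is a nondegenerate representation of $A\rtimes_\alpha\mathbb{R}^n$, it extends to $M(A\rtimes_\alpha\mathbb{R}^n)$, and composing with the embedding $i_{A_\Theta}(a_p)=i_A(a_p)\,i_{\mathbb{R}^n}(-\Theta(p))$ gives a bounded representation of $A_\Theta$ which one then checks agrees with the stated formula on homogeneous vectors; the only care needed there is the sign and factor-of-two bookkeeping between the conventions of \cite{rieffel_primar}, \cite{Kasprzak_rieffel} and \cite{Amandip}, which do not all match the normalization of \eqref{homogeneous_rieff_product} literally. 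Your faithfulness argument is clean and handles both directions at once, since $(A_\Theta)^{\alpha^\Theta}=A_0=A^\alpha$ with undeformed product and $\pi_\Theta|_{A_0}=\pi|_{A_0}$; the one cosmetic correction is that Proposition \ref{faith_fix}(2) should be applied with target $\pi(A)$, respectively $\pi_\Theta(A_\Theta)$, equipped with the $\mathrm{Ad}(U)$-action (a priori $B(\mathcal{H})$ itself carries no point-norm continuous $\mathbb{T}^n$-action), which covariance makes legitimate. So: not a complete proof as written, because the extension step is only sketched, but the missing step is correctly identified and closable with the tools you cite rather than being a conceptual error.
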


 It is known that Rieffel's deformation can be embedded into $M(A \rtimes_\alpha \mathbb{R}^n)$, but for the periodic actions we have an explicit description of this embedding.

 \begin{proposition}[\cite{Amandip}, Lemma 3.1.1]
The following mapping defines an embedding
\[ i_{A_\Theta} : A_\Theta \rightarrow M(A \rtimes_\alpha \mathbb R^n), \ i_{A_\Theta}(a_p) = i_A(a_p)i_{\mathbb R^n}(-\Theta(p)), \]
where $p \in \mathbb{Z}^n$ and $a_p$ is homogeneous of degree $p$.
\end{proposition}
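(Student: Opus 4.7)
The plan is to verify in sequence that $i_{A_\Theta}$, defined on homogeneous components by $a_p \mapsto i_A(a_p)\,i_{\mathbb{R}^n}(-\Theta(p))$, extends to a well-defined injective $*$-homomorphism from $A_\Theta$ into $M(A \rtimes_\alpha \mathbb{R}^n)$. The central tool is the covariance identity in the multiplier algebra,
\[
i_{\mathbb{R}^n}(t)\,i_A(a) = i_A(\alpha_t(a))\,i_{\mathbb{R}^n}(t),\quad a\in A,\ t\in\mathbb{R}^n,
\]
combined with the observation from the derivation of (\ref{homogeneous_rieff_product}) that $\alpha_t$ acts on $A_q$ as the character $e^{2\pi i\langle t, q\rangle}$.

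First I would check multiplicativity and the $*$-property on the dense $*$-subalgebra of finite sums of homogeneous elements, which is contained in $A_\infty$. For $a_p \in A_p$ and $b_q \in A_q$, one slides $i_A(b_q)$ past $i_{\mathbb{R}^n}(-\Theta(p))$ using covariance; the resulting scalar phase is precisely the Rieffel factor $e^{2\pi i \langle \Theta(p), q\rangle}$ appearing in (\ref{homogeneous_rieff_product}), so that $i_{A_\Theta}(a_p)\,i_{A_\Theta}(b_q) = i_{A_\Theta}(a_p \cdot_\Theta b_q)$. The identity $i_{A_\Theta}(a_p^*) = i_{A_\Theta}(a_p)^*$, where $a_p^* \in A_{-p}$, is an analogous short computation using $i_{\mathbb{R}^n}(t)^* = i_{\mathbb{R}^n}(-t)$ and the skewness of $\Theta$. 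Density of the homogeneous sums in $A_\Theta$ is inherited from their density in $A$, obtained via the Fourier isotypic projections $E_\chi(a) = \int_{\mathbb{T}^n}\overline{\chi(z)}\,\alpha_z(a)\,dz$.

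The boundedness needed to extend $i_{A_\Theta}$ continuously, together with injectivity, I would handle jointly via Proposition \ref{theta_rep}. Choosing a faithful covariant representation $(\pi, U)$ of $(A,\mathbb{T}^n,\alpha)$ and applying the universal property of the crossed product produces a $*$-representation of $M(A \rtimes_\alpha \mathbb{R}^n)$ sending $i_{A_\Theta}(a_p)$ to $\pi(a_p)\,U_{-\Theta(p)}$; by Proposition \ref{theta_rep} this operator implements $\pi_\Theta(a_p)$. Hence the composition of $i_{A_\Theta}$ with this representation coincides with $\pi_\Theta$, which is bounded on the dense subalgebra by $\|\cdot\|_{A_\Theta}$ and, by the faithfulness clause of Proposition \ref{theta_rep}, faithful on $A_\Theta$. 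These two facts give simultaneously the continuous extension of $i_{A_\Theta}$ to $A_\Theta$ and its injectivity. The main obstacle, as I see it, is verifying the identification $\pi(a_p)\,U_{-\Theta(p)} = \pi_\Theta(a_p)$ not only on strict homogeneous elements but on all of $A_\infty$; reconciling the oscillatory integral (\ref{rieffel_product}) with the algebraic computation in the multiplier algebra is the analytic heart of Rieffel's construction and is where the bulk of the work in the cited reference resides.
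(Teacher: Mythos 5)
The paper does not prove this statement at all: it is imported verbatim from \cite{Amandip}, Lemma 3.1.1, so there is no internal argument to compare yours against; your proposal has to stand on its own, and it does not quite close.

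Two points. First, a conventions check you assert but do not perform: with the paper's covariance relation $i_{\mathbb{R}^n}(t)i_A(a)i_{\mathbb{R}^n}(t)^{-1}=i_A(\alpha_t(a))$ and $\alpha_t(b_q)=e^{2\pi i\langle t,q\rangle}b_q$, sliding $i_A(b_q)$ past $i_{\mathbb{R}^n}(-\Theta(p))$ produces the phase $e^{-2\pi i\langle\Theta(p),q\rangle}$, not the factor $e^{2\pi i\langle\Theta(p),q\rangle}$ of (\ref{homogeneous_rieff_product}); likewise $\pi(a_p)U_{-\Theta(p)}$ acts on $\mathcal{H}_q$ as $e^{-2\pi i\langle\Theta(p),q\rangle}\pi(a_p)$, i.e.\ it implements $\pi_{-\Theta}(a_p)$ rather than $\pi_\Theta(a_p)$. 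So as written your two computations are mutually consistent but both land on $A_{-\Theta}$; this is repairable by replacing $-\Theta(p)$ with $\Theta(p)$ (or relabelling $\Theta\mapsto-\Theta$, harmless since $\Theta$ is an arbitrary skew-symmetric matrix), but the phases should be computed, not asserted to match.

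Second, and more seriously, the boundedness step fails as designed. Your map is a $*$-homomorphism only on the dense, incomplete $*$-subalgebra of finite sums of homogeneous elements, and such a map need not be continuous for $\|\cdot\|_{A_\Theta}$; an upper norm bound must be proved before any extension to $A_\Theta$ exists. Your mechanism cannot supply it: to invoke Proposition \ref{theta_rep} you need a covariant pair $(\pi,U)$ with $U$ periodic (otherwise there are no eigenspaces $\mathcal{H}_q$), but then the integrated form $\pi\rtimes U$ is very far from faithful on $A\rtimes_\alpha\mathbb{R}^n$ (on $C^*(\mathbb{R}^n)\simeq C_0(\mathbb{R}^n)$ it only sees the points of $\mathbb{Z}^n$), so its extension to $M(A\rtimes_\alpha\mathbb{R}^n)$ is not isometric. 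Composing with it therefore gives only $\|i_{A_\Theta}(x)\|\ge\|\pi_\Theta(x)\|=\|x\|_{A_\Theta}$, i.e.\ injectivity \emph{once well-definedness is already known}, never the needed inequality $\|i_{A_\Theta}(x)\|\le\|x\|_{A_\Theta}$. Controlling the multiplier norm from above means controlling $\sum_p\rho(x_p)V_{-\Theta(p)}$ for \emph{every} covariant pair $(\rho,V)$ of $(A,\mathbb{R}^n,\alpha)$, with $V$ non-periodic and hence outside the scope of Proposition \ref{theta_rep}; establishing that such pairs induce representations of $A_\Theta$ dominated by the deformed norm is precisely the analytic content of \cite{Amandip} (done there via the crossed-product/Landstad picture of the deformation). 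You sense that something of this sort is the hard part, but your sketch provides no mechanism for it, so the proof as proposed is incomplete at exactly the point the cited lemma exists to settle.
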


\begin{proposition}[\cite{Kasprzak_rieffel}, Proposition 3.2 and \cite{Amandip}, Section 3.1]\label{prop4}
Let $(A,\mathbb R^n,\alpha)$ be a $C^*$\nobreakdash-dyna\-mical system with  periodic $\alpha$ and unital $A$.  Put $A_{\Theta}$ to be the Rieffel deformation of $A$. There exist a periodic
action $\alpha^{\Theta}$ of $\mathbb R^n$ on $A_{\Theta}$ and an isomorphism $\Psi : A_\Theta \rtimes_{\alpha^\Theta} \mathbb{R}^n \rightarrow A \rtimes_\alpha \mathbb{R}^n$ such that the following diagram is commutative
\[ \begin{tikzcd}
& C^*(\mathbb{R}^n) \simeq C_0(\mathbb{R}^n) \arrow[dl, "i_{\mathbb{R}^n}"'] \arrow[dr, "i_{\mathbb{R}^n}"] & \\
A_\Theta \rtimes_{\alpha^\Theta} \mathbb{R}^n \arrow[rr, "\Psi"] & & A \rtimes_\alpha \mathbb{R}^n
\end{tikzcd} \]
\end{proposition}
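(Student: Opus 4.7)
The plan is to construct $\Psi$ from the embedding $i_{A_{\Theta}}$ furnished by the preceding proposition together with the canonical unitary embedding $i_{\mathbb{R}^n}$, then apply the universal property of the crossed product. First I would define the action $\alpha^{\Theta}$. The remark preceding this statement notes that $A_{\Theta}$ inherits the $\mathbb{Z}^n$-grading of $A$ with $(A_{\Theta})_p=A_p$; this grading corresponds to the periodic action $\alpha^{\Theta}_t(a_p)=e^{2\pi i \langle t,p\rangle}a_p$ on homogeneous $a_p$, which agrees with $\alpha_t$ pointwise but acts on a different algebra.

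The next step is to produce a covariant pair $(i_{A_{\Theta}}, i_{\mathbb{R}^n})$ for $(A_{\Theta}, \mathbb{R}^n, \alpha^{\Theta})$ inside $M(A \rtimes_\alpha \mathbb{R}^n)$. Covariance on homogeneous components follows from the direct computation
\[
i_{\mathbb{R}^n}(t)\, i_{A_{\Theta}}(a_p)\, i_{\mathbb{R}^n}(-t) = i_A(\alpha_t(a_p))\, i_{\mathbb{R}^n}(-\Theta p) = e^{2\pi i \langle t,p\rangle} i_{A_{\Theta}}(a_p) = i_{A_{\Theta}}(\alpha^{\Theta}_t(a_p)),
\]
using the covariance of $(i_A, i_{\mathbb{R}^n})$ and the fact that $i_{\mathbb{R}^n}(-\Theta p)$ commutes with $i_{\mathbb{R}^n}(t)$. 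The universal property of the crossed product then yields a $*$-homomorphism $\Psi\colon A_{\Theta} \rtimes_{\alpha^{\Theta}} \mathbb{R}^n \to M(A \rtimes_\alpha \mathbb{R}^n)$.

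The image of $\Psi$ is the closed linear span of elements $i_A(a_p)\, i_{\mathbb{R}^n}(-\Theta p + f)$ for homogeneous $a_p$ and $f \in C_c(\mathbb{R}^n)$. Since $f \mapsto -\Theta p + f$ is a bijection on this dense subset, the image coincides with $A \rtimes_\alpha \mathbb{R}^n$, so $\Psi$ is surjective into that subalgebra. For injectivity I would exploit the involutivity of Rieffel's construction: $(A_{\Theta})_{-\Theta}$ is canonically isomorphic to $A$, so the same construction applied with $-\Theta$ yields a candidate inverse $\Psi'$, and the composition $\Psi'\Psi=\id$ is verified on the generators $i_{A_{\Theta}}(a_p)\, i_{\mathbb{R}^n}(f)$ by direct calculation.

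The commutativity of the diagram is then immediate from equation \eqref{unital_embedding}: the two slanted arrows $i_{\mathbb{R}^n}$ coincide with $\iota_{A_{\Theta}} \rtimes \mathbb{R}^n$ and $\iota_A \rtimes \mathbb{R}^n$ respectively, and $\Psi$ sends the image of the scalar embedding to itself because $\mathbf{1}_{A_{\Theta}}$ is homogeneous of degree $0$ and $\Theta\cdot 0=0$, so the shift by $-\Theta p$ becomes trivial. The main obstacle is the bijectivity step, which hinges on showing that $i_{A_{\Theta}}$ is a genuine $*$-homomorphism with respect to the oscillatory-integral product on $A_{\Theta}$; this requires careful treatment of the smooth vectors and the Rieffel product formula, and is precisely the technical core carried out in the cited works of Kasprzak and Bhowmick--Neshveyev (Amandip).
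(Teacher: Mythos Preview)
Your proposal is correct and follows essentially the same route as the paper's own sketch: define $\alpha^{\Theta}$ to coincide with $\alpha$ on homogeneous components, verify that $(i_{A_{\Theta}}, i_{\mathbb{R}^n})$ is a covariant pair in $M(A\rtimes_\alpha\mathbb{R}^n)$, invoke the universal property of the crossed product, and then defer the bijectivity to the cited references. You in fact supply more detail than the paper does (the explicit covariance computation, the surjectivity argument via the shift $f\mapsto -\Theta p+f$, and the injectivity via the involutivity $(A_{\Theta})_{-\Theta}\simeq A$), all of which are sound. One minor correction: the reference \cite{Amandip} is Sangha's PhD thesis, not Bhowmick--Neshveyev.
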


Namely, $\alpha^{\Theta}(a)=\alpha(a)$ holds for any $a\in A_p$, $p\in\mathbb{Z}^n$. Then it is easy to verify that $i_{A_{\Theta}}\colon A_{\Theta}\rightarrow M(A\rtimes_{\alpha}\mathbb{R}^n)$ with $i_{\mathbb R^n}\colon\mathbb R^n\rightarrow M(A\rtimes_\alpha \mathbb R^n)$ determine a covariant representation of $(A_{\Theta},\mathbb{R}^n,\alpha^{\Theta})$ in $M(A\rtimes_{\alpha}\mathbb R^n)$. Hence, by the universal property of crossed product we get the corresponding homomorphism
\[
\Psi\colon A_{\Theta}\rtimes_{\alpha^\Theta}\mathbb{R}^n\rightarrow M(A\rtimes_\alpha \mathbb R^n).
\]
In fact, the range of $\Psi$ coincides with $A\rtimes_\alpha \mathbb R^n$ and $\Psi$ defines an isomorphism
\begin{equation}\label{PsiHom}
    \Psi\colon A_{\Theta}\rtimes_{\alpha^\Theta}\mathbb{R}^n\rightarrow A\rtimes_\alpha \mathbb R^n,
\end{equation}
see \cite{Kasprzak_rieffel,Amandip} for more detailed considerations.

The following propositions shows that Rieffel's deformation inherits properties of the non-deformed counterpart.
\begin{proposition}[\cite{Kasprzak_rieffel}, Theorem 3.10]\label{Rieff_nuclear}
A $C^*$-algebra $A_\Theta$ is nuclear if and only if $A$ is nuclear.
\end{proposition}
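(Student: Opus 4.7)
The plan is to reduce the nuclearity comparison to an application of the isomorphism in Proposition \ref{prop4} together with the permanence of nuclearity under crossed products by amenable groups. Since $\mathbb{R}^n$ is amenable, for any $C^*$-dynamical system $(B,\mathbb{R}^n,\beta)$ the full crossed product $B\rtimes_\beta\mathbb{R}^n$ is nuclear if and only if $B$ is nuclear; this is a standard result that I will simply quote (it appears, for instance, in Brown--Ozawa).

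With that in hand, the proof is essentially a three-line chain of equivalences. Applying the quoted fact to the periodic $\mathbb{R}^n$-dynamical systems $(A,\mathbb{R}^n,\alpha)$ and $(A_\Theta,\mathbb{R}^n,\alpha^\Theta)$ gives
\begin{equation*}
A\ \text{nuclear}\ \Longleftrightarrow\ A\rtimes_{\alpha}\mathbb{R}^n\ \text{nuclear}
\quad\text{and}\quad
A_\Theta\ \text{nuclear}\ \Longleftrightarrow\ A_\Theta\rtimes_{\alpha^\Theta}\mathbb{R}^n\ \text{nuclear}.
\end{equation*}
By Proposition \ref{prop4}, the two crossed products are isomorphic, so the two right-hand conditions are equivalent and the result follows.

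The only step with any content is verifying that the amenability principle actually applies, i.e.\ that $\alpha^\Theta$ is a genuine strongly continuous action of $\mathbb{R}^n$ on $A_\Theta$; but this is built into the statement of Proposition \ref{prop4}, since $\alpha^\Theta$ is defined there as a periodic action of $\mathbb{R}^n$. No estimate or computation is required beyond invoking the isomorphism $\Psi$ of \eqref{PsiHom}. I do not foresee any real obstacle; the proposition is essentially a formal consequence of the crossed-product description of Rieffel deformations.
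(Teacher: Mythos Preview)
The paper does not supply its own proof of this proposition; it simply quotes it from \cite{Kasprzak_rieffel}, Theorem~3.10. Your argument is correct and is in fact the standard route to this result (and essentially the one in the cited reference): the isomorphism $\Psi$ of Proposition~\ref{prop4} identifies $A_\Theta\rtimes_{\alpha^\Theta}\mathbb{R}^n$ with $A\rtimes_\alpha\mathbb{R}^n$, and nuclearity passes in both directions along crossed products by the amenable group $\mathbb{R}^n$ (Brown--Ozawa, Theorem~4.2.6, which is exactly the reference \cite{Ozawa} used in the paper). One small caveat: Proposition~\ref{prop4} as stated here assumes $A$ unital and $\alpha$ periodic, so strictly speaking your argument covers that setting; this is adequate for every application in the paper, but if you want the proposition in the generality of the citation you should invoke the non-periodic version of the crossed-product isomorphism from \cite{Kasprzak_rieffel} directly.
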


\begin{proposition}[\cite{Kasprzak_rieffel}, Theorem 3.13]\label{Rieff_K_theory}
For a $C^*$-algebra $A$ one has
\[
K_0(A_\Theta) = K_0(A)\quad \mbox{and}\quad K_1(A_{\Theta})=K_1(A).
\]
\end{proposition}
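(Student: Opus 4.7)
The plan is to reduce the statement to an application of the Connes--Thom isomorphism, using the crossed-product description of Rieffel's deformation given in Proposition \ref{prop4}.

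First I would recall the Connes--Thom isomorphism: for any $C^*$-algebra $B$ equipped with a strongly continuous action $\beta$ of $\mathbb{R}$, there is a natural isomorphism
\[
K_i(B\rtimes_\beta\mathbb{R})\;\simeq\;K_{i+1}(B),\qquad i\in\{0,1\}\pmod{2}.
\]
Iterating this $n$ times for an $\mathbb{R}^n$-action $\gamma$ yields $K_i(B\rtimes_\gamma\mathbb{R}^n)\simeq K_{i+n}(B)$, again with indices taken mod~$2$. The key observation is that this shift depends only on $n$, not on the algebra or the action.

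Next I would apply this to both ends of the isomorphism $\Psi$ of Proposition \ref{prop4}. Starting from the periodic action $\alpha$ on $A$ and the induced action $\alpha^\Theta$ on $A_\Theta$, Connes--Thom gives
\[
K_i(A\rtimes_\alpha\mathbb{R}^n)\;\simeq\;K_{i+n}(A),\qquad
K_i(A_\Theta\rtimes_{\alpha^\Theta}\mathbb{R}^n)\;\simeq\;K_{i+n}(A_\Theta).
\]
Since $\Psi$ is a $*$-isomorphism of the two crossed products, it induces equality of their $K$-groups. Combining the two displays and cancelling the common degree shift by $n$ (which is harmless since it only permutes the two $K$-groups in an identical way on both sides) gives
\[
K_0(A_\Theta)\simeq K_0(A),\qquad K_1(A_\Theta)\simeq K_1(A),
\]
which is the desired statement.

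There is essentially no obstacle here beyond correctly invoking Connes--Thom; the heavy lifting has already been done in Proposition \ref{prop4}, which turns a deformation question into a statement about two $\mathbb{R}^n$-crossed products being canonically isomorphic. The only subtlety worth flagging is making sure that $\alpha^\Theta$ is indeed a continuous $\mathbb{R}^n$-action (not merely a $\mathbb{T}^n$-action), so that Connes--Thom is applicable on the deformed side; this is exactly what is built into the construction of $\alpha^\Theta$ in Proposition \ref{prop4}, where $\alpha^\Theta$ agrees with $\alpha$ on each homogeneous component and hence inherits continuity and periodicity from $\alpha$.
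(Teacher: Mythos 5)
The paper does not prove this proposition itself but quotes it from Kasprzak (Theorem 3.13), whose argument is exactly the one you give: combine the isomorphism $A_\Theta\rtimes_{\alpha^\Theta}\mathbb{R}^n\simeq A\rtimes_\alpha\mathbb{R}^n$ of Proposition \ref{prop4} with the (iterated) Connes--Thom isomorphism, so the degree shift by $n$ is the same on both sides and $K_i(A_\Theta)\simeq K_i(A)$ for $i=0,1$. Your proposal is correct and takes essentially the same route as the cited proof, including the relevant check that $\alpha^\Theta$ is a genuine strongly continuous $\mathbb{R}^n$-action.
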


\subsubsection{Rieffel's deformation of a tensor product }
In this part we apply Rieffel's deformation procedure to a tensor product of two nuclear unital $C^*$-algebras equipped with an action of $\mathbb{T}$.

Let $A$, $B$ be $C^*$-algebras with actions $\alpha$ and $\beta$ of $\mathbb{T}$. Then there is a natural action $\alpha \otimes \beta$ of $\mathbb{T}^2$ on $A \otimes B$ defined as
\[ (\alpha \otimes \beta)_{\varphi_1, \varphi_2}(a \otimes b) = \alpha_{\varphi_1}(a) \otimes \beta_{\varphi_2}(b). \]
Consider the induced gradings on $A$ and $B$:
\[ A = \bigoplus_{p_1 \in \mathbb{Z}} A_{p_1}, \quad B = \bigoplus_{p_2 \in \mathbb{Z}} B_{p_2}. \]
Then the corresponding grading on $A \otimes B$ is
\[ A \otimes B := \bigoplus_{(p_1, p_2)^t \in \mathbb{Z}^2} A_{p_1} \otimes B_{p_2}. \]
In particular, $a \otimes \mathbf 1 \in (A \otimes B)_{(p_1, 0)^t}$ and $\mathbf 1 \otimes b \in (A \otimes B)_{(0, p_2)^t}$, where $a \in A_{p_1}$ and $b \in B_{p_2}$.

Given $q = e^{2 \pi i \varphi_0}$, consider
\begin{equation}\label{theta_q}
\Theta_q = \left( \begin{array}{cc}
    0 & \frac{\varphi_0}{2}  \\
    -\frac{\varphi_0}{2} & 0
\end{array} \right).
\end{equation}
We construct the Rieffel deformation $(A \otimes B)_{\Theta_q}$.

\begin{proposition}\label{Rieffel_tensor_product}
One has the following homomorphisms
\begin{align*}
\eta_A \colon A &\rightarrow (A \otimes B)_{\Theta_q}, \ \eta_A(a) = a \otimes \mathbf 1,
\\
\eta_B \colon B &\rightarrow (A \otimes B)_{\Theta_q}, \ \eta_B(b) = \mathbf 1 \otimes b,
\end{align*}
such that for homogeneous elements $a \in A_{p_1}$ and $b \in B_{p_2}$ it holds
 \[
 \eta_B(b) \cdot_{\Theta_q} \eta_A(a) = e^{2 \pi i p_1 p_2 \varphi_0}  \eta_A(a) \cdot_{\Theta_q} \eta_B(b).
 \]
\end{proposition}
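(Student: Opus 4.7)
The plan is to verify the claimed identities directly on homogeneous components, using formula (\ref{homogeneous_rieff_product}) together with the explicit form of $\Theta_q$, and then extend to all of $A$ and $B$ by linearity and $C^*$-continuity.

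First I would pin down the relevant gradings: if $a\in A_{p_1}$ and $b\in B_{p_2}$, then under the $\mathbb{T}^2$-action $\alpha\otimes\beta$ one has $a\otimes\mathbf 1\in (A\otimes B)_{(p_1,0)^t}$ and $\mathbf 1\otimes b\in (A\otimes B)_{(0,p_2)^t}$. Next I compute the two pairings that will appear. From \eqref{theta_q} one has $\Theta_q(p_1,0)^t=(0,-\tfrac{\varphi_0}{2}p_1)^t$ and $\Theta_q(0,p_2)^t=(\tfrac{\varphi_0}{2}p_2,0)^t$. Therefore the key inner products are $\langle \Theta_q(p_1,0)^t,(p_1',0)^t\rangle=0$, $\langle \Theta_q(0,p_2)^t,(0,p_2')^t\rangle=0$, $\langle \Theta_q(0,p_2)^t,(p_1,0)^t\rangle=\tfrac{\varphi_0 p_1p_2}{2}$, and $\langle \Theta_q(p_1,0)^t,(0,p_2)^t\rangle=-\tfrac{\varphi_0 p_1p_2}{2}$.

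Combining these with \eqref{homogeneous_rieff_product} I would then check that for homogeneous $a\in A_{p_1}$, $a'\in A_{p_1'}$ one has $(a\otimes\mathbf 1)\cdot_{\Theta_q}(a'\otimes\mathbf 1)=(aa')\otimes\mathbf 1$, and analogously for $\eta_B$; this shows that $\eta_A$ and $\eta_B$ preserve products on the algebraic direct sum of homogeneous components. The $*$-operation agrees with the one on $A$, $B$ by the remark preceding the proposition, and $\mathbb{C}$-linearity is obvious. Since $A_\infty$ is dense in $A$ and the deformed product $\cdot_{\Theta_q}$ is a $C^*$-product on $(A\otimes B)_{\Theta_q}$ (hence norm-continuous), the maps $\eta_A$ and $\eta_B$ extend uniquely to $*$-homomorphisms on all of $A$ and $B$.

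Finally, for the braiding relation, applying \eqref{homogeneous_rieff_product} twice gives
\[
\eta_B(b)\cdot_{\Theta_q}\eta_A(a)=e^{2\pi i\cdot\tfrac{\varphi_0 p_1p_2}{2}}(a\otimes b),\quad
\eta_A(a)\cdot_{\Theta_q}\eta_B(b)=e^{-2\pi i\cdot\tfrac{\varphi_0 p_1p_2}{2}}(a\otimes b),
\]
whose ratio is $e^{2\pi i p_1p_2\varphi_0}$, as claimed. The only genuine subtlety is the density/continuity argument needed to promote the product-preservation from homogeneous elements to arbitrary elements of $A$ and $B$; this is where I would be careful to invoke periodicity of $\alpha$ and $\beta$ so that $A_\infty$ contains the algebraic span of all homogeneous components and is norm-dense, together with the fact that $\cdot_{\Theta_q}$ defines the $C^*$-structure of $(A\otimes B)_{\Theta_q}$.
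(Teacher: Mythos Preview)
Your proof is correct and follows essentially the same approach as the paper: both verify multiplicativity and the braiding identity by direct computation on homogeneous elements using formula~\eqref{homogeneous_rieff_product} together with the explicit form of $\Theta_q$. You are slightly more explicit than the paper about the density/continuity step needed to pass from homogeneous elements to all of $A$ and $B$, whereas the paper simply checks the identities on homogeneous elements and declares $\eta_A$, $\eta_B$ to be homomorphisms.
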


\begin{proof}
Recall that
$\mathbb Z^2$-homogeneous
components of $A \otimes B$ and $(A \otimes B)_{\Theta_q}$ coincide and will be considered the same. Let $e_1=(1,0)^t$, $e_2=(0,1)^t$.

Given $a \in A_p$, we have
\[
 \eta_A(a) = a \otimes \mathbf 1 \in ((A \otimes B)_{\Theta_q})_{p\, e_1},
\]
implying that
\[
\eta_A(a)^* = a^* \otimes \mathbf 1 \in ((A \otimes B)_{\Theta_q})_{-p\, e_1}.
\]
Let $a_1 \in A_{p_1}$ and $a_2 \in A_{p_2}$. Then
\[
\eta_A(a_1) \cdot_{\Theta_q} \eta_A(a_2) = e^{2 \pi i \langle p_1 \Theta_q(e_1), p_2 e_1 \rangle} (a_1 \otimes \mathbf 1)(a_2 \otimes \mathbf1) = a_1 a_2 \otimes \mathbf1 = \eta_A(a_1 a_2).
\]
Thus $\eta_A$ is a homomorphism. The arguments for $\eta_B$ are the same.

Given $a \in A_{p_1}$ and $b \in B_{p_2}$, one has
\begin{align*}
\eta_A(a) \cdot_{\Theta_q} \eta_B(b) &= e^{ 2 \pi i \langle \Theta_q(p_1 e_1), p_2 e_2 \rangle } (a \otimes \mathbf1) (\mathbf1 \otimes b) = e^{- \pi i p_1 p_2 \varphi_0 } a \otimes b,
\\
\eta_B(b) \cdot_{\Theta_q} \eta_A(a) &= e^{ 2 \pi i \langle \Theta_q(p_2 e_2), p_1 e_1 \rangle } (\mathbf1 \otimes b)(a \otimes \mathbf1)  = e^{ \pi i p_1 p_2 \varphi_0 } a \otimes b,
\end{align*}
implying that
\[
\eta_B(b) \cdot_{\Theta_q} \eta_A(a) = e^{2 \pi i p_1 p_2 \varphi_0}  \eta_A(a) \cdot_{\Theta_q} \eta_B(b) . \]
\end{proof}

\subsection{Fock representation of $\mathcal E_{n,m}^q$}
In this part we show that the Fock representation of $\mathcal E_{n,m}^q$ is faithful, and apply this result to show that $\mathcal E_{n,m}^q$ is isomorphic to the Rieffel deformation $(\mathcal O_n^{(0)}\otimes\mathcal O_m^{(0)})_{\Theta_q}$, where $\Theta_q$ is specified in  (\ref{theta_q}).
\begin{definition}
The Fock representation of $\mathcal E_{n,m}^q$ is the unique up to unitary equivalence irreducible $*$-representation $\pi_F^q$ determined by the action  on vacuum vector $\Omega$, $||\Omega||=1$,
\[
\pi_F^q(s_j^*)\Omega = 0,\quad \pi_F^q(t_r^*)\Omega =0,\quad j=\overline{1,n},\ r=\overline{1,m}.
\]
\end{definition}

Denote by $\pi_{F,n}$ the Fock representation of $\mathcal{O}_n^{(0)}\subset\mathcal E_{n,m}^q$ acting on the space
\[
\mathcal{F}_n=\mathcal{T}(\mathcal{H}_n)= \mathbb{C}\Omega\oplus\bigoplus_{d=1}^{\infty} \mathcal{H}_n^{\otimes d},\quad \mathcal{H}_n=\mathbb{C}^n,
\]
described by formulas
\begin{align*}
\pi_{F,n}(s_j)\Omega& =e_j,\quad \pi_{F,n}(s_j)e_{i_1}\otimes e_{i_2}\cdots\otimes e_{i_d}=e_j\otimes e_{i_1}\otimes e_{i_2}\cdots\otimes e_{i_d},\\
\pi_{F,n}(s_j^*)\Omega& =0,\quad \pi_{F,n}(s_j^*)e_{i_1}\otimes e_{i_2}\otimes\cdots\otimes e_{i_d}=\delta_{ji_1}e_{i_2}\otimes\cdots\otimes e_{i_d},\quad d\in\mathbb{N},
\end{align*}
where $e_1$, \dots, $e_n$ is the standard orthonormal basis of $\mathcal{H}_n$. Notice that $\pi_{F,n}$ is the unique irreducible faithful representations of $\mathcal{O}_n^{(0)}$, see for example \cite{jsw2}.

Recall that the Fock representation of $\mathcal E_{n,m}^q$ exists for any $q\in\mathbb{C}$, $|q|\le 1$.
For $|q|=1$,  one has $\|T\|=1$, and the kernel of the Fock representation of the Wick algebra $WE_{n,m}^q$ coincides with the $*$-ideal $\mathcal I_2$ generated by $\ker (\mathbf 1+T)$, see Introduction. In our case,
\[
\mathcal{I}_2=\langle t_r s_j-q s_j t_r,\ j=\overline{1,n},\ r=\overline{1,n}\rangle.
\]
Denote by $E_{n,m}^q$ the quotient $WE_{n,m}^q/\mathcal I_2$. Obviously, $\mathcal E_{n,m}^q=C^*(E_{n,m}^q)$. So one has the following corollary of Theorem \ref{thm_fock}.

\begin{proposition}\label{dense_faithful}
The Fock representation of $\mathcal E_{n,m}^q$ exists and is faithful on the $*$-subalgebra $E_{n,m}^q\subset\mathcal E_{n,m}^q$.
\end{proposition}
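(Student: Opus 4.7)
The proof should essentially be a direct application of Theorem~\ref{thm_fock} in our setting. The operator of coefficients $T$ for $WE_{n,m}^q$ is braided and satisfies $\|T\|=|q|=1$, as noted right after its definition in Section~1. Hence by Theorem~\ref{thm_fock} the Fock representation $\pi_F^q$ of the $*$-algebra $WE_{n,m}^q$ exists and its kernel equals the $*$-ideal $\mathcal I_2$ generated by $\ker(\mathbf 1+T)$; in particular $\pi_F^q$ induces a faithful $*$-representation of the quotient $E_{n,m}^q=WE_{n,m}^q/\mathcal I_2$ on the Fock space $\mathcal F$.

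The first thing I would verify is that $\pi_F^q$ actually extends to the $C^*$-algebra $\mathcal E_{n,m}^q$. Since $s_j^*s_j=\mathbf 1$ and $t_r^*t_r=\mathbf 1$, the operators $\pi_F^q(s_j)$ and $\pi_F^q(t_r)$ are genuine isometries on $\mathcal F$, hence bounded of norm one; the universal property of $\mathcal E_{n,m}^q$ then produces the required bounded extension $\pi_F^q\colon\mathcal E_{n,m}^q\to B(\mathcal F)$, settling existence. Next I would identify $\mathcal I_2$ explicitly: since $T$ vanishes on $(\mathbb C^n)^{\otimes 2}$ and $(\mathbb C^m)^{\otimes 2}$, only the mixed subspace $\mathbb C^n\otimes\mathbb C^m\oplus\mathbb C^m\otimes\mathbb C^n$ contributes, and on each two-dimensional invariant block $\mathrm{span}(u\otimes v,\,v\otimes u)$ a direct diagonalisation using $|q|=1$ gives a one-dimensional $\ker(\mathbf 1+T)$ corresponding (under the Wick identification of monomials with tensors) to $t_r s_j - qs_j t_r$. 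The relation $t_rs_j=qs_jt_r$ in $\mathcal E_{n,m}^q$ already derived at the start of Section~3 via the computation $B_{ij}^*B_{ij}=0$ then shows that $\mathcal I_2$ maps to zero in $\mathcal E_{n,m}^q$, yielding a canonical $*$-algebra map $\eta\colon E_{n,m}^q\to\mathcal E_{n,m}^q$.

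Finally, note that the Fock representation of $WE_{n,m}^q$ factors as
\[ WE_{n,m}^q\twoheadrightarrow E_{n,m}^q \xrightarrow{\eta}\mathcal E_{n,m}^q \xrightarrow{\pi_F^q} B(\mathcal F), \]
and the induced map $E_{n,m}^q\to B(\mathcal F)$ is injective by Theorem~\ref{thm_fock}. Injectivity of the composition forces $\eta$ to be injective---so that $E_{n,m}^q$ is genuinely a $*$-subalgebra of $\mathcal E_{n,m}^q$ as asserted---and simultaneously forces $\pi_F^q$ to be injective on $\eta(E_{n,m}^q)$, which is precisely the desired faithfulness. I do not anticipate a serious obstacle: the content is packaged within Theorem~\ref{thm_fock} once the boundedness of the generators and the identification of $\ker(\mathbf 1+T)$ are made explicit; the only point requiring a little care is the sign/conjugation convention in the generator of $\mathcal I_2$, which is however already settled by the direct $C^*$-computation at the start of Section~3.
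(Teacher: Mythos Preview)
Your proposal is correct and follows the same approach as the paper, which simply records the proposition as an immediate corollary of Theorem~\ref{thm_fock} after identifying $\mathcal I_2=\langle t_r s_j-q s_j t_r\rangle$ and setting $E_{n,m}^q=WE_{n,m}^q/\mathcal I_2$. Your write-up is in fact more careful than the paper's one-line justification: you make explicit the boundedness of the generators (needed to pass from the Wick algebra to the universal $C^*$-algebra, a point Theorem~\ref{thm_fock} warns is not automatic when $\|T\|=1$) and spell out the factorisation argument that simultaneously shows $\eta$ is injective and $\pi_F^q$ is faithful on its image.
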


Below we give an explicit formula for $\pi_F (s_j)$,
$\pi_F (t_r)$. Consider the Fock representations $\pi_{F,n}$ and $\pi_{F,m}$ of $*$-subalgebras $C^*(\{s_1, \ldots, s_n\})=\mathcal O_n^{(0)}\subset\mathcal E_{n,m}^q$ and $C^*(\{t_1, \ldots, t_m\})=\mathcal{O}_m^{(0)}\subset \mathcal E_{n,m}^q$ respectively. Denote by $\Omega_n\in\mathcal F_n$ and $\Omega_m\in\mathcal F_m$ the corresponding vacuum vectors.
\begin{theorem}
The Fock representation $\pi_F^q$ of $\mathcal E_{n,m}^q$ acts on the space $\mathcal{F}=\mathcal{F}_n\otimes\mathcal{F}_m$ as follows
\begin{align*}
\pi_F^q(s_j)&=\pi_{F,n} (s_j)\otimes d_m(q^{-\frac{1}{2}}),\quad j = \overline{1,n},\\
\pi_F^q(t_r)&= d_n(q^{\frac{1}{2}})\otimes \pi_{F,m} (t_r),\quad r=\overline{1,m},
\end{align*}
where $d_k(\lambda)$ acts on $\mathcal F_k$, $k=n,m$ by
\[
d_k(\lambda)\Omega_k =\Omega_k,\quad d_k(\lambda)X=\lambda^l X,\quad X\in \mathcal{H}_k^{\otimes l},\quad l\in\mathbb{N}.
\]
\end{theorem}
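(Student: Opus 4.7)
The plan is to verify that the formulas on the right-hand side define operators on $\mathcal{F}_n\otimes\mathcal{F}_m$ that satisfy the defining relations of $\mathcal{E}_{n,m}^q$, possess a unique (up to a phase) unit vector annihilated by all adjoints, and are cyclic on that vector. Then uniqueness of the Fock representation forces unitary equivalence with the formulas given, so we identify the two. Put, for brevity, $S_j=\pi_{F,n}(s_j)\otimes d_m(q^{-1/2})$ and $T_r=d_n(q^{1/2})\otimes\pi_{F,m}(t_r)$.

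First I would check the relations \eqref{baseqrel}. Since $|q|=1$ one has $\overline{q^{\pm 1/2}}=q^{\mp 1/2}$, so $d_k(q^{1/2})^*=d_k(q^{-1/2})$ and these diagonal operators are unitary. Therefore $S_i^*S_j=\pi_{F,n}(s_i^*s_j)\otimes d_m(q^{1/2})d_m(q^{-1/2})=\delta_{ij}\mathbf 1$ and similarly $T_r^*T_s=\delta_{rs}\mathbf 1$. The nontrivial relation is the cross one. Here the key computation is the commutation of the diagonals with the shifts: if $X\in\mathcal H_n^{\otimes l}$, then $\pi_{F,n}(s_j^*)X\in\mathcal H_n^{\otimes(l-1)}$ and a direct comparison yields
\[
\pi_{F,n}(s_j^*)\,d_n(q^{1/2})=q^{1/2}\,d_n(q^{1/2})\,\pi_{F,n}(s_j^*),
\qquad
d_m(q^{1/2})\,\pi_{F,m}(t_r)=q^{1/2}\,\pi_{F,m}(t_r)\,d_m(q^{1/2}),
\]
because $s_j^*$ lowers and $t_r$ raises the tensor degree by one. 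Consequently
\[
S_j^* T_r
=\bigl(\pi_{F,n}(s_j^*)d_n(q^{1/2})\bigr)\otimes\bigl(d_m(q^{1/2})\pi_{F,m}(t_r)\bigr)
=q\,\bigl(d_n(q^{1/2})\pi_{F,n}(s_j^*)\bigr)\otimes\bigl(\pi_{F,m}(t_r)d_m(q^{1/2})\bigr)
=qT_rS_j^*,
\]
as required. By the universal property of $\mathcal E_{n,m}^q$ the assignment $s_j\mapsto S_j$, $t_r\mapsto T_r$ extends to a $*$-homomorphism $\rho\colon\mathcal E_{n,m}^q\to B(\mathcal F_n\otimes\mathcal F_m)$.

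Next I would exhibit the vacuum vector. Set $\Omega:=\Omega_n\otimes\Omega_m$. Since $d_m(q^{1/2})\Omega_m=\Omega_m$ and $\pi_{F,n}(s_j^*)\Omega_n=0$, we obtain $S_j^*\Omega=0$, and $T_r^*\Omega=0$ similarly. Cyclicity is then established by induction: direct computation gives $T_{j_1}\cdots T_{j_l}\Omega=\Omega_n\otimes f_{j_1}\otimes\cdots\otimes f_{j_l}$, and applying the $S_{i}$'s brings out only a scalar phase $q^{-kl/2}$, so $S_{i_1}\cdots S_{i_k}T_{j_1}\cdots T_{j_l}\Omega$ is a nonzero scalar multiple of the basis vector $(e_{i_1}\otimes\cdots\otimes e_{i_k})\otimes(f_{j_1}\otimes\cdots\otimes f_{j_l})$. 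Since these basis vectors span a dense subspace, $\Omega$ is cyclic.

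Finally, having produced a cyclic representation $\rho$ with a unit vector $\Omega$ satisfying $\rho(s_j^*)\Omega=\rho(t_r^*)\Omega=0$, we invoke the uniqueness of the Fock representation (which, as recalled before Proposition \ref{dense_faithful}, follows from the standard Wick-algebra framework — any cyclic $*$-representation annihilating the adjoints on the cyclic vector is unitarily equivalent to $\pi_F^q$). The main obstacle — and really the only nontrivial bookkeeping — is the commutation argument with the diagonal operators $d_k(q^{1/2})$ that produces the factor $q$ in the cross relation; everything else is formal once the degree-shifting behavior of $d_k(\lambda)$ is understood.
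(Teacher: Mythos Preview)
Your proof is correct and follows essentially the same approach as the paper: verify that the given operators satisfy the defining relations (the paper just says ``direct calculation''; you spell out the degree-shifting commutation with $d_k(q^{1/2})$), exhibit $\Omega=\Omega_n\otimes\Omega_m$ as a vacuum vector, and invoke uniqueness of the Fock representation. The only minor difference is that the paper argues irreducibility of $\pi_F^q$ (from irreducibility of each factor $\pi_{F,k}$) rather than cyclicity of $\Omega$, but either property suffices to pin down the Fock representation in the Wick-algebra framework.
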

\begin{proof}
It is a direct calculation to verify that the operators defined above satisfy the relations of $\mathcal E_{n,m}^q$. Since $\pi_{F,k}$ is irreducible on $\mathcal{F}_k$, $k=m,n$, the representation $\pi_F^q$ is irreducible on $\mathcal{F}_n\otimes\mathcal{F}_m$. Finally put $\Omega=\Omega_n\otimes\Omega_m$, then obviously
\[
\pi_F^q(s_j^*)\Omega=0,\ \mbox{and}\ \pi_F^q(t_r^*)\Omega=0,\quad j=\overline{1,n}, r=\overline{1,m}
\]
Thus $\pi_F^q$ is the Fock representation of $\mathcal E_{n,m}^q$.
\end{proof}
\begin{remark}\label{rem_fock}
In some cases, it will be more convenient to present the operators of the Fock representation of $\mathcal E_{n,m}^q$ in one of the alternative forms,
\begin{align*}
\pi_F^q(s_j)&=\pi_{F,n} (s_j)\otimes\mathbf{1}_{\mathcal F_m},\quad j = \overline{1,n},
\\
\pi_F^q(t_r)&= d_n(q)\otimes \pi_{F,m} (t_r),\quad r=\overline{1,m},
\end{align*}
or
\begin{align*}
\pi_F^q(s_j)&=\pi_{F,n} (s_j)\otimes d_m(q^{-1}),\quad j = \overline{1,n},\\
\pi_F^q(t_r)&= \mathbf{1}_{\mathcal F_n}\otimes \pi_{F,m} (t_r),\quad r=\overline{1,m},
\end{align*}
which are obviously unitary equivalent to the one presented in the statement above.
\end{remark}

Consider the action $\alpha$ of $\mathbb{T}^2$ on $\mathcal{E}_{n,m}^q$,
\[
\alpha_{\varphi_1, \varphi_2}(s_i) = e^{2\pi i \varphi_1} s_i, \quad \alpha_{\varphi_1, \varphi_2}(t_r) = e^{2 \pi i \varphi_2} t_r.
\]
Recall, see Section \ref{group_action}, that the conditional expectation, associated to $\alpha$ is denoted by $E_\alpha$.

\begin{proposition}\label{enmq_fixpoint}
The fixed point $C^*$-subalgebra $(\mathcal E_{n,m}^q)^\alpha$ is an AF-algebra.
\end{proposition}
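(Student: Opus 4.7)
The plan is to realize $(\mathcal{E}_{n,m}^q)^\alpha$ as the norm closure of a filtered union of finite-dimensional $C^*$-subalgebras. First, I would put every element of the dense $*$-subalgebra $E_{n,m}^q$ into a normal form $\sum c_i\, s_{\mu_i} s_{\nu_i}^* t_{\alpha_i} t_{\beta_i}^*$, where the $\mu_i,\nu_i$ are words in $\{1,\dots,n\}$ and the $\alpha_i,\beta_i$ are words in $\{1,\dots,m\}$: the relations $s_j^* t_r=q\,t_r s_j^*$, $t_r s_j=q\,s_j t_r$, together with their adjoints (using $|q|=1$), allow one to move any ``$s$-letter'' past any ``$t$-letter'' at the cost of a single power of $q$, after which the $s$-only and $t$-only factors can be put into Cuntz-Toeplitz normal form. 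The $\mathbb T^2$-grading induced by $\alpha$ then picks out $(E_{n,m}^q)^\alpha$ as the linear span of those normal monomials satisfying $|\mu|=|\nu|$ and $|\alpha|=|\beta|$, and continuity of the conditional expectation $E_\alpha$ yields $(\mathcal{E}_{n,m}^q)^\alpha=\overline{(E_{n,m}^q)^\alpha}$.

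For each $(k,l)\in\mathbb Z_+^2$ I then set
\[
B_{k,l}=\mathrm{span}\bigl\{\, s_\mu s_\nu^* t_\alpha t_\beta^* : |\mu|=|\nu|\le k,\ |\alpha|=|\beta|\le l\,\bigr\}.
\]
The key calculation is that the phase produced when commuting $t_\alpha t_\beta^*$ past $s_{\mu'}s_{\nu'}^*$ equals $q^{(|\alpha|-|\beta|)(|\mu'|-|\nu'|)}$, which is trivial under the standing length constraints. Consequently the product of two generators of $B_{k,l}$ decouples into an $s$-only product in the Cuntz-Toeplitz $C^*$-subalgebra $\mathcal O_n^{(0)}\subset\mathcal E_{n,m}^q$ and an independent $t$-only product in $\mathcal O_m^{(0)}\subset\mathcal E_{n,m}^q$, and standard Cuntz-Toeplitz bookkeeping keeps each inside the corresponding length-bounded piece. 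Hence $B_{k,l}$ is a $*$-subalgebra of $\mathcal{E}_{n,m}^q$, it is spanned by finitely many monomials and therefore finite-dimensional, and any finite-dimensional $*$-subalgebra of a $C^*$-algebra is automatically a $C^*$-subalgebra.

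The family $\{B_{k,l}\}$ is increasing in each index and $\bigcup_{k,l}B_{k,l}=(E_{n,m}^q)^\alpha$; passing to the diagonal sequence $B_N:=B_{N,N}$ exhibits $(\mathcal{E}_{n,m}^q)^\alpha$ as the closure of an increasing sequence of finite-dimensional $C^*$-subalgebras, i.e., as an AF algebra. As a sanity check, Remark \ref{rem_fock} yields $\pi_F^q(s_\mu s_\nu^* t_\alpha t_\beta^*)=\pi_{F,n}(s_\mu s_\nu^*)\otimes\pi_{F,m}(t_\alpha t_\beta^*)$ whenever $|\alpha|=|\beta|$, identifying $B_{k,l}$ with a finite-dimensional piece of the AF tensor product of the fixed-point subalgebras of $\mathcal O_n^{(0)}$ and $\mathcal O_m^{(0)}$ under their gauge actions; this pins down the inductive limit structure explicitly. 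The principal technical nuisance is exactly the $q$-phase bookkeeping in the normal-ordering step; once one verifies that all $q$-phases cancel on the fixed point subalgebra, the AF conclusion follows from standard Cuntz-Toeplitz arguments.
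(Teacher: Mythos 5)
Your proof is correct and follows essentially the same route as the paper's: apply the conditional expectation $E_\alpha$ to the dense span of monomials $s_\mu s_\nu^* t_\alpha t_\beta^*$ to identify $(\mathcal E_{n,m}^q)^\alpha$ with the closed span of the balanced ones ($|\mu|=|\nu|$, $|\alpha|=|\beta|$), note that the $q$-phases cancel on these so the $s$-part and $t$-part commute, and exhibit the fixed-point algebra as the closure of an increasing family of finite-dimensional $*$-subalgebras. The only difference is bookkeeping: you use the length-bounded spans $B_{k,l}$ with $|\mu|=|\nu|\le k$, $|\alpha|=|\beta|\le l$, whereas the paper sums products of exact-length pieces with $k_1+k_2=k$; your variant is, if anything, slightly tidier, since the bounded-length spans are manifestly nested and closed under multiplication.
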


\begin{proof}
The family $\{s_{\mu_1}s_{\nu_1}^*t_{\mu_2}t_{\nu_2}^*,\ \mu_1,\nu_1\in\Lambda_n,\ \mu_2,\nu_2\in\Lambda_m\}$ is dense in $\mathcal E_{n,m}^q$, thus the family
$\{E_{\alpha}(s_{\mu_1}s_{\nu_1}^*t_{\mu_2}t_{\nu_2}^*),\ \mu_1,\nu_1\in\Lambda_n,\ \mu_2,\nu_2\in\Lambda_m\}$ is dense in $(\mathcal E_{n,m}^q)^\alpha$. Further,
\[
E_{\alpha}(s_{\mu_1}s_{\nu_1}^*t_{\mu_2}t_{\nu_2}^*)=0,\ \mbox{if}\
|\mu_1|\ne |\nu_1|\ \mbox{or}\ |\mu_2|\ne |\nu_2|,
\]
and $E_{\alpha}(s_{\mu_1}s_{\nu_1}^*t_{\mu_2}t_{\nu_2}^*)=s_{\mu_1}s_{\nu_1}^*t_{\mu_2}t_{\nu_2}^*$ otherwise. Hence
\[
(\mathcal E_{n,m}^q)^\alpha=c.l.s.\{s_{\mu_1}s_{\nu_1}^*t_{\mu_2}t_{\nu_2}^*,\ |\mu_1|=|\nu_1|, |\mu_2|=|\nu_2|,\ \mu_1,\nu_1\in\Lambda_n,\
\mu_2,\nu_2\in\Lambda_m\}.
\]
Put $\mathcal A_{1,0}^0=\mathbb C$,
\[
\mathcal A_{1,0}^{k_1}=c.l.s.\{s_{\mu_1}s_{\nu_1}^*, |\mu_1|=|\nu_1|=k_1,\ \mu_1,\nu_1\in\Lambda_n\},\quad k_1\in\mathbb N,
\]
and $\mathcal A_{2,0}^0=\mathbb C$,
\[
\mathcal A_{2,0}^{k_2}=c.l.s.\{t_{\mu_2}t_{\nu_2}^*, |\mu_2|=|\nu_2|=k_2,\ \mu_1,\nu_1\in\Lambda_n\},\quad k_2\in\mathbb N.
\]
It is easy to see that $xy=yx$, $x\in\mathcal A_{1,0}^{k_1}$, $y\in\mathcal A_{2,0}^{k_2}$. Let
\[
\mathcal A_{k}^\alpha=\sum_{k_1+k_2=k}\mathcal A_{1,0}^{k_1}\cdot\mathcal A_{2,0}^{k_2}.
\]
Evidently $\mathcal A_k^\alpha$ is a finite-dimensional subalgebra in $(\mathcal E_{n,m}^q)^\alpha$ for any $k\in\mathbb Z_{+}$ and
\[
(\mathcal E_{n,m}^q)^{\alpha}=\overline{\bigcup_{k\in\mathbb Z_{+}}\mathcal A_{k}^\alpha}.\qedhere
\]
\end{proof}

\begin{remark}
Define unitary operators $U_{\varphi_1,\varphi_2}$,  $(\varphi_1,\varphi_2)\in\mathbb{T}^2$ on $\mathcal{F}_n \otimes \mathcal{F}_m$ as follows:
\[
U_{\varphi_1, \varphi_2}= d_n(e^{2\pi i\varphi_1}) \otimes d_m(e^{2\pi i\varphi_2}).
\]
Then $(\pi_F^q, U_{\varphi_1,\varphi_2})$ is a covariant representation of ($\mathcal{E}_{n,m}^q$, $\mathbb{T}^2$, $\alpha$).
\end{remark}

\begin{theorem}
The Fock representation $\pi_F^q$ of $\mathcal E_{m,n}^q$ is faithful.
\end{theorem}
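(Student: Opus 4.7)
The plan is to exploit the equivariance trick from Proposition \ref{faith_fix}(2) in order to reduce the question of faithfulness on all of $\mathcal{E}_{n,m}^q$ to faithfulness on the fixed point subalgebra $(\mathcal{E}_{n,m}^q)^\alpha$. Once there, the AF-structure established in Proposition \ref{enmq_fixpoint}, combined with the algebraic faithfulness on the dense $*$-subalgebra $E_{n,m}^q$ provided by Proposition \ref{dense_faithful}, will yield the conclusion through a standard finite-dimensional approximation argument.

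More concretely, the first step is to record that the remark preceding this theorem exhibits $(\pi_F^q, U_{\varphi_1,\varphi_2})$ as a covariant representation of $(\mathcal{E}_{n,m}^q, \mathbb{T}^2, \alpha)$. Consequently $\pi_F^q$ intertwines $\alpha$ with the inner $\mathbb{T}^2$-action on $\pi_F^q(\mathcal{E}_{n,m}^q)$ given by conjugation with $U_{\varphi_1,\varphi_2}$, and hence is an equivariant $*$-homomorphism in the sense required by Proposition \ref{faith_fix}(2). It thus suffices to prove that $\pi_F^q$ is injective on $(\mathcal{E}_{n,m}^q)^\alpha$.

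By Proposition \ref{enmq_fixpoint}, $(\mathcal{E}_{n,m}^q)^\alpha$ equals the norm closure of the increasing union $\bigcup_{k \ge 0} \mathcal{A}_k^\alpha$, where each $\mathcal{A}_k^\alpha$ is a finite-dimensional $C^*$-subalgebra linearly spanned by monomials $s_{\mu_1}s_{\nu_1}^* t_{\mu_2} t_{\nu_2}^*$ with $|\mu_1|=|\nu_1|$ and $|\mu_2|=|\nu_2|$. These monomials belong to the $*$-algebra $E_{n,m}^q \subset \mathcal{E}_{n,m}^q$, so $\mathcal{A}_k^\alpha \subset E_{n,m}^q$ for every $k$. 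Proposition \ref{dense_faithful} then implies that $\pi_F^q$ is injective on each $\mathcal{A}_k^\alpha$. Any injective $*$-homomorphism out of a finite-dimensional $C^*$-algebra is isometric, so $\pi_F^q$ is isometric on $\bigcup_k \mathcal{A}_k^\alpha$, and therefore, by continuity, on the closure $(\mathcal{E}_{n,m}^q)^\alpha$. Combining this with the reduction from the first paragraph completes the argument.

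I do not anticipate a serious obstacle: the only slightly delicate point is verifying that Proposition \ref{faith_fix}(2) applies, which amounts to noting that conjugation by the continuous unitary representation $U_{\varphi_1,\varphi_2}$ endows $\pi_F^q(\mathcal{E}_{n,m}^q)$ with a compatible $\mathbb{T}^2$-action making $\pi_F^q$ equivariant. After that, the proof is essentially mechanical — the substantive content has been absorbed into the previously established facts that $\pi_F^q$ is faithful on $E_{n,m}^q$ and that $(\mathcal{E}_{n,m}^q)^\alpha$ is AF.
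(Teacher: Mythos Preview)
Your proposal is correct and follows essentially the same approach as the paper: reduce faithfulness to the fixed point subalgebra via equivariance (Proposition \ref{faith_fix}(2)), then use the AF-structure of $(\mathcal{E}_{n,m}^q)^\alpha$ together with Proposition \ref{dense_faithful} on each finite-dimensional piece $\mathcal{A}_k^\alpha \subset E_{n,m}^q$. The only cosmetic difference is that you obtain the $\mathbb{T}^2$-action on $\pi_F^q(\mathcal{E}_{n,m}^q)$ directly from the covariant pair $(\pi_F^q, U_{\varphi_1,\varphi_2})$ in the preceding remark, whereas the paper re-derives it via uniqueness of the Fock representation; your route is slightly more direct but otherwise identical.
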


\begin{proof}
Consider the action $\alpha^\pi$ of $\mathbb T^2$ on $\pi_F^q(\mathcal E_{n,m}^q)$ induced by the action $\alpha$ on $\mathcal{E}_{n,m}^q$
\begin{align*}
\alpha_{\varphi_1,\varphi_2}^\pi \pi_F^q(s_j)&=e^{2\pi i\varphi_1}\pi_F^q(s_j):=S_{j,\varphi_1,\varphi_2},
\\
\alpha_{\varphi_1,\varphi_2}^\pi \pi_F^q(t_r)&=e^{2\pi i\varphi_2}\pi_F^q(t_r):=T_{r,\varphi_1,\varphi_2}.
\end{align*}
To see that $\alpha^q_{\varphi_1,\varphi_2}$ is an automorphism of $\pi_F^q (\mathcal E_{n,m}^q)$ we notice that the operators $S_{j,\varphi_1,\varphi_2}$, $T_{r,\varphi_1,\varphi_2}$, satisfy the defining relations in $\mathcal E_{n,m}^q$, and
\[
S_{j,\varphi_1,\varphi_2}^*\Omega=
T_{r,\varphi_1,\varphi_2}^*\Omega=0.
\]
Evidently the family $\{S_{j,\varphi_1,\varphi_2},S_{j,\varphi_1,\varphi_2}^*,
T_{r,\varphi_1,\varphi_2},
T_{r,\varphi_1,\varphi_2}^*\}_{j=1}^n{}_{r=1}^m$ is irreducible and therefore defines the Fock representation of $\mathcal{E}_{n,m}^q$. Thus, by the uniqueness of the Fock representation, there exists a unitary $V_{\varphi_1,\varphi_2}$ on $\mathcal F = \mathcal{F}_n \otimes \mathcal{F}_m$ such that for any $j=\overline{1,n}$, $r=\overline{1,m}$, one has
\[
S_{j,\varphi_1,\varphi_2}=\mathsf{Ad}(V_{\varphi_1,\varphi_2})\circ \pi_F^q (S_j),\quad
T_{r,\varphi_1,\varphi_2}=\mathsf{Ad}(V_{\varphi_1,\varphi_2})\circ \pi_F^q (T_r),
\]
implying that $\alpha^\pi_{\varphi_1,\varphi_2}$ is an automorphism of $\pi_F^q (\mathcal E_{n,m}^q)$ for any  $(\varphi_1,\varphi_2)\in\mathbb T^2$. Evidently,
\[
\pi_F^q\colon\mathcal E_{n,m}^q\rightarrow \pi_F^q (\mathcal E_{n,m}^q)
\]
is equivariant with respect to $\alpha$ and $\alpha^\pi$.

By Proposition \ref{faith_fix}, the representation $\pi_F^q$ is faithful on $\mathcal{E}_{n,m}^q$ if and only if it is faithful on $(\mathcal E_{n,m}^q)^\alpha$. Further, by Proposition \ref{enmq_fixpoint},
\[
(\mathcal E_{n,m}^q)^{\alpha}=\overline{\bigcup_{k\in\mathbb Z_{+}}\mathcal A_{k}^\alpha}.
\]
Evidently $\mathcal A_{k}^\alpha\subset E_{n,m}^q$, $k\in\mathbb Z_{+}$. Hence by Proposition \ref{dense_faithful}, $\pi_{F}^q$ is faithful on $\mathcal A_k^{\alpha}$ for any $k\in\mathbb Z_{+}$. It is an easy exercise to show that a representation of an AF-algebra is injective if and only if it is injective on the finite-dimensional subalgebras.
\end{proof}

The next step is to construct a representation of $(\mathcal O_n^{(0)}\otimes\mathcal O_m^{(0)})_{\Theta_q}$ corresponding to the Fock representation $\pi_{F,n}\otimes \pi_{F,m}$ of $\mathcal O_n^{(0)}\otimes\mathcal O_m^{(0)}$.

The pair $(\pi_{F,n} \otimes \pi_{F,m},\ U_{\varphi_1,\varphi_2})$ determines a covariant representation of $(\mathcal{O}_n^0 \otimes \mathcal{O}_m^0,\mathbb{T}^2,\alpha)$, where as above
\[
\alpha_{\varphi_1,\varphi_2}(s_j\otimes\mathbf 1)=e^{2\pi i\varphi_1} (s_j\otimes\mathbf 1),\quad \alpha_{\varphi_1,\varphi_2}(\mathbf 1\otimes t_r)=
e^{2\pi i\varphi_2} (\mathbf 1\otimes t_r).
\]
Notice that for $p=(p_1,p_2)^t\in\mathbb{Z}_{+}^2$, the subspace $\mathcal{H}_n^{\otimes p_1}\otimes \mathcal{H}_m^{\otimes p_2}$ is the $(p_1,p_2)^t$-homogeneous component of $\mathcal F$ related to the action of $U_{\varphi_1,\varphi_2}$, and $(\mathcal F)_p=\{0\}$ for any $p\in\mathbb Z^2\setminus\mathbb Z_{+}^2 $.

Recall also that $\widehat{s}_j=s_j\otimes \mathbf{1}$ is contained in $e_1=(1,0)^t$-homogeneous component and $\widehat{t}_r=\mathbf 1\otimes t_r$ is in $e_2=(0,1)^t$-homogeneous component with respect to $\alpha$. Now one can apply Proposition \ref{theta_rep}. Namely, given $\xi=\xi_1\otimes\xi_2 \in \mathcal{H}_n^{\otimes p_1} \otimes \mathcal{H}_m^{\otimes p_2}$ one gets
\begin{align*}
(\pi_{F,n} \otimes \pi_{F,m})_{\Theta_q}(\widehat{s}_j)\, \xi &= e^{2 \pi i \langle\Theta_q\, e_1,\, p  \rangle }\, \pi_{F,n} \otimes \pi_{F,m}(\widehat{s}_j)\, \xi = \\
&= \pi_{F,n}(s_j)\xi_1 \otimes e^{-\pi i\, p_2\, \varphi_0}\, \xi_2=(\pi_{F,n}(s_j)\otimes d_m(q^{-\frac{1}{2}}))\, \xi,
\end{align*}
and
\begin{align*}
(\pi_{F,n} \otimes \pi_{F,m})_{\Theta_q}(\widehat{t}_r)\, \xi &= e^{2 \pi i \langle\Theta_q\, e_2,\, p  \rangle }\, \pi_{F,n} \otimes \pi_{F,m}(\widehat{t}_r)\xi = \\
&=e^{\pi i\, p_1\, \varphi_0}\, \xi_1 \otimes
\pi_{F,m}(t_r)\, \xi_2= ( d_n(q^{\frac{1}{2}})\otimes \pi_{F,m}(t_r))\,\xi.
\end{align*}
Notice that for any $j=\overline{1,n}$, and $r=\overline{1,m}$,
\[
(\pi_{F,n}\otimes \pi_{F,m})_{\Theta_q}(\widehat{s}_j^*)\Omega=0,\quad
(\pi_{F,n}\otimes \pi_{F,m})_{\Theta_q}(\widehat{t}_r^*)\Omega=0.
\]

\begin{theorem}\label{cuntoep_rieff}
For any $q\in\mathbb{C}$, $|q|=1$, the $C^*$-algebra $\mathcal E_{n,m}^q$ is isomorphic to $(\mathcal{O}_n^{(0)}\otimes\mathcal{O}_m^{(0)})_{\Theta_q}$.
\end{theorem}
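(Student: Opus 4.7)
The plan is to build the isomorphism from $\mathcal{E}_{n,m}^q$ to $(\mathcal{O}_n^{(0)}\otimes\mathcal{O}_m^{(0)})_{\Theta_q}$ via the universal property of $\mathcal{E}_{n,m}^q$, and then prove injectivity by comparing two faithful representations on $\mathcal F_n\otimes\mathcal F_m$.

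First I would verify that the elements $\widehat s_j=s_j\otimes\mathbf 1$ and $\widehat t_r=\mathbf 1\otimes t_r$, regarded now as homogeneous elements of $(\mathcal{O}_n^{(0)}\otimes\mathcal{O}_m^{(0)})_{\Theta_q}$, satisfy the defining relations of $\mathcal E_{n,m}^q$ under the product $\cdot_{\Theta_q}$. Since $\widehat s_j$ and $\widehat s_i^{\,*}$ have $\mathbb Z^2$-degrees $e_1$ and $-e_1$, skew-symmetry of $\Theta_q$ gives $\langle\Theta_q(-e_1),e_1\rangle=0$, and (\ref{homogeneous_rieff_product}) yields $\widehat s_i^{\,*}\cdot_{\Theta_q}\widehat s_j=s_i^*s_j\otimes\mathbf 1=\delta_{ij}\mathbf 1$; the analogous computation gives $\widehat t_r^{\,*}\cdot_{\Theta_q}\widehat t_l=\delta_{rl}\mathbf 1$. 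For the cross relation, using $\Theta_q(-e_1)=(\varphi_0/2)e_2$ and $\Theta_q(e_2)=(\varphi_0/2)e_1$ one computes
\[
\widehat s_j^{\,*}\cdot_{\Theta_q}\widehat t_r=e^{\pi i\varphi_0}(s_j^*\otimes t_r),\qquad \widehat t_r\cdot_{\Theta_q}\widehat s_j^{\,*}=e^{-\pi i\varphi_0}(s_j^*\otimes t_r),
\]
so $\widehat s_j^{\,*}\cdot_{\Theta_q}\widehat t_r=q\,\widehat t_r\cdot_{\Theta_q}\widehat s_j^{\,*}$, matching the defining relation of $\mathcal E_{n,m}^q$. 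By universality, the assignment $s_j\mapsto\widehat s_j$, $t_r\mapsto\widehat t_r$ extends to a $*$-homomorphism $\varphi\colon\mathcal E_{n,m}^q\to(\mathcal{O}_n^{(0)}\otimes\mathcal{O}_m^{(0)})_{\Theta_q}$. Surjectivity is immediate: by (\ref{homogeneous_rieff_product}) every word in $\widehat s_j,\widehat s_j^{\,*},\widehat t_r,\widehat t_r^{\,*}$ with product $\cdot_{\Theta_q}$ is a scalar multiple of the corresponding word with the original product, and thus the two $*$-algebras of such words have the same dense span.

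For injectivity I would compose $\varphi$ with the Rieffel-deformed representation $(\pi_{F,n}\otimes\pi_{F,m})_{\Theta_q}$ of $(\mathcal O_n^{(0)}\otimes\mathcal O_m^{(0)})_{\Theta_q}$, obtained from Proposition~\ref{theta_rep} applied to the covariant pair $(\pi_{F,n}\otimes\pi_{F,m},U_{\varphi_1,\varphi_2})$. The calculations immediately preceding the theorem identify this representation on generators as
\[
(\pi_{F,n}\otimes\pi_{F,m})_{\Theta_q}(\widehat s_j)=\pi_{F,n}(s_j)\otimes d_m(q^{-\frac12}),\quad (\pi_{F,n}\otimes\pi_{F,m})_{\Theta_q}(\widehat t_r)=d_n(q^{\frac12})\otimes\pi_{F,m}(t_r),
\]
which coincides with $\pi_F^q$ on the generators. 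Hence $(\pi_{F,n}\otimes\pi_{F,m})_{\Theta_q}\circ\varphi=\pi_F^q$. Since $\pi_{F,n}\otimes\pi_{F,m}$ is a faithful representation of the nuclear $C^*$-algebra $\mathcal O_n^{(0)}\otimes\mathcal O_m^{(0)}$, the second clause of Proposition~\ref{theta_rep} makes $(\pi_{F,n}\otimes\pi_{F,m})_{\Theta_q}$ faithful; combined with the faithfulness of $\pi_F^q$ established in the preceding theorem, this forces $\varphi$ to be injective.

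I expect the main technical obstacle to be purely the phase bookkeeping, i.e.\ tracking the $\mathbb Z^2$-degrees of composite generators and matching the $e^{\pm\pi i\varphi_0}$ factors produced by (\ref{homogeneous_rieff_product}) with the $q^{\pm 1/2}$ factors appearing in the formulas for $\pi_F^q$. Conceptually the proof reduces to the observation that the Fock representation of $\mathcal E_{n,m}^q$ is nothing but the Rieffel deformation of the tensor product Fock representation of $\mathcal O_n^{(0)}\otimes\mathcal O_m^{(0)}$.
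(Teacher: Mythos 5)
Your proposal is correct and follows essentially the same route as the paper: verify via the homogeneous Rieffel product (\ref{homogeneous_rieff_product}) (equivalently, Proposition~\ref{Rieffel_tensor_product}) that $\widehat s_j$, $\widehat t_r$ satisfy the defining relations, invoke universality for a surjection, and deduce injectivity from the identity $(\pi_{F,n}\otimes\pi_{F,m})_{\Theta_q}\circ\varphi=\pi_F^q$ together with faithfulness of $\pi_F^q$. The only remark is that faithfulness of $(\pi_{F,n}\otimes\pi_{F,m})_{\Theta_q}$ itself is not needed; injectivity of the composite $\pi_F^q$ already forces $\varphi$ to be injective.
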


\begin{proof}
Proposition \ref{Rieffel_tensor_product} implies that elements
$(\mathcal O_n\otimes\mathcal O_m)_{\Theta_q}\ni\widehat{s}_j=s_j\otimes\mathbf 1$ and
$(\mathcal O_n\otimes\mathcal O_m)_{\Theta_q}\ni\widehat{t}_r=\mathbf 1\otimes t_r$ satisfy
\[
\widehat{s}_j^*\widehat{s}_i=\delta_{ij}\mathbf 1\otimes\mathbf 1,\quad
\widehat{t}_r^*\widehat{t}_s=\delta_{rs}\mathbf 1\otimes\mathbf 1,\quad
\widehat{t}_r^*\widehat{s}_j= q \widehat{s}_j\widehat{t}_r^*.
\]
Hence, by the universal property one can construct a surjective homomorphism $\Phi\colon \mathcal E_{n,m}^q \rightarrow (\mathcal{O}_n^{(0)}\otimes\mathcal{O}_m^{(0)})_{\Theta_q}$ defined by
\[
\Phi (s_j)=\widehat{s}_j,\quad \Phi(t_r)=\widehat{t}_r,\quad j=\overline{1,n},\ r=\overline{1,m}.
\]
Notice that due to the considerations above, $\pi_F^q=(\pi_{F,n}\otimes \pi_{F,m})_{\Theta_q}\circ\Phi$. Since $\pi_F^q$ is faithful representation of $\mathcal E_{n,m}^q$, we deduce that $\Phi$ is injective.
\end{proof}

The nuclearity of $\mathcal O_n^{(0)}\otimes\mathcal O_m^{(0)}$ and Proposition \ref{Rieff_nuclear} immediately imply the following

\begin{corollary}
 The $C^*$-algebra $\mathcal{E}_{n,m}^q$ is nuclear for any $q\in\mathbb C$, $|q|=1$.
\end{corollary}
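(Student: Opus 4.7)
The statement is a direct corollary of the preceding Theorem \ref{cuntoep_rieff} combined with Proposition \ref{Rieff_nuclear}, so the plan is essentially a two-line deduction rather than anything substantive.

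First I would invoke Theorem \ref{cuntoep_rieff} to replace $\mathcal{E}_{n,m}^q$ by its isomorphic model $(\mathcal{O}_n^{(0)} \otimes \mathcal{O}_m^{(0)})_{\Theta_q}$. Nuclearity is a $C^*$-isomorphism invariant, so it suffices to show that this Rieffel deformation is nuclear. By Proposition \ref{Rieff_nuclear}, the deformation $A_\Theta$ of a $C^*$-algebra $A$ under a periodic $\mathbb{R}^n$-action is nuclear precisely when $A$ is nuclear, so the task reduces to verifying nuclearity of the undeformed tensor product $\mathcal{O}_n^{(0)} \otimes \mathcal{O}_m^{(0)}$.

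For this last step I would quote the known fact, recalled in the introduction, that the Cuntz--Toeplitz algebras $\mathcal{O}_n^{(0)}$ and $\mathcal{O}_m^{(0)}$ are both nuclear (this is part of Cuntz's original paper \cite{cun}). Since the class of nuclear $C^*$-algebras is closed under tensor products (and since for one nuclear factor the minimal and maximal $C^*$-tensor norms coincide), the algebra $\mathcal{O}_n^{(0)} \otimes \mathcal{O}_m^{(0)}$ is nuclear. Applying Proposition \ref{Rieff_nuclear} then gives nuclearity of $(\mathcal{O}_n^{(0)} \otimes \mathcal{O}_m^{(0)})_{\Theta_q}$, hence of $\mathcal{E}_{n,m}^q$.

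There is no genuine obstacle here; all the work has already been done in establishing Theorem \ref{cuntoep_rieff}, which identified $\mathcal{E}_{n,m}^q$ with a Rieffel deformation of a nuclear algebra. The corollary is just the advertised payoff of that identification.
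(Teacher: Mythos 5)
Your proof is correct and is exactly the paper's argument: the paper deduces the corollary immediately from Theorem \ref{cuntoep_rieff} (identifying $\mathcal{E}_{n,m}^q$ with $(\mathcal{O}_n^{(0)}\otimes\mathcal{O}_m^{(0)})_{\Theta_q}$), the nuclearity of $\mathcal{O}_n^{(0)}\otimes\mathcal{O}_m^{(0)}$, and Proposition \ref{Rieff_nuclear}. (The paper also sketches an alternative, more explicit crossed-product argument afterwards, but that is supplementary, not the proof of the corollary.)
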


The nuclearity of $\mathcal E_{n,m}^q$ can also be shown using  more explicit arguments. One can use the standard trick of untwisting the $q$-deformation in the crossed product, which clarifies  informally the nature of isomorphism (\ref{PsiHom}). Namely, for $q = e^{2 \pi i \varphi_0}$
consider the action $\alpha_q$ of $\mathbb{Z}$ on $\mathcal E_{n,m}^q$ defined on the generators as
\[
\alpha_q^k(s_j)=e^{\pi i k\varphi_0} s_j,\quad \alpha_q^k(t_r)=e^{-\pi i k\varphi_0} t_r,\quad
j=1,\dots,n,\ r=1,\dots,m,\ k\in\mathbb Z.
\]
Denote by the same symbol the similar action on
$\mathcal E_{n,m}^1\simeq \mathcal{O}_n^{(0)}\otimes\mathcal{O}_m^{(0)}$.
Here we denote by $\widetilde{s}_j$ and $\widetilde{t}_r$ the generators of $\mathcal E_{n,m}^1$.

\begin{proposition}
 For any  $\varphi_0\in [0,1)$, one has an isomorphism
 $\mathcal E_{n,m}^q\rtimes_{\alpha_q} \mathbb Z\simeq \mathcal E_{n,m}^1\rtimes_{\alpha_q} \mathbb Z$.
\end{proposition}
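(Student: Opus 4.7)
The plan is the standard ``untwisting'' trick inside the crossed product. Let $u\in\mathcal E_{n,m}^q\rtimes_{\alpha_q}\mathbb Z$ and $\widetilde u\in\mathcal E_{n,m}^1\rtimes_{\alpha_q}\mathbb Z$ denote the canonical unitaries implementing the respective $\mathbb Z$-actions; since both $\mathcal E_{n,m}^q$ and $\mathcal E_{n,m}^1$ are unital, these unitaries lie in the crossed products themselves, not merely in the multiplier algebras. Inside $\mathcal E_{n,m}^q\rtimes_{\alpha_q}\mathbb Z$ put
\[
\widehat s_j:=s_ju^{-1},\qquad \widehat t_r:=t_ru^{-1},\quad j=\overline{1,n},\ r=\overline{1,m}.
\]

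The heart of the argument is the commutation computation
\[
\widehat s_i^*\widehat t_r=u\,s_i^*t_r\,u^{-1}=q\,(ut_ru^{-1})(us_i^*u^{-1})=q\cdot e^{-\pi i\varphi_0}t_r\cdot e^{-\pi i\varphi_0}s_i^*=t_rs_i^*=\widehat t_r\widehat s_i^*,
\]
where the scalar $q=e^{2\pi i\varphi_0}$ is precisely cancelled by the two conjugations. The remaining defining relations of $\mathcal E_{n,m}^1$ follow immediately ($\widehat s_i^*\widehat s_j=u s_i^*s_ju^{-1}=\delta_{ij}\mathbf 1$, $\widehat t_r^*\widehat t_l=\delta_{rl}\mathbf 1$), and a direct computation gives $u\widehat s_ju^{-1}=e^{\pi i\varphi_0}\widehat s_j$, $u\widehat t_ru^{-1}=e^{-\pi i\varphi_0}\widehat t_r$, matching the action $\alpha_q$ on $\mathcal E_{n,m}^1$. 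Hence $\widetilde s_j\mapsto\widehat s_j$, $\widetilde t_r\mapsto\widehat t_r$ (well-defined by the universal property of $\mathcal E_{n,m}^1$) together with $\widetilde u\mapsto u$ constitutes a covariant representation of $(\mathcal E_{n,m}^1,\mathbb Z,\alpha_q)$, and the universal property of the crossed product yields a $*$-homomorphism
\[
\Psi\colon\mathcal E_{n,m}^1\rtimes_{\alpha_q}\mathbb Z\longrightarrow\mathcal E_{n,m}^q\rtimes_{\alpha_q}\mathbb Z,\quad \widetilde s_j\mapsto s_ju^{-1},\ \widetilde t_r\mapsto t_ru^{-1},\ \widetilde u\mapsto u.
\]

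The symmetric construction provides the inverse. In $\mathcal E_{n,m}^1\rtimes_{\alpha_q}\mathbb Z$, the elements $\widetilde s_j\widetilde u$ and $\widetilde t_r\widetilde u$ satisfy the defining relations of $\mathcal E_{n,m}^q$, the scalar $q$ now \emph{emerging} from two successive conjugations by $\widetilde u$ in $\widetilde u^{-1}\widetilde s_i^*\widetilde t_r\widetilde u$, and conjugation by $\widetilde u$ reproduces the action $\alpha_q$. The same universal-property argument produces $\Phi\colon\mathcal E_{n,m}^q\rtimes_{\alpha_q}\mathbb Z\to\mathcal E_{n,m}^1\rtimes_{\alpha_q}\mathbb Z$ with $s_j\mapsto\widetilde s_j\widetilde u$, $t_r\mapsto\widetilde t_r\widetilde u$, $u\mapsto\widetilde u$. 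On generators $\Psi\Phi(s_j)=s_ju^{-1}\cdot u=s_j$ and $\Phi\Psi(\widetilde s_j)=\widetilde s_j\widetilde u\cdot\widetilde u^{-1}=\widetilde s_j$, and similarly for $t_r$, $\widetilde t_r$ and the implementing unitaries; hence $\Psi$ is the required isomorphism. The only genuine delicacy is the single scalar identity $q\cdot e^{-2\pi i\varphi_0}=1$ that makes the untwisting succeed; everything else is bookkeeping plus applications of universal properties.
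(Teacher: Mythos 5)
Your proof is correct and uses essentially the same untwisting trick as the paper: the paper multiplies the generators of $\mathcal E_{n,m}^1$ by $u$ inside $\mathcal E_{n,m}^1\rtimes_{\alpha_q}\mathbb Z$ to recreate the $q$-relations, while you multiply the generators of $\mathcal E_{n,m}^q$ by $u^{-1}$ inside $\mathcal E_{n,m}^q\rtimes_{\alpha_q}\mathbb Z$ to remove them, which is exactly the paper's ``evidently constructed'' inverse written out explicitly. The scalar bookkeeping, the appeal to the universal properties of $\mathcal E_{n,m}^1$, $\mathcal E_{n,m}^q$ and of the crossed product, and the verification that the two maps are mutually inverse on generators are all in order.
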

\begin{proof}
 Recall that $\mathcal E_{n,m}^1\rtimes_{\alpha_q} \mathbb Z$ is generated as a $C^*$-algebra by elements
 $\widetilde{s}_j$, $\widetilde{t}_r$ and a unitary $u$, such that the following relations satisfied
 \[
   u \widetilde{s}_ju^*=e^{i\pi \varphi_0}\, \widetilde{s}_j,\quad
    u \widetilde{t}_ru^*=e^{-i\pi \varphi_0}\, \widetilde{t}_r,\quad j=\overline{1,n},\ r=\overline{1,m}.
 \]
Put $\widehat{s}_j=\widetilde{s}_j\, u$ and $\widehat{t}_r=\widetilde{t}_r\, u$. Obviously,
$\widehat{s}_j$, $\widehat{t}_r$ and $u$ generate $\mathcal{E}_{n,m}^1\rtimes_{\alpha_q} \mathbb Z$. Further,
\[
 \widehat{s}_j^*\widehat{s}_k=\delta_{jk}\mathbf 1,\quad \widehat{t}_r^*\widehat{t}_l=\delta_{rl}\mathbf{1}
\]
and
\[
\widehat{s}_j\widehat{t}_r=\widetilde{s}_j u\widetilde{t}_r u=e^{-i\pi\varphi_0}\widetilde{s}_j\widetilde{t}_r u^2=
e^{-i\pi\varphi_0}\widetilde{t}_r\widetilde{s}_j u^2=
e^{-2\pi i\varphi_0}\widetilde{t}_r u\widetilde{s}_j u=\overline{q}\, \widehat{s}_j\widehat{t}_r.
\]
In a similar way we get $\widehat{s}_j^*\widehat{t}_r= q\widehat{t}_r\widehat{s}_j^*$, $j=\overline{1,n}$, $r=\overline{1,m}$.
Finally
\[
u\widehat{s}_j u^*=e^{i\pi\varphi_0}\widehat{s}_j,\quad u\widehat{t_r}u^*=e^{-i\pi\varphi_0}\widehat{t}_r.
\]
Hence the correspondence
\[
 s_j\mapsto \widehat{s}_j,\quad t_j\mapsto \widehat{t}_j,\quad u\mapsto u,
\]
determines a homomorphism
$\Phi_q\colon \mathcal E_{n,m}^q\rtimes_{\alpha_q} \mathbb Z \rightarrow \mathcal E_{n,m}^1\rtimes_{\alpha_q} \mathbb Z$.
The inverse is constructed evidently.
\end{proof}

Let us show the nuclearity of $\mathcal{E}_{n,m}^q$ again. Indeed, $\mathcal{E}_{n,m}^1=\mathcal O_{n}^{(0)}\otimes\mathcal{O}_m^{(0)}$ is nuclear. Then so is the crossed product
 $\mathcal E_{n,m}^1\rtimes_{\alpha_q} \mathbb Z$. Then due to the isomorphism above,
 $\mathcal E_{n,m}^q\rtimes_{\alpha_q} \mathbb Z$ is nuclear, implying the nuclearity of $\mathcal E_{n,m}^q$, see
 \cite{Black}.

We finish this part by an analog of the well-known Wold decomposition theorem for a single isometry. Recall that
\[ Q = \sum_{j = 1}^n s_j s_j^*, \ P = \sum_{r = 1}^m t_r t_r^*. \]
\begin{theorem}[Generalised Wold decomposition]\label{wold_dec}
Let $\pi\colon\mathcal E_{n,m}^q\rightarrow \mathbb{B}(\mathcal H)$ be a $*$-repre\-sen\-ta\-tion. Then
\[
\mathcal H=\mathcal H_1\oplus\mathcal H_2\oplus\mathcal H_3\oplus\mathcal H_4,
\]
where each $\mathcal H_j$, $j=1,2,3,4$, is invariant with respect to $\pi$, and for
$\pi_j=\pi\restriction_{\mathcal H_j}$ one has
\begin{itemize}
\item $\mathcal H_1=\mathcal F\otimes\mathcal K$ for some Hilbert space $\mathcal{K}$, and $\pi_1=\pi_F^q\otimes\mathbf{1}_{\mathcal K}$;
\item $\pi_2(\mathbf{1}-Q)=0$, $\pi_2(\mathbf 1 -P)\ne 0$;
\item  $\pi_3(\mathbf{1}-P)=0$, $\pi_3(\mathbf 1 -Q)\ne 0$;
\item $\pi_4 (\mathbf{1}-Q)=0$, $\pi_4 (\mathbf 1- P)=0$;
\end{itemize}
where any of $\mathcal H_j$, $j=1,2,3,4$, could be zero.
\end{theorem}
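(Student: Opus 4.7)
The plan is a double Wold-type decomposition: first for the row of isometries $\{\pi(s_j)\}_{j=1}^n$, then for $\{\pi(t_r)\}_{r=1}^m$ applied to each resulting piece. The key preliminary fact is that $\pi(Q)$ and $\pi(P)$ commute with the full action of $\mathcal E_{n,m}^q$. This reduces to a short check using $s_j^* t_r = q t_r s_j^*$ together with its consequence $t_r s_j = q s_j t_r$ noted at the start of this section, and $|q|=1$; for instance $Q t_r = q\sum_j s_j t_r s_j^* = t_r Q$, and symmetrically for $Q t_r^*$, $P s_j$, $P s_j^*$. In particular $[P,Q]=0$, so the four spectral subspaces of $(\pi(Q),\pi(P))$ are compatible and globally invariant.

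The classical Wold decomposition for a row of isometries, applied to $\{\pi(s_j)\}$, gives
\[
\mathcal H = \mathcal H_s^F \oplus \mathcal H_s^C, \qquad \mathcal H_s^F = \overline{\bigoplus_{\mu \in \Lambda_n} \pi(s_\mu)\mathcal L},
\]
with $\mathcal L = \mathrm{range}\,\pi(\mathbf 1 - Q)$ and $\pi(Q)|_{\mathcal H_s^C} = \mathbf 1$. Both summands are invariant under $\pi(t_r)$, $\pi(t_r^*)$ by the commutativity above. A second Wold decomposition, now in the $t$-variable, on each of $\mathcal H_s^F$ and $\mathcal H_s^C$ yields four invariant subspaces that we label
\[
\mathcal H_1 = (\mathcal H_s^F)_t^F, \quad \mathcal H_3 = (\mathcal H_s^F)_t^C, \quad \mathcal H_2 = (\mathcal H_s^C)_t^F, \quad \mathcal H_4 = (\mathcal H_s^C)_t^C.
\]
The conditions on $\pi_j(\mathbf 1 - Q)$ and $\pi_j(\mathbf 1 - P)$ stated in the theorem then follow directly from which Wold pieces each $\mathcal H_j$ lies in, noting that a non-zero invariant subspace of $\mathcal H_s^F$ necessarily meets $\mathcal L$ (apply $\pi(s_\mu^*)$ for suitable $\mu$), so $\pi(\mathbf 1 - Q)$ acts non-trivially on it; analogously for $P$.

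The essential technical step is the identification $\mathcal H_1 \cong \mathcal F \otimes \mathcal K$ with $\pi_1 = \pi_F^q \otimes \mathbf 1_{\mathcal K}$, where
\[
\mathcal K = \ker\pi(Q) \cap \ker\pi(P) = \mathrm{range}\,\pi\bigl((\mathbf 1 - Q)(\mathbf 1 - P)\bigr)
\]
is the ``doubly vacuum'' subspace. Define
\[
U\bigl(\pi_F^q(s_\mu t_\nu)\Omega \otimes \xi\bigr) = \pi(s_\mu t_\nu)\xi,
\]
linearly extended on the dense span of $\mathcal F \otimes \mathcal K$. The well-definedness and isometry of $U$ reduce to
\[
\bigl\langle \pi(s_\mu t_\nu)\xi,\, \pi(s_{\mu'} t_{\nu'})\eta \bigr\rangle = \bigl\langle \pi(t_{\nu'}^* s_{\mu'}^* s_\mu t_\nu)\xi,\, \eta \bigr\rangle = \delta_{\mu\mu'}\,\delta_{\nu\nu'}\,\langle \xi, \eta\rangle,
\]
which is shown by reducing $s_{\mu'}^* s_\mu$ via prefix/suffix matching and then moving the remaining $s$-factors past the $t$'s using $t_r^* s_j = \bar q\, s_j t_r^*$: off-diagonal one always obtains a non-trivial $\pi(s_\alpha^*)$ or $\pi(t_\beta^*)$ that kills $\xi$ or $\eta \in \mathcal K$. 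The right-hand side matches $\langle \pi_F^q(s_\mu t_\nu)\Omega, \pi_F^q(s_{\mu'} t_{\nu'})\Omega\rangle = \delta_{\mu\mu'}\delta_{\nu\nu'}$ read off from the explicit Fock formulas. By construction $U$ intertwines the generator actions and hence the full actions of $\pi_F^q \otimes \mathbf 1_{\mathcal K}$ and $\pi|_{\mathcal H_1}$. The main obstacle is precisely this inner-product reduction; once it is in place, the descriptions of $\mathcal H_2, \mathcal H_3, \mathcal H_4$ are read off directly from the two Wold steps.
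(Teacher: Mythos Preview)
Your proposal is correct and follows essentially the same double Wold strategy as the paper. The only organizational difference is in how $\mathcal H_1$ is identified with $\mathcal F\otimes\mathcal K$: the paper first realizes the $s$-Fock part as $\mathcal F_n\otimes\mathcal G$ with $\pi(t_r)=d_n(q)\otimes T_r$, then applies Wold to the $T_r$ on $\mathcal G$ and invokes the alternative tensor form of $\pi_F^q$; you instead build a single intertwiner $U$ directly from $\mathcal K=\ker\pi(Q)\cap\ker\pi(P)$ and verify the inner products, which is a clean shortcut to the same conclusion.
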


\begin{proof}
We will use the fact that any representation of $\mathcal{O}_n^{(0)}$ is a direct sum of a multiple of the Fock representation and a representation of $\mathcal O_n$.

So, restrict $\pi$ to $\mathcal{O}_n^{(0)}\subset\mathcal E_{n,m}^q$, and decompose $\mathcal H=\mathcal H_{F}\oplus\mathcal H_F^{\perp}$, where
\[
\pi(\mathbf 1-Q)_{|\mathcal H_F^{\perp}}=0,
\]
and $\pi(\mathcal{O}_n^0)_{|\mathcal{H}_F}$ is a multiple of the Fock representation.
Denote
\[
S_j := \pi(s_j)\restriction_{\mathcal H_F}, \quad Q := \pi(Q)\restriction_{\mathcal H_F}.
\]
Since
\[
\mathcal H_F=\bigoplus_{\lambda\in\Lambda_n} S_{\lambda} (\ker Q),
\]
it is invariant with respect to $\pi(t_r)$, $\pi(t_r^*)$, $r=\overline{1,m}$. Indeed, $t_r Q=Q t_r$ in $\mathcal E_{n,m}^q$, implying the invariance of $\ker Q$ with respect to $\pi(t_r)$ and $\pi(t_r^*)$. Denote $\ker Q$ by $\mathcal{G}$ and $T_r := \pi(t_r)\restriction_{\mathcal G}$. Then
\[
\pi(t_r)S_{\lambda}\xi=q^{|\lambda|}S_{\lambda}\pi(t_r)\xi=q^{|\lambda|} S_{\lambda}T_r \xi, \quad \xi \in \mathcal G.
\]
Thus $\mathcal{H}_F\simeq\mathcal{F}_n\otimes\mathcal G$ with
\[
\pi(s_j)\restriction_{\mathcal{H}_F}=\pi_{F,n}(s_j)\otimes\mathbf{1}_{\mathcal G},\quad
\pi(t_r)\restriction_{\mathcal{H}_F}=d_n(q)\otimes T_r,\quad j=\overline{1,n},\ r=\overline{1,m},
\]
where the family $\{T_r\}$ determines a $*$-representation $\widetilde{\pi}$ of $\mathcal O_m^{(0)}$ on $\mathcal G$.

Further, decompose $\mathcal G$ as $\mathcal G=\mathcal G_{F}\oplus\mathcal G_F^{\perp}$ into an orthogonal sum of subspaces invariant with respect to $\widetilde{\pi}$, where $\mathcal G_F=\mathcal F_m\otimes\mathcal{K}$,
\[
\widetilde{\pi}_{\mathcal G_F}(t_r)=\pi_{F,m}(t_r)\otimes\mathbf{1}_{\mathcal{K}},\quad r=\overline{1,m},\quad  \mbox{and}\quad
\widetilde{\pi}\restriction_{\mathcal G_F^{\perp}}(\mathbf{1}-P)=0.
\]
Thus $\mathcal H_F=\left(\mathcal{F}_n\otimes\mathcal F_m\otimes\mathcal K\right)\oplus\left(\mathcal F_n\otimes\mathcal G_F^{\perp}\right)$ and
\begin{align*}
\pi_{\mathcal H_F}(s_j)&=\left(\pi_{F,n}(s_j)\otimes\mathbf{1}_{\mathcal F_m}\otimes\mathbf{1}_K \right)\oplus ( \pi_{F,n}(s_j)\otimes\mathbf{1}_{\mathcal G_F^{\perp}} ),\quad j=\overline{1,n},
\\
\pi_{\mathcal H_F}(t_r)&=\left(d_n(q)\otimes\pi_{F,m}(t_r)\otimes\mathbf{1}_K\right)\oplus
\left(d_n(q)\otimes\widetilde{\pi}_{|\mathcal{G}_F^{\perp}}(t_r)\right),\quad r=\overline{1,m}.
\end{align*}
Put $\mathcal H_1=\mathcal F_n\otimes\mathcal F_m\otimes\mathcal{K}=\mathcal F\otimes\mathcal K$ and notice that that $\pi\restriction_{\mathcal{H}_1}=\pi_F^q\otimes\mathbf{1}_{\mathcal K}$, see Remark \ref{rem_fock}. Put $\mathcal H_3=\mathcal F_n\otimes\mathcal G_F^{\perp}$ and $\pi_3=\pi\restriction_{\mathcal H_3}$ i.e.,
\[
\pi_3(s_j)=\pi_{F,n}(s_j)\otimes\mathbf{1}_{\mathcal{G}_F^\perp},\quad
\pi_3(t_r)=d_n(q)\otimes \widetilde{\pi}_{|\mathcal{G}_F^{\perp}}(t_r),\quad
j=\overline{1,n},\ r=\overline{1,m}.
\]
Evidently, $\pi_3(\mathbf 1 -P)=0$ and $\pi_3(\mathbf 1-Q)\ne 0$.

Finally, applying similar arguments to the invariant subspace $\mathcal H_F^{\perp}$ one can show that there exists a decomposition
\[
\mathcal H_F^{\perp}=\mathcal H_2\oplus \mathcal H_4
\]
into the orthogonal sum of invariant subspaces, where
\begin{itemize}
\item
$\mathcal H_2=\mathcal F_m\otimes\mathcal L$ and
\[
\pi_2(s_j) := \pi\restriction_{\mathcal H_2}(s_j)= d_m (\overline{q})\otimes\widehat{\pi}(s_j),\quad
\pi_2(t_r):= \pi\restriction_{\mathcal H_2}(t_r) = \pi_{F,m}(t_r)\otimes\mathbf 1_{\mathcal L},
\]
for a representation $\widehat{\pi}$ of $\mathcal O_n$. Evidently,
$\pi_2(\mathbf 1 - Q)=0$, $\pi_2 (\mathbf 1 - P)\ne 0$.
\item For $\pi_4:=\pi\restriction_{\mathcal H_4}$ one has
\[
\pi_4(\mathbf 1 -Q)=0,\quad \pi_4(\mathbf 1 - P)=0.
\]
\end{itemize}
\end{proof}

\subsection{Ideals in $\mathcal{E}_{n,m}^q$}
In this part, we give a complete description of ideals in $\mathcal{E}_{n,m}^q$, and prove their independence on the deformation parameter $q$.

For
\[
 Q=\sum_{j=1}^n s_j s_j^*,\quad P=\sum_{r=1}^m t_r t_r^*.
\]
we consider  two-sided ideals, $\mathcal M_q$ generated by $1 - P$ and $1 - Q$, $\mathcal I_1^q$ generated by $1-Q$, $\mathcal I_2^q$~generated by $\mathbf 1-P$, and $\mathcal I_q$ generated by $(\mathbf 1-Q)(\mathbf 1-P)$. Evidently, \[ \mathcal I_q = \mathcal I^q_1 \cap \mathcal I^q_2 = \mathcal I^q_1 \cdot \mathcal I^q_2. \] Below we will show that any ideal in $\mathcal E_{n,m}^q$ coincides with the one listed above.

To clarify the structure of $\mathcal I_1^q$, $\mathcal I_2^q$ and $\mathcal I_q$, we use the construction of twisted tensor product of a certain $C^*$-algebra with the algebra of compact operators $\mathbb K$, see \cite{weber}. We give a brief review of the construction, adapted to our situation.

Recall that the $C^*$-algebra $\mathbb K$ can be considered as a universal $C^*$-algebra generated by a closed linear span of elements
$e_{\mu\nu}$, $\mu,\nu\in \Lambda_m$ subject to the relations
\[
e_{\mu_1\nu_1}e_{\mu_2\nu_2}=\delta_{\mu_2\nu_1}e_{\mu_1\nu_2},\quad
e_{\mu_1\nu_1}^*=e_{\nu_1 \mu_1},\quad \nu_i,\mu_i\in\Lambda_m,
\]
here $e_{\emptyset}:=e_{\emptyset\emptyset}$ is a minimal projection.

\begin{definition}
Let $A$ be a $C^*$-algebra,
\[
\alpha=\{\alpha_{\mu},\ \mu\in\Lambda_m\}\subset \mathsf{Aut}(A),\ \mbox{ where}\ \alpha_{\emptyset}=\id_A,
\] and $e_{\mu\nu}$, $\mu,\nu\in\Lambda_m$ be the generators of $\mathbb K$ specified above. Construct the universal $C^*$-algebra
\[
\langle A, \mathbb{K} \rangle_\alpha = C^* (a \in A, e_{\mu\nu} \in \mathbb{K} \, |\, \ a e_{\mu\nu} = e_{\mu\nu} \alpha_{\nu}^{-1}(\alpha_{\mu} (a)).
\]
We define $A \otimes_\alpha \mathbb{K}$ as a subalgebra of $\langle A, \mathbb{K} \rangle_\alpha$ generated by $ax$, $a \in A \subset \langle A, \mathbb{K} \rangle_\alpha$, $x \in \mathbb{K} \subset \langle A, \mathbb{K} \rangle_\alpha$.
\end{definition}

Notice that $\langle A, \mathbb{K} \rangle_\alpha$ exists for any $C^*$-algebra $A$ and family $\alpha\subset\mathsf{Aut} (A)$, see \cite{weber}.
\begin{remark}\ \\ \noindent
$1$. Let $x_{\mu}=e_{\mu\emptyset}$. Then $a x_{\mu}=x_{\mu}\alpha_{\mu}(a)$, $a x_{\mu}^*=x_{\mu}^*\alpha_{\mu}^{-1}(a)$, $a\in A$, compare with \cite{weber}.\\ \noindent
$2$. For any $a\in A$ one has $e_{\mu\nu} a=\alpha_{\mu}^{-1}(\alpha_{\nu}(a))e_{\mu\nu}$ implying that
\[
(a e_{\mu\nu})^*=\alpha_{\mu}^{-1}(\alpha_{\nu}(a))e_{\nu\mu}.
\]  \noindent
$3$. For any $a_1,a_2\in A$ one has $(a_1 e_{\mu_1\nu_1})(a_2 e_{\mu_2\nu_2})=\delta_{\nu_1\mu_2} a_1\alpha_{\mu_1}^{-1}(\alpha_{\mu_2}(a_2))e_{\mu_1\nu_2}$.
\end{remark}

\begin{proposition}[\cite{weber}]
Let $A$ be a $C^*$-algebra and
\[
\alpha = \{\alpha_{\mu},\ \mu \in \Lambda_m\} \subset \mathsf{Aut}(A)\ \mbox{with}\ \alpha_{\emptyset} = \id_A.
\]
Then the correspondence
\[
a e_{\mu\nu}\mapsto \alpha_{\mu}(a)\otimes e_{\mu\nu},\quad a\in A,\ \mu,\nu\in\Lambda_m
\]
extends by linearity and continuity to an isomorphism
\[
\Delta_\alpha\colon A\otimes_{\alpha} \mathbb K \rightarrow A\otimes \mathbb K,
\]
where $\Delta_{\alpha}^{-1}$ is constructed via the correspondence
\[
a\otimes e_{\mu\nu}\mapsto \alpha_{\mu}^{-1}(a) e_{\mu\nu},\quad a\in A, \ \mu,\nu\in\Lambda_m.
\]
\end{proposition}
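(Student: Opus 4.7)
The plan is to build $\Delta_\alpha$ as a $C^*$-homomorphism by realising it inside a faithful Hilbert-space representation of $\langle A,\mathbb{K}\rangle_\alpha$, and to build the inverse by engineering a twisted multiplier-valued embedding of $A$ that commutes with the canonical embedding of $\mathbb{K}$ into $M(A\otimes_\alpha\mathbb{K})$. The two maps will be mutual inverses because they will fix the natural generators, and continuity will then extend this to the full $C^*$-algebras.

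First, I would fix a faithful representation $A\hookrightarrow B(\mathcal{H}_0)$, identify $\mathbb{K}=\mathbb{K}(\ell^2(\Lambda_m))$, and define
\[
\rho\colon\langle A,\mathbb{K}\rangle_\alpha\to B(\mathcal{H}_0\otimes\ell^2(\Lambda_m)),\quad \rho(a)(\xi\otimes e_\mu)=\alpha_\mu(a)\xi\otimes e_\mu,\quad \rho(e_{\mu\nu})=\mathbf{1}_{\mathcal{H}_0}\otimes e_{\mu\nu}.
\]
A direct calculation using only $\alpha_\nu\alpha_\nu^{-1}=\id_A$ will verify the twisted commutation $\rho(a)\rho(e_{\mu\nu})=\rho(e_{\mu\nu})\rho(\alpha_\nu^{-1}(\alpha_\mu(a)))$, so by the universal property of $\langle A,\mathbb{K}\rangle_\alpha$ the pair lifts to a $C^*$-homomorphism. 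Evaluating on elementary products yields $\rho(ae_{\mu\nu})=\alpha_\mu(a)\otimes e_{\mu\nu}\in A\otimes\mathbb{K}$, so restricting to the subalgebra $A\otimes_\alpha\mathbb{K}$ will produce the desired $C^*$-homomorphism $\Delta_\alpha$; its image is all of $A\otimes\mathbb{K}$ since each $\alpha_\mu$ is an automorphism of $A$.

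The hard part will be the inverse. Because $A$ and $\mathbb{K}$ do not commute inside $A\otimes_\alpha\mathbb{K}$, the universal property of $A\otimes\mathbb{K}$ cannot be applied to the naive inclusions. The remedy will be a twisted multiplier $\pi\colon A\to M(A\otimes_\alpha\mathbb{K})$, heuristically $\pi(a)=\sum_\mu\alpha_\mu^{-1}(a)\,e_{\mu\mu}$, defined rigorously by the left/right multiplier actions $\pi(a)\cdot(be_{\mu\nu})=\alpha_\mu^{-1}(a)be_{\mu\nu}$ and $(be_{\mu\nu})\cdot\pi(a)=b\,\alpha_\mu^{-1}(a)e_{\mu\nu}$. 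Using the defining identity $e_{\mu\nu}b=\alpha_\mu^{-1}(\alpha_\nu(b))e_{\mu\nu}$ from Remark $2$, I would verify that $\pi$ is a contractive $*$-homomorphism whose image commutes with every $\sigma(e_{\rho\sigma})$, where $\sigma\colon\mathbb{K}\to M(A\otimes_\alpha\mathbb{K})$ is the canonical embedding. Nuclearity of $\mathbb{K}$ gives the $C^*$-tensor product $A\otimes\mathbb{K}$ the universal property for commuting pairs, producing a $C^*$-homomorphism $\Delta_\alpha^{-1}\colon A\otimes\mathbb{K}\to M(A\otimes_\alpha\mathbb{K})$ with $\Delta_\alpha^{-1}(a\otimes e_{\mu\nu})=\alpha_\mu^{-1}(a)\,e_{\mu\nu}\in A\otimes_\alpha\mathbb{K}$; by density the image lies entirely in $A\otimes_\alpha\mathbb{K}$. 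Finally, the two compositions $\Delta_\alpha\circ\Delta_\alpha^{-1}$ and $\Delta_\alpha^{-1}\circ\Delta_\alpha$ will fix the generators $a\otimes e_{\mu\nu}$ and $ae_{\mu\nu}$ respectively, and by continuity they will be the identity on the full $C^*$-algebras.
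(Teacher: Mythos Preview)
The paper does not prove this proposition; it simply states it with a citation to \cite{weber}. Your argument is therefore being compared to a result quoted without proof, and on its own merits it is correct: the representation $\rho$ verifies the twisted commutation relation and produces $\Delta_\alpha$ as a $*$-homomorphism, while the multiplier construction $\pi(a)$ (heuristically $\sum_\mu\alpha_\mu^{-1}(a)e_{\mu\mu}$) commutes with the image of $\mathbb{K}$ and, via nuclearity of $\mathbb{K}$, yields the inverse. One small remark: you do not actually need $\rho$ to be faithful on $\langle A,\mathbb{K}\rangle_\alpha$---the injectivity of $\Delta_\alpha$ on the subalgebra $A\otimes_\alpha\mathbb{K}$ follows once you exhibit the two-sided inverse, so you can drop that word from the plan. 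Also be sure to check the double-centralizer compatibility $(x\cdot\pi(a))\cdot y=x\cdot(\pi(a)\cdot y)$ explicitly; it is routine but should be mentioned.
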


\begin{remark}
For $x_{\mu}=e_{\mu\emptyset}$, $\mu\in\Lambda_m$ one has, see \cite{weber},
\[
\Delta_{\alpha}(a x_\mu )= \alpha_\mu(a) \otimes x_\mu,\quad
\Delta_{\alpha}(a x_\mu^* )= a \otimes x_\mu^*.
\]
\end{remark}

The following functorial property of $\otimes_{\alpha}\mathbb K$ can be derived easily.
Consider \[ \alpha = (\alpha_\mu)_{\mu \in\Lambda_m} \subset \mathsf{Aut}(A), \ \beta = (\beta_\mu)_{\mu \in\Lambda_m} \subset \mathsf{Aut}(B). \] Suppose $\varphi : A \rightarrow B$ is equivariant, i.e.  $\varphi(\alpha_\mu(a)) = \beta_\mu(\varphi(a))$ for any $a\in A$ and $\mu\in\Lambda_m$. Then one can define the homomorphism
\[
\varphi \otimes_\alpha^\beta : A \otimes_\alpha \mathbb{K} \rightarrow B \otimes_\beta \mathbb{K}, \quad \varphi\otimes_\alpha^\beta(ak)= \varphi(a)k,\quad a\in A,\ k\in\mathbb K,
\]
making the following diagram commutative
\begin{equation}\label{ktwist_func}
\begin{tikzcd}
A \otimes_\alpha \mathbb{K} \arrow[d, "\Delta_\alpha"] \arrow[r, "\varphi \otimes_\alpha^\beta"] & B \otimes_\beta \mathbb{K} \arrow[d, "\Delta_\beta"] \\
A \otimes \mathbb{K} \arrow[r, "\varphi \otimes \id_\mathbb{K}"] & B \otimes \mathbb{K}
\end{tikzcd}
\end{equation}
Namely, it is easy to verify that
\[
(\Delta_\beta^{-1}\circ(\varphi\otimes \id_{\mathbb K})\circ\Delta_\alpha) (a e_{\mu\nu})=\varphi(a)e_{\mu\nu}=\varphi\otimes_{\alpha}^{\beta}(a e_{\mu\nu}),\quad a\in A,\ \mu,\nu\in\Lambda_m.
\]

An important consequence of the commutativity of the diagram above is exactness of the functor $\otimes_{\alpha}\mathbb K$. Let
\[
\beta=(\beta_{\mu})_{\mu\in\Lambda_m}\subset\mathsf{Aut} (B),\
\alpha=(\alpha_{\mu})_{\mu\in\Lambda_m}\subset\mathsf{Aut} (A),\  \gamma=(\gamma_{\mu})_{\mu\in\Lambda_m}\subset\mathsf{Aut} (C)
\]
and consider a short exact sequence
\[
\begin{tikzcd}
 0 \arrow[r] & B \arrow[r, "\varphi_1"]  & A \arrow[r, "\varphi_2"]& C\arrow[r] & 0
 \end{tikzcd}
\]
where $\varphi_1$, $\varphi_2$ are equivariant homomorphisms. Then one has the following short exact sequence
\[
\begin{tikzcd}
 0 \arrow[r] & B\otimes_{\beta}\mathbb K \arrow[r, "\varphi_1\otimes_\beta^\alpha"]  & A\otimes_{\alpha}\mathbb K \arrow[r, "\varphi_2\otimes_\alpha^\gamma"]& C\otimes_\gamma \mathbb K\arrow[r] & 0
 \end{tikzcd}
\]

Now we are ready to study the structure of the ideals $\mathcal I_1^q\, ,\mathcal I_2^q,\mathcal I_q\subset\mathcal{E}_{n,m}^q$. We start with $\mathcal I_1^q$. Notice that
\[
\mathcal I_1^q=c.l.s.\, \{ \,t_{\mu_2}t_{\nu_2}^* s_{\mu_1}(\mathbf 1-Q)s_{\nu_1}^*,\ \mu_1,\nu_1\in\Lambda_n,\ \mu_2,\nu_2\in\Lambda_m\}.
\]
Put $E_{\mu_1\nu_1}=s_{\mu_1}(\mathbf 1-Q)s_{\nu_1}^*$, $\mu_1,\nu_1\in\Lambda_n$. Then $E_{\mu_1\nu_1}$ satisfy the relations for matrix units generating $\mathbb K$.  Moreover, $c.l.s.\,\{E_{\mu\nu},\ \mu,\nu\in\Lambda_n\}$ is an ideal in $\mathcal O_n^{(0)}$ isomorphic to $\mathbb K$.

Consider the family $\alpha^q =(\alpha_\mu)_{ \mu\in\Lambda_n}\subset\mathsf{Aut}(\mathcal O_m^{(0)})$ defined as
\[
\alpha_{\mu} (t_r)=q^{|\mu|} t_r,\quad
\alpha_{\mu} (t_r^*)=q^{-|\mu|} t_r^*,\quad \mu\in\Lambda_n,\ r=\overline{1,m}.
\]
\begin{proposition}\label{str_I1q}
The correspondence $a e_{\mu\nu}\mapsto a E_{\mu\nu}$, $a\in \mathcal O_m^{(0)}$, $\mu,\nu\in\Lambda_n$, extends to an isomorphism
\[
\Delta_{q,1} \colon \mathcal O_m^{(0)}\otimes_{\alpha^q}\mathbb K\rightarrow \mathcal I_1^q.
\]
\end{proposition}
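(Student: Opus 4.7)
The plan is to decompose the argument into three steps: (i) verify via the universal property of $\mathcal O_m^{(0)}\otimes_{\alpha^q}\mathbb K$ that $\Delta_{q,1}$ is a well-defined $*$-homomorphism into $\mathcal E_{n,m}^q$; (ii) note surjectivity onto $\mathcal I_1^q$, which is immediate; (iii) prove injectivity using the faithful Fock representation established earlier.

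For step (i) I would first check that $\{E_{\mu\nu}\}_{\mu,\nu\in\Lambda_n}$ is a system of matrix units inside $\mathcal E_{n,m}^q$. The key inputs are $Q^2=Q$, which yields $(\mathbf 1-Q)s_j=0$ and, by adjoint, $s_j^*(\mathbf 1-Q)=0$; a short case analysis on $s_{\nu_1}^*s_{\mu_2}$ according to the relative lengths of $\nu_1$ and $\mu_2$ then gives $E_{\mu_1\nu_1}E_{\mu_2\nu_2}=\delta_{\nu_1\mu_2}E_{\mu_1\nu_2}$. Next I would verify the twisted covariance. Using the extra identity $t_rs_j=qs_jt_r$ available on $\mathcal E_{n,m}^q$ for $|q|=1$ (noted at the start of Section 3), the phases cancel and one finds $t_rQ=Qt_r$, hence
\[
t_r E_{\mu\nu} = q^{|\mu|-|\nu|} E_{\mu\nu} t_r = E_{\mu\nu}\,\alpha_\nu^{-1}(\alpha_\mu(t_r)).
\]
Taking adjoints and multiplicativity extends this to every $a\in\mathcal O_m^{(0)}$, and the universal property of $\langle\mathcal O_m^{(0)},\mathbb K\rangle_{\alpha^q}$ produces $\Delta_{q,1}$. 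Step (ii) is immediate from the stated description of $\mathcal I_1^q$ as the closed linear span of elements $t_{\mu_2}t_{\nu_2}^* E_{\mu_1\nu_1}=aE_{\mu_1\nu_1}$ with $a\in\mathcal O_m^{(0)}$.

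The main point is step (iii). Here I would work in the alternative form of the Fock representation from Remark~\ref{rem_fock}, $\pi_F^q(s_j)=\pi_{F,n}(s_j)\otimes\mathbf 1$ and $\pi_F^q(t_r)=d_n(q)\otimes\pi_{F,m}(t_r)$, in which $\pi_F^q(\mathbf 1-Q)=P_{\Omega_n}\otimes\mathbf 1$ and so $\pi_F^q(E_{\mu\nu})=e_{\mu\nu}^{(n)}\otimes\mathbf 1$, where $e_{\mu\nu}^{(n)}$ are the standard matrix units of $\mathbb K(\mathcal F_n)\subset\mathcal O_n^{(0)}$. For homogeneous $a\in(\mathcal O_m^{(0)})_k$ one has $\pi_F^q(a)=d_n(q)^k\otimes\pi_{F,m}(a)$, and since $d_n(q)^k$ acts as the scalar $q^{k|\mu|}$ on the (one-dimensional) range of $e_{\mu\nu}^{(n)}$, the scalar $q^{-k|\mu|}$ coming from $\Delta_{\alpha^q}^{-1}(a\otimes e_{\mu\nu})=\alpha_\mu^{-1}(a)e_{\mu\nu}$ cancels it exactly, yielding
\[
\pi_F^q\circ\Delta_{q,1}\circ\Delta_{\alpha^q}^{-1}(a\otimes e_{\mu\nu}) = e_{\mu\nu}^{(n)}\otimes\pi_{F,m}(a).
\]
This identifies the composition with the flip $\mathcal O_m^{(0)}\otimes\mathbb K\to\mathbb K\otimes\mathcal O_m^{(0)}$ followed by $\iota\otimes\pi_{F,m}$, where $\iota:\mathbb K\xrightarrow{\sim}\mathbb K(\mathcal F_n)$ sends $e_{\mu\nu}\mapsto e_{\mu\nu}^{(n)}$. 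Since $\pi_{F,m}$ is faithful and $\mathbb K$ is nuclear, this composition is injective; combined with the faithfulness of $\pi_F^q$ on $\mathcal E_{n,m}^q$ and the fact that $\Delta_{\alpha^q}$ is an isomorphism, this forces $\Delta_{q,1}$ to be injective.

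The main delicate point I anticipate is the phase bookkeeping in step (iii): once the $q^{-k|\mu|}$ introduced by $\alpha_\mu^{-1}$ is matched against the $q^{k|\mu|}$ picked up by $d_n(q)^k$ on the range of $e_{\mu\nu}^{(n)}$, the crossed tensor product untwists into an ordinary $C^*$-tensor product and injectivity becomes transparent; everything else is either a routine verification of relations or a direct reading off of the definition of $\mathcal I_1^q$.
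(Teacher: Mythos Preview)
Your proof is correct and follows essentially the same route as the paper: both establish the homomorphism via the universal property by checking the twisted covariance $aE_{\mu\nu}=E_{\mu\nu}\alpha_\nu^{-1}(\alpha_\mu(a))$, and both prove injectivity by computing $\pi_F^q\circ\Delta_{q,1}\circ\Delta_{\alpha^q}^{-1}$ in the alternative Fock model $\pi_F^q(t_r)=d_n(q)\otimes\pi_{F,m}(t_r)$ and observing that the phase $q^{-k|\mu|}$ from $\alpha_\mu^{-1}$ cancels against $d_n(q)^k$ on the range of $e_{\mu\nu}^{(n)}$, reducing to the faithful (undeformed) Fock representation of $\mathbb K\otimes\mathcal O_m^{(0)}$. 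The only cosmetic difference is that the paper phrases the resulting map as $\pi_F^1$ restricted to $\mathbb K\otimes\mathcal O_m^{(0)}$, whereas you write it as the flip followed by $\iota\otimes\pi_{F,m}$; these are the same map.
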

\begin{proof}
We note that for any $\mu_1,\nu_1\in\Lambda_n$ and $\mu_2,\nu_2\in\Lambda_m$ one has
\[
t_{\mu_2}t_{\nu_2}^* E_{\mu_1\nu_1}= q^{(|\nu_1|-|\mu_1|)(|\mu_2|-|\nu_2|)}
E_{\mu_1\nu_1}t_{\mu_2}t_{\nu_2}^* = E_{\mu_1\nu_1}
\alpha_{\nu_1}^{-1}(\alpha_{\mu_1}(t_{\mu_2}t_{\nu_2}^*)).
\]
Thus, due  to the universal property of $\langle \mathcal O_m^{(0)}, \mathbb K \rangle_{\alpha^q}$, the correspondence \[ a e_{\mu \nu} \mapsto a E_{\mu \nu} \] determines a surjective homomorphism
$\Delta_{q,1}\colon \mathcal O_m^{(0)}\otimes_{\alpha^q}\mathbb K\rightarrow \mathcal I_1^q$.

It remains to show that $\Delta_{q,1}$ is injective. Since the Fock representation of $\mathcal E_{n,m}^q$ is faithful, we can identify $\mathcal I_1^q$ with $\pi_F^q (\mathcal I_1^q)$. It will be convenient for us to use the following form of the Fock representation, see Remark \ref{rem_fock},
\begin{align*}
\pi_F^q (s_j)&=\pi_{F,n}(s_j)\otimes\mathbf{1}_{\mathcal F_m}
:= S_j\otimes\mathbf 1_{\mathcal F_m},\ j=\overline{1,n}, \\
\pi_F^q (t_r)&= d_n(q)\otimes\pi_{F,m}(t_r)
:= d_n(q)\otimes T_r,\ r=\overline{1,m}.
\end{align*}
In particular, for any $\mu_1,\nu_1\in\Lambda_n,\ \mu_2,\nu_2\in\Lambda_m$
\[
\pi_F^q (t_{\mu_2}t_{\nu_2}^* E_{\mu_1\nu_1})=
d_n(q^{|\mu_2|-|\nu_2|}) S_{\mu_1} (\mathbf 1 - Q)S_{\nu_1}\otimes T_{\mu_2}T_{\nu_2}^*.
\]

Consider $\Delta_{q,1}\circ\Delta_{\alpha^q}^{-1}\colon \mathcal O_m^{(0)}\otimes\mathbb K\rightarrow \pi_F^q (\mathcal I_1^q)$. We intend to show that
\[
\Delta_{q,1}\circ\Delta_{\alpha^q}^{-1}=\pi_F^1,
\]
where $\pi_F^1$ is the restriction of the Fock representation of $\mathcal{O}_n^{(0)}\otimes\mathcal{O}_m^{(0)}$ to $\mathbb K\otimes\mathcal O_m^{(0)}$, and $\mathbb K$ is generated by $E_{\mu\nu}$ specified above. Notice that the family
\[
\{t_{\mu_2}t_{\nu_2}^*\otimes E_{\mu_1\nu_1},\ \mu_1,\nu_1\in\Lambda_n,\ \mu_2,\nu_2\in\Lambda_m\}
\]
generates $\mathcal{O}_m^{(0)}\otimes\mathbb K$. Then
\[
\Delta_{\alpha^q}^{-1}(t_{\mu_2}t_{\nu_2}^*\otimes E_{\mu_1\nu_1})=
\alpha_{\mu_1}^{-1}(t_{\mu_2}t_{\nu_2}^*) e_{\mu_1\nu_1}=
q^{-|\mu_1|(|\mu_2|-|\nu_2|)}t_{\mu_2}t_{\nu_2}^* e_{\mu_1\nu_1},
\]
and
\begin{align*}
\Delta_{q,1}\circ & \Delta_{\alpha^q}^{-1}(t_{\mu_2}t_{\nu_2}^*\otimes E_{\mu_1\nu_1})=
q^{-|\mu_1|(|\mu_2|-|\nu_2|)}\pi_F^q(t_{\mu_2}t_{\nu_2}^* E_{\mu_1\nu_1})
\\
&=q^{-|\mu_1|(|\mu_2|-|\nu_2|)}d_n(q^{|\mu_2|-|\nu_2|})S_{\mu_1}
(\mathbf 1-Q)S_{\nu_1}^*\otimes T_{\mu_2}T_{\nu_2}^*
\\
&=
q^{-|\mu_1|(|\mu_2|-|\nu_2|)}q^{|\mu_1|(|\mu_2|-|\nu_2|)}S_{\mu_1}d_n(q^{|\mu_2|-|\nu_2|})
(\mathbf 1-Q)S_{\nu_1}^*\otimes T_{\mu_2}T_{\nu_2}^*
\\
&=S_{\mu_1}(\mathbf 1-Q)S_{\nu_1}^*\otimes T_{\mu_2}T_{\nu_2}^*=
\pi_F^1 (E_{\mu_1\nu_1}\otimes t_{\mu_2}t_{\nu_2}^*),
\end{align*}
where we used relations $d_n(\lambda)S_j=\lambda S_j d_n(\lambda)$, $j=\overline{1,n}$, $\lambda\in\mathbb C$, and the obvious fact that
\[
d_n(\lambda) (\mathbf 1 -Q)=\mathbf 1 -Q.
\]

To complete the proof we recall that $\pi_F^1$ is a faithful representation of $\mathcal O_n^{(0)}\otimes\mathcal O_{m}^0$, so its restriction to $\mathbb K\otimes\mathcal O_m^{(0)}$ is also faithful, implying the injectivity of $\Delta_q$.
\end{proof}

\begin{remark}\label{rem_iq_1}
Evidently, $\mathcal I_q$ is a closed linear span of the family
\[
\{\, t_{\mu_2}(1-P)t_{\nu_2}^*s_{\mu_1}(\mathbf 1 -Q)s_{\nu_1}^*,\
\mu_1,\nu_1\in\Lambda_n,\ \mu_2,\nu_2\in\Lambda_m\}\subset \mathcal I_1^q.
\]
Moreover, $c.l.s.\{t_{\mu_2}(1-P)t_{\nu_2}^*,\  \mu_2,\nu_2\in\Lambda_m\}=\mathbb K\subset\mathcal O_m^{(0)}$. It is easy to see that
\[
\alpha_{\mu}(t_{\mu_2}(1-P)t_{\nu_2}^*)=q^{|\mu|(|\mu_2|-|\nu_2|)}
t_{\mu_2}(1-P)t_{\nu_2}^*,
\]
so every $\alpha_\mu\in \alpha^q$ can be regarded as an element of $\mathsf{Aut} (\mathbb K)$.
\end{remark}

A moment reflection and Proposition \ref{str_I1q} give the following corollary
\begin{proposition}\label{str_Iq_1}
Restriction of $\Delta_{q,1}$ to $\mathbb{K}\otimes_{\alpha^q}\mathbb K \subset \mathcal{O}_m^{(0)} \otimes_{\alpha^q} \mathbb{K}$ gives an isomorphism \[ \Delta_{q,1} \colon \mathbb{K}\otimes_{\alpha^q}\mathbb K \rightarrow \mathcal I_q . \]
\end{proposition}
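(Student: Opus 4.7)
The plan is to reduce the claim to a straightforward identification of the image, since the ambient map $\Delta_{q,1}\colon\mathcal O_m^{(0)}\otimes_{\alpha^q}\mathbb K\to\mathcal I_1^q$ has already been shown to be an isomorphism in Proposition~\ref{str_I1q}. Therefore its restriction to any $C^*$-subalgebra is automatically injective, and the only work is to verify that $\Delta_{q,1}(\mathbb K\otimes_{\alpha^q}\mathbb K)=\mathcal I_q$.

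First I would make sure that the expression $\mathbb K\otimes_{\alpha^q}\mathbb K$ makes sense as a $C^*$-subalgebra of $\mathcal O_m^{(0)}\otimes_{\alpha^q}\mathbb K$. By Remark~\ref{rem_iq_1}, the $\mathbb K\subset\mathcal O_m^{(0)}$ under consideration is $c.l.s.\{t_{\mu_2}(\mathbf 1-P)t_{\nu_2}^*\colon \mu_2,\nu_2\in\Lambda_m\}$, and each automorphism $\alpha_\mu\in\alpha^q$ multiplies the generator $t_{\mu_2}(\mathbf 1-P)t_{\nu_2}^*$ by the unimodular scalar $q^{|\mu|(|\mu_2|-|\nu_2|)}$, so $\alpha^q$ restricts to a family of automorphisms of this $\mathbb K$. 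Consequently $\mathbb K\otimes_{\alpha^q}\mathbb K$ is a well-defined $C^*$-subalgebra of $\mathcal O_m^{(0)}\otimes_{\alpha^q}\mathbb K$, equal to the closed linear span of the elements $a\, e_{\mu_1\nu_1}$ with $a=t_{\mu_2}(\mathbf 1-P)t_{\nu_2}^*$.

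Next, apply $\Delta_{q,1}$ directly. By definition,
\[
\Delta_{q,1}\bigl(t_{\mu_2}(\mathbf 1-P)t_{\nu_2}^*\, e_{\mu_1\nu_1}\bigr)
=t_{\mu_2}(\mathbf 1-P)t_{\nu_2}^*\,E_{\mu_1\nu_1}
=t_{\mu_2}(\mathbf 1-P)t_{\nu_2}^*\,s_{\mu_1}(\mathbf 1-Q)s_{\nu_1}^*.
\]
By Remark~\ref{rem_iq_1}, the closed linear span of the right-hand side, as $\mu_i,\nu_i$ range over the appropriate alphabets, is exactly $\mathcal I_q$. Thus the image of $\mathbb K\otimes_{\alpha^q}\mathbb K$ under $\Delta_{q,1}$ coincides with $\mathcal I_q$, and combined with the injectivity inherited from Proposition~\ref{str_I1q}, this yields the desired isomorphism. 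I do not anticipate a real obstacle here; the only point requiring a moment's care is the invariance of the subalgebra $\mathbb K\subset\mathcal O_m^{(0)}$ under every $\alpha_\mu$, which is immediate from the explicit formula for $\alpha^q$.
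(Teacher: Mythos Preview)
Your proposal is correct and follows exactly the approach the paper has in mind: the paper states that the result is an immediate corollary of Proposition~\ref{str_I1q} together with Remark~\ref{rem_iq_1}, and your argument simply spells out this ``moment's reflection'' by noting that the restriction of an isomorphism is injective and then identifying the image with $\mathcal I_q$ via the explicit generators.
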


To deal with $\mathcal I_2^q$, we consider the family $\beta^q=\{\beta_{\mu},\ \mu\in\Lambda_m\}\subset \mathsf{Aut}(\mathcal O_n^{(0)})$ defined as
\[
\beta_\mu (s_j)=q^{-|\mu|}s_j,\ \beta_{\mu}(s_j^*)=q^{|\mu|}s_j^*,\ j=\overline{1,n}.
\]
\begin{proposition}\label{str_I2q}
One has an isomorphism $\Delta_{q,2}\colon\mathcal{O}_n^{(0)}\otimes_{\beta^q}\mathbb K\rightarrow \mathcal I_2^q$.
\end{proposition}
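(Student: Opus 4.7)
The plan is to mirror the proof of Proposition \ref{str_I1q}, swapping the roles of the $s_j$ and $t_r$ generators and using the "other" form of the Fock representation from Remark \ref{rem_fock}.

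First, I would observe that the elements $F_{\mu_2\nu_2}=t_{\mu_2}(\mathbf{1}-P)t_{\nu_2}^*$, $\mu_2,\nu_2\in\Lambda_m$, satisfy the matrix unit relations, and that $c.l.s.\,\{F_{\mu_2\nu_2}\colon \mu_2,\nu_2\in\Lambda_m\}$ is the copy of $\mathbb{K}$ sitting inside $\mathcal{O}_m^{(0)}\subset\mathcal{E}_{n,m}^q$ as its canonical ideal. Then
\[
\mathcal I_2^q=c.l.s.\, \{\, s_{\mu_1}s_{\nu_1}^*\,t_{\mu_2}(\mathbf{1}-P)t_{\nu_2}^*\ \big|\ \mu_1,\nu_1\in\Lambda_n,\ \mu_2,\nu_2\in\Lambda_m\},
\]
and a short computation using the defining $q$-relations yields
\[
s_{\mu_1}s_{\nu_1}^*\, F_{\mu_2\nu_2}=q^{(|\mu_1|-|\nu_1|)(|\mu_2|-|\nu_2|)}\, F_{\mu_2\nu_2}\, s_{\mu_1}s_{\nu_1}^* = F_{\mu_2\nu_2}\,\beta_{\nu_2}^{-1}\bigl(\beta_{\mu_2}(s_{\mu_1}s_{\nu_1}^*)\bigr).
\]
By the universal property of $\langle\mathcal{O}_n^{(0)},\mathbb{K}\rangle_{\beta^q}$ this extends to a surjective homomorphism $\Delta_{q,2}\colon\mathcal{O}_n^{(0)}\otimes_{\beta^q}\mathbb{K}\rightarrow\mathcal{I}_2^q$ sending $a e_{\mu_2\nu_2}$ to $a F_{\mu_2\nu_2}$.

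For injectivity I would use the Fock representation (which is faithful) in the form
\[
\pi_F^q(s_j)=\pi_{F,n}(s_j)\otimes d_m(q^{-1}),\quad \pi_F^q(t_r)=\mathbf{1}_{\mathcal F_n}\otimes \pi_{F,m}(t_r),
\]
noted in Remark \ref{rem_fock}. Under this choice $\pi_F^q(F_{\mu_2\nu_2})=\mathbf{1}\otimes T_{\mu_2}(\mathbf{1}-P)T_{\nu_2}^*$ carries no $d_m$-twist, while $\pi_F^q(s_{\mu_1}s_{\nu_1}^*)$ carries the factor $d_m(q^{|\nu_1|-|\mu_1|})$. Composing with $\Delta_{\beta^q}^{-1}\colon \mathcal O_n^{(0)}\otimes\mathbb K\to\mathcal O_n^{(0)}\otimes_{\beta^q}\mathbb K$ will cancel the twist exactly as in Proposition \ref{str_I1q}, giving
\[
\Delta_{q,2}\circ\Delta_{\beta^q}^{-1}(s_{\mu_1}s_{\nu_1}^*\otimes F_{\mu_2\nu_2})=\widetilde{\pi}_F^1(s_{\mu_1}s_{\nu_1}^*\otimes F_{\mu_2\nu_2}),
\]
where $\widetilde{\pi}_F^1$ is the restriction of the Fock representation of $\mathcal O_n^{(0)}\otimes\mathcal O_m^{(0)}$ to $\mathcal O_n^{(0)}\otimes\mathbb K$. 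Since that Fock representation is faithful on the full tensor product, its restriction to $\mathcal O_n^{(0)}\otimes\mathbb K$ is faithful as well, so $\Delta_{q,2}$ is injective.

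The only step that requires care is the cancellation of the $q$-phases in the Fock picture: one must pick the right version of $\pi_F^q$ from Remark \ref{rem_fock} so that $F_{\mu_2\nu_2}$ becomes untwisted and the twisting of $s_{\mu_1}s_{\nu_1}^*$ exactly matches the automorphism $\beta^q$ used to define the crossed-type product. Apart from this bookkeeping (symmetric to the one carried out for $\mathcal I_1^q$), the argument is routine.
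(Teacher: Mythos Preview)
Your proposal is correct and follows exactly the approach the paper intends: the paper states Proposition~\ref{str_I2q} without proof, relying on the evident symmetry with Proposition~\ref{str_I1q}, and you have spelled out precisely that symmetric argument (including the appropriate choice of Fock model from Remark~\ref{rem_fock}). One small typo: the intermediate phase should read $q^{(|\nu_1|-|\mu_1|)(|\mu_2|-|\nu_2|)}$ rather than $q^{(|\mu_1|-|\nu_1|)(|\mu_2|-|\nu_2|)}$, though your final identity $s_{\mu_1}s_{\nu_1}^*F_{\mu_2\nu_2}=F_{\mu_2\nu_2}\,\beta_{\nu_2}^{-1}(\beta_{\mu_2}(s_{\mu_1}s_{\nu_1}^*))$ is correct as written.
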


Obviously, $\Delta_{q,2}$ induces the isomorphism
$\mathbb K\otimes_{\beta^q}\mathbb K\simeq\mathcal I_q$, where the first term is an ideal in $\mathcal O_n^{(0)}$ and the second in $\mathcal O_m^{(0)}$ respectively.

Write
\[
\varepsilon_n\colon\mathbb K\rightarrow \mathcal O_n^{(0)}, \quad \varepsilon_m\colon\mathbb K\rightarrow \mathcal O_m^{(0)},
\]
for the canonical embeddings and
\[
q_n\colon\mathcal O_n^{(0)}\rightarrow\mathcal O_n, \quad q_m\colon\mathcal O_m^{(0)}\rightarrow \mathcal{O}_m,
\]
for the quotient maps. Let also
\[
\varepsilon_{q,j}\colon\mathcal I_q\rightarrow\mathcal I_{j}^q, \quad j=1,2,
\]
be the embeddings and
\[
\pi_{q,j}\colon\mathcal I_{j}^q\rightarrow \mathcal I_j^q/\mathcal I_q, \quad j=1,2,
\]
the quotient maps. Notice also that the families $\alpha^q\subset\mathsf{Aut}(\mathcal O_m^{(0)})$,
$\beta^q\subset\mathsf{Aut}(\mathcal O_n^{(0)})$ determine families of automorphisms of $\mathcal O_m$ and $\mathcal O_n$ respectively, also denoted by $\alpha^q$ and $\beta^q$.

\begin{theorem}\label{i1i2q_comm}
One has the following isomorphism of extensions
\[
\begin{tikzcd}
 0 \arrow[r] & \mathcal{I}_q \arrow[r, "\varepsilon_{q,1}"] \arrow[d,"\Delta_{\alpha^q}\circ\Delta_{q,1}^{-1}"] & \mathcal{I}_1^q \arrow[d, "\Delta_{\alpha^q}\circ\Delta_{q,1}^{-1}"] \arrow[r, "\pi_{q,1}"] & \mathcal{I}_1^q / \mathcal{I}_q \arrow[d, "\simeq"] \arrow[r] & 0 \\
  0 \arrow[r] & \mathbb{K} \otimes \mathbb{K} \arrow[r,  "\varepsilon_m\otimes\id_\mathbb{K}"] &\mathcal{O}_m^0 \otimes \mathbb{K}  \arrow[r, "q_m\otimes\id_{\mathbb K}"] & \mathcal{O}_m\otimes \mathbb{K}  \arrow[r] & 0
\end{tikzcd}
\]
and
\[
\begin{tikzcd}
 0 \arrow[r] & \mathcal{I}_q \arrow[r, "\varepsilon_{q,2}"] \arrow[d,"\Delta_{\beta^q}\circ\Delta_{q,2}^{-1}"] & \mathcal{I}_2^q \arrow[d, "\Delta_{\beta^q}\circ\Delta_{q,2}^{-1}"] \arrow[r,"\pi_{q,2}"] & \mathcal{I}_2^q / \mathcal{I}_q \arrow[d, "\simeq"] \arrow[r] & 0 \\
  0 \arrow[r] & \mathbb{K} \otimes \mathbb{K} \arrow[r, "\varepsilon_n\otimes
  \id_\mathbb{K}"] & \mathcal{O}_n^0\otimes\mathbb{K} \arrow[r, "q_n\otimes\id_{\mathbb K} "] & \mathcal{O}_n\otimes\mathbb{K} \arrow[r] & 0
\end{tikzcd}
\]
\end{theorem}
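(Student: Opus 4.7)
The two extensions in the theorem are formally parallel: the second is obtained from the first by the substitution $(\mathcal O_m^{(0)},\alpha^q,\Delta_{q,1})\leftrightarrow(\mathcal O_n^{(0)},\beta^q,\Delta_{q,2})$ and Proposition~\ref{str_I2q} in place of Proposition~\ref{str_I1q}, so I would prove the first in detail and only indicate the substitution at the end. The plan is to transport the top row into twisted tensor products via $\Delta_{q,1}$, identify the cokernel using exactness of $(-)\otimes_{\alpha^q}\mathbb K$, and then untwist to the ordinary tensor product via the naturality square~\eqref{ktwist_func}.

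First I would combine Propositions~\ref{str_I1q} and~\ref{str_Iq_1}: $\Delta_{q,1}\colon\mathcal O_m^{(0)}\otimes_{\alpha^q}\mathbb K\to\mathcal I_1^q$ is an isomorphism that restricts to an isomorphism $\mathbb K\otimes_{\alpha^q}\mathbb K\to\mathcal I_q$, so $\Delta_{q,1}^{-1}$ carries the top extension onto one with middle term $\mathcal O_m^{(0)}\otimes_{\alpha^q}\mathbb K$ and left term $\mathbb K\otimes_{\alpha^q}\mathbb K$. To identify the cokernel, I would observe that the canonical sequence $0\to\mathbb K\xrightarrow{\varepsilon_m}\mathcal O_m^{(0)}\xrightarrow{q_m}\mathcal O_m\to 0$ is $\alpha^q$-equivariant (immediate from $\alpha_\mu(t_r)=q^{|\mu|}t_r$, which preserves $\mathbb K=\langle\mathbf 1-P\rangle$ and descends to $\mathcal O_m$) and then apply the exactness of $(-)\otimes_{\alpha^q}\mathbb K$ recorded in the excerpt after~\eqref{ktwist_func}; this exhibits the cokernel as $\mathcal O_m\otimes_{\alpha^q}\mathbb K$.

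Finally I would invoke the naturality square~\eqref{ktwist_func} applied separately to $\varepsilon_m$ and to $q_m$: this yields a commutative diagram intertwining the twisted extension
\[
0\to\mathbb K\otimes_{\alpha^q}\mathbb K\to\mathcal O_m^{(0)}\otimes_{\alpha^q}\mathbb K\to\mathcal O_m\otimes_{\alpha^q}\mathbb K\to 0
\]
with the bottom row $0\to\mathbb K\otimes\mathbb K\to\mathcal O_m^{(0)}\otimes\mathbb K\to\mathcal O_m\otimes\mathbb K\to 0$, the vertical arrows being three copies of $\Delta_{\alpha^q}$; composing this with the isomorphism of extensions from the previous step produces the first diagram of the theorem. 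The right-hand vertical arrow is then forced to be an isomorphism either by the five lemma or directly as the map induced on cokernels. There is no serious obstacle; the one subtle point is bookkeeping the three distinct restrictions of the family $\alpha^q$ (on $\mathbb K$, on $\mathcal O_m^{(0)}$, and on $\mathcal O_m$) and the corresponding three instances of $\Delta_{\alpha^q}$, so that every square pairs the correct ones and the equivariance hypotheses of exactness and of~\eqref{ktwist_func} apply on the nose.
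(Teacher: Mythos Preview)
Your proposal is correct and follows essentially the same route as the paper: the paper also factors $\Delta_{\alpha^q}\circ\Delta_{q,1}^{-1}$ through an intermediate twisted-tensor-product row, using Propositions~\ref{str_I1q}/\ref{str_Iq_1} (equivalently Remark~\ref{rem_iq_1}) for the top squares, the naturality square~\eqref{ktwist_func} for the bottom squares, and then defines the right-hand vertical isomorphism as the unique map induced on cokernels. Your remark about tracking the three restrictions of $\alpha^q$ and the corresponding instances of $\Delta_{\alpha^q}$ is exactly the content of the paper's three-row diagram~\eqref{comm_diag_tw}.
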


\begin{proof}
Indeed, each  row in diagram (\ref{comm_diag_tw}) below is exact and every non-dashed vertical arrow  is an isomorphism. The bottom left and bottom right squares are commutative due to (\ref{ktwist_func}). The top left square is commutative due to the arguments in the proof of Proposition \ref{str_I1q} combined with Remark \ref{rem_iq_1}. Hence there exists a unique isomomorphism
 \[
 \Phi_{q,1}\colon\mathcal I_1^q/\mathcal I_q\rightarrow\mathcal O_m\otimes_{\alpha^q}\mathbb K,
 \]
 making the diagram (\ref{comm_diag_tw}) commutative
\begin{equation}\label{comm_diag_tw}
\begin{tikzcd}
 0 \arrow[r] & \mathcal{I}_q \arrow[r, "\varepsilon_{q,1}"] \arrow[d, "\Delta_{q,1}^{-1}"] & \mathcal{I}_1^q \arrow[d, "\Delta_{q,1}^{-1}"] \arrow[r, "\pi_{q,1}"] & \mathcal{I}_1^q / \mathcal{I}_q \arrow[d, dashrightarrow, "\Phi_{q,1}"] \arrow[r] & 0 \\
 0 \arrow[r] & \mathbb{K} \otimes_{\alpha^q} \mathbb{K} \arrow[r, "\varepsilon_{m}\otimes_{\alpha^q}^{\alpha^q}"] \arrow[d, "\Delta_{\alpha^q}"] & \mathcal{O}_m^0 \otimes_{\alpha^q}\mathbb{K}  \arrow[d, "\Delta_{\alpha^q}"] \arrow[r, " q_m \otimes_{\alpha_q}^{\alpha^q}"] & \mathcal{O}_m\otimes_{\alpha^q}\mathbb{K}   \arrow[d, "\Delta_{\alpha^q}"] \arrow[r] & 0 \\
  0 \arrow[r] & \mathbb{K} \otimes \mathbb{K} \arrow[r, "\varepsilon_m\otimes\id_\mathbb{K} "] & \mathcal{O}_m^0\otimes\mathbb{K}  \arrow[r, "q_m\otimes\id_{\mathbb K} "] & \mathcal{O}_m\otimes\mathbb{K} \arrow[r] & 0 \\
\end{tikzcd}
\end{equation}
The proof for $\mathcal I_2^q$ is similar.
\end{proof}

The following Lemma follows from the fact that $\mathcal M_q = \mathcal{I}_1^q + \mathcal{I}_2^q$.

\begin{lemma}\label{lemextmq}
\[
\mathcal{M}_q / \mathcal{I}_q \simeq  \mathcal{I}_1^q/\mathcal{I}_q \oplus \mathcal{I}_2^q/\mathcal{I}_q \simeq \mathcal{O}_m \otimes \mathbb{K}\oplus\mathcal O_n\otimes\mathbb{K}.
\]
\end{lemma}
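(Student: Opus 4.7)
The plan is to establish the lemma in two stages: first show $\mathcal{M}_q = \mathcal{I}_1^q + \mathcal{I}_2^q$, then use the fact that $\mathcal{I}_q = \mathcal{I}_1^q \cap \mathcal{I}_2^q = \mathcal{I}_1^q \cdot \mathcal{I}_2^q$ (noted earlier in the excerpt) to decompose $\mathcal{M}_q/\mathcal{I}_q$ as a direct sum. The concrete identification then follows immediately from Theorem \ref{i1i2q_comm}.

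For the first step, $\mathcal{I}_1^q + \mathcal{I}_2^q$ is a closed two-sided ideal in $\mathcal{E}_{n,m}^q$ containing both $\mathbf{1}-Q$ and $\mathbf{1}-P$, so $\mathcal{M}_q \subseteq \mathcal{I}_1^q + \mathcal{I}_2^q$. The reverse inclusion is obvious since each $\mathcal{I}_j^q$ is contained in $\mathcal{M}_q$ by definition.

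For the second step, define
\[
\Psi\colon \mathcal{I}_1^q/\mathcal{I}_q\, \oplus\, \mathcal{I}_2^q/\mathcal{I}_q\longrightarrow \mathcal{M}_q/\mathcal{I}_q,\quad
\Psi\bigl(a+\mathcal{I}_q,\, b+\mathcal{I}_q\bigr)=a+b+\mathcal{I}_q.
\]
Surjectivity is immediate from $\mathcal{M}_q=\mathcal{I}_1^q+\mathcal{I}_2^q$. To see that $\Psi$ is a $*$-homomorphism (with componentwise structure on the left), one checks that for $a_i\in\mathcal{I}_1^q$, $b_i\in\mathcal{I}_2^q$ the cross-terms $a_1 b_2$ and $b_1 a_2$ lie in $\mathcal{I}_1^q\cdot\mathcal{I}_2^q=\mathcal{I}_q$, so
\[
(a_1+b_1)(a_2+b_2)\equiv a_1 a_2+b_1 b_2\pmod{\mathcal{I}_q}.
\]
For injectivity, suppose $a+b\in\mathcal{I}_q\subseteq\mathcal{I}_1^q\cap\mathcal{I}_2^q$ with $a\in\mathcal{I}_1^q$, $b\in\mathcal{I}_2^q$. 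Then $b=(a+b)-a\in\mathcal{I}_1^q$, so $b\in\mathcal{I}_1^q\cap\mathcal{I}_2^q=\mathcal{I}_q$, and symmetrically $a\in\mathcal{I}_q$. Hence both classes vanish.

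Finally, combining with the isomorphisms $\mathcal{I}_1^q/\mathcal{I}_q\simeq \mathcal{O}_m\otimes\mathbb{K}$ and $\mathcal{I}_2^q/\mathcal{I}_q\simeq \mathcal{O}_n\otimes\mathbb{K}$ established in Theorem \ref{i1i2q_comm} yields the claim. No serious obstacle is expected here: the argument is purely formal ideal-theory, and the only thing one must not forget is to verify that the naive componentwise map actually respects multiplication, which is precisely where the identity $\mathcal{I}_1^q\cdot\mathcal{I}_2^q=\mathcal{I}_q$ enters.
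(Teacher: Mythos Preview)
Your proof is correct and follows the same approach as the paper, which simply states that the lemma ``follows from the fact that $\mathcal{M}_q = \mathcal{I}_1^q + \mathcal{I}_2^q$'' without further detail. You have supplied exactly the standard ideal-theoretic argument the paper leaves implicit, including the verification that the cross-terms land in $\mathcal{I}_q$ and the appeal to Theorem~\ref{i1i2q_comm} for the concrete identification.
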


Theorem \ref{i1i2q_comm} implies that $\mathcal I_q, \mathcal I^q_1, \mathcal I^q_2$ are stable $C^*$-algebras. It follows from \cite{Rordam}, Proposition 6.12, that an extension of a stable $C^*$-algebra by $\mathbb{K}$ is also stable. Thus, Lemma \ref{lemextmq} implies immediately the following important corollary.

\begin{corollary}
For any $q\in\mathbb{C}$, $|q|=1$, the $C^*$-algebra $\mathcal{M}_q$ is stable.
\end{corollary}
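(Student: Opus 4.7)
The proof is essentially an assembly of the identifications just established together with the cited result of R{\o}rdam. The plan is as follows.

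Consider the natural short exact sequence
\[
0 \to \mathcal{I}_q \to \mathcal{M}_q \to \mathcal{M}_q/\mathcal{I}_q \to 0.
\]
First I would identify the ideal: by Proposition \ref{str_Iq_1} (composed with the isomorphism $\Delta_{\alpha^q}$ from the diagram of Theorem \ref{i1i2q_comm}), one has $\mathcal{I}_q \simeq \mathbb{K}\otimes \mathbb{K}$. Since the spatial tensor product of two copies of the compact operators on a separable Hilbert space is again isomorphic to $\mathbb{K}$, the ideal in our extension is just the compacts.

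Next I would identify the quotient: Lemma \ref{lemextmq} gives
\[
\mathcal{M}_q/\mathcal{I}_q \simeq (\mathcal{O}_m\otimes\mathbb{K})\oplus (\mathcal{O}_n\otimes\mathbb{K}).
\]
Each summand has the form $B\otimes\mathbb{K}$ and is therefore stable by definition, and a finite direct sum of stable $C^*$-algebras is stable. Hence $\mathcal{M}_q/\mathcal{I}_q$ is stable.

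Finally, I would invoke the already-cited Proposition 6.12 of \cite{Rordam}, which asserts that an extension of a stable $C^*$-algebra by $\mathbb{K}$ is stable. Since $\mathcal{M}_q$ sits in an exact sequence with ideal $\mathbb{K}$ and stable quotient, this yields the desired conclusion. There is no real obstacle in this argument: all substantive work has been done in Theorem \ref{i1i2q_comm} and Lemma \ref{lemextmq}, and the stability of the extension is an immediate application of a quoted black-box theorem.
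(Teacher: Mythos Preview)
Your proposal is correct and follows exactly the same route as the paper: identify $\mathcal{I}_q\simeq\mathbb{K}$ via Theorem~\ref{i1i2q_comm}, identify the quotient $\mathcal{M}_q/\mathcal{I}_q\simeq(\mathcal{O}_m\otimes\mathbb{K})\oplus(\mathcal{O}_n\otimes\mathbb{K})$ via Lemma~\ref{lemextmq}, observe that the quotient is stable, and apply R{\o}rdam's Proposition~6.12. The paper records this argument in the sentence immediately preceding the corollary rather than as a formal proof, but the content is identical.
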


Denote the Calkin algebra by $Q$. Recall that for $C^*$-algebras $A$ and $B$ the isomorphism
\[
\mathsf{Ext}( A \oplus B,\mathbb{K}) \simeq \mathsf{Ext}(A,\mathbb{K}) \oplus \mathsf{Ext}(B,\mathbb{K})
\]
is given as follows. Let
\[
\iota_1 : A \rightarrow A \oplus B,\quad  \iota_1(a) = (a,0),\quad
\iota_2 : B \rightarrow A \oplus B,\quad \iota_2(b) = (0,b).
\]
For a Busby invariant $\tau : A \oplus B \rightarrow Q$ define
\[\mathsf{F} : \mathsf{Ext}(A \oplus B, \mathbb{K}) \rightarrow \mathsf{Ext}(A, \mathbb{K}) \oplus \mathsf{Ext}(B, \mathbb{K}),\quad
\mathsf{F}(\tau) = (\tau \circ \iota_1, \tau \circ \iota_2).
\]
It can be shown, see \cite{higson}, that $\mathsf{F}$ determines a group isomorphism.

\begin{remark}\label{rem_beta}
Consider an extension
\begin{equation}\label{ext_busby}
\begin{tikzcd}
  0 \arrow[r] & B \arrow[r] & E \arrow[r] & A \arrow[r]  & 0
\end{tikzcd}
\end{equation}
Let $i\colon B \to M(B)$ be the canonical embedding. Define $\beta$ to be the unique map such that
\[
\beta(e)i(b) = i(eb),\quad\mbox{for every}\ b \in B,\ e\in E.
\]
Then the Busby invariant $\tau$ is the unique map which makes the diagram commute.
\[
\begin{tikzcd}
 0 \arrow[r] & B \arrow[r, "i"] & M(B) \arrow[r] & M(B)/B \arrow[r] & 0 \\
 0 \arrow[r] & B \arrow[r]\arrow[equal]{u} & E \arrow[r]  \arrow[u, "\beta"]& A \arrow[r] \arrow[u, "\tau"] & 0
\end{tikzcd}
\]
We will use both notations $[E]$ and $[\tau]$ in order to denote the class of the extension (\ref{ext_busby}) in $\mathsf{Ext}(A, B)$.
\end{remark}

Let $[\mathcal{M}_q] \in \mathsf{Ext}( \mathcal{I}_1^q/\mathcal{I}_q \oplus \mathcal{I}_2^q/\mathcal{I}_q, \mathcal I_q)$, $[\mathcal I_1^q] \in \mathsf{Ext}(\mathcal I_1^q / \mathcal{I}_q, \mathcal{I}_q)$, $[\mathcal I_2^q] \in \mathsf{Ext}(\mathcal I_2^q / \mathcal{I}_q, \mathcal{I}_q)$ respectively be the classes of the following extensions
\begin{gather*}
0 \rightarrow \mathcal I_q \rightarrow \mathcal M_q \rightarrow \mathcal{I}_1^q/\mathcal{I}_q \oplus \mathcal{I}_2^q/\mathcal{I}_q \rightarrow 0,
\\
0 \rightarrow \mathcal I_q \rightarrow \mathcal I^q_1 \rightarrow \mathcal I^q_1 / \mathcal I_q \rightarrow 0,
\\
0 \rightarrow \mathcal I_q \rightarrow \mathcal I^q_2 \rightarrow \mathcal I^q_2 / \mathcal I_q \rightarrow 0.
\end{gather*}

\begin{lemma}\label{lem_qdecomp}
\[
[\mathcal{M}_q] = ([\mathcal{I}_1^q], [\mathcal{I}_2^q]) \in \mathsf{Ext}(\mathcal I_1^q / \mathcal{I}_q, \mathcal{I}_q) \oplus \mathsf{Ext}(\mathcal I_2^q / \mathcal{I}_q, \mathcal{I}_q) \simeq \mathsf{Ext}( \mathcal{I}_1^q/\mathcal{I}_q \oplus \mathcal{I}_2^q/\mathcal{I}_q,\mathcal{I}_q).
\]
\end{lemma}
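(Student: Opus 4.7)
The plan is to apply the explicit description of the isomorphism $\mathsf{F}\colon \mathsf{Ext}(A\oplus B,\mathbb{K})\to\mathsf{Ext}(A,\mathbb{K})\oplus\mathsf{Ext}(B,\mathbb{K})$ recalled just before the lemma, namely $\mathsf{F}([\tau])=([\tau\circ\iota_1],[\tau\circ\iota_2])$. At the level of extensions, composing the Busby invariant with $\iota_j$ corresponds to pulling back the extension along $\iota_j$: the total space of the pullback is the preimage of $\iota_j(\mathcal{I}_j^q/\mathcal{I}_q)$ in $\mathcal{M}_q$ under the quotient map $\mathcal{M}_q\to\mathcal{M}_q/\mathcal{I}_q$, and its Busby invariant is $\tau_{\mathcal{M}_q}\circ\iota_j$. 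So the lemma reduces to identifying these two pullbacks with the extensions $[\mathcal{I}_1^q]$ and $[\mathcal{I}_2^q]$.

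This identification is elementary. Since $\mathcal{M}_q=\mathcal{I}_1^q+\mathcal{I}_2^q$ and $\mathcal{I}_q=\mathcal{I}_1^q\cap\mathcal{I}_2^q$, the canonical isomorphism $\mathcal{M}_q/\mathcal{I}_q\simeq \mathcal{I}_1^q/\mathcal{I}_q\oplus\mathcal{I}_2^q/\mathcal{I}_q$ from Lemma \ref{lemextmq} sends the class of $e_1+e_2$ (with $e_j\in\mathcal{I}_j^q$) to $(e_1+\mathcal{I}_q,\, e_2+\mathcal{I}_q)$. An element $e\in\mathcal{M}_q$ thus belongs to the preimage of $\iota_1(\mathcal{I}_1^q/\mathcal{I}_q)$ iff it admits a decomposition $e=e_1+e_2$ with $e_2\in\mathcal{I}_q$; since $\mathcal{I}_q\subset\mathcal{I}_1^q$, this happens precisely when $e\in\mathcal{I}_1^q$. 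Symmetrically, the preimage of $\iota_2(\mathcal{I}_2^q/\mathcal{I}_q)$ equals $\mathcal{I}_2^q$.

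Assembling this in the Busby diagram of Remark \ref{rem_beta} yields commutative diagrams identifying $\iota_j^*([\mathcal{M}_q])$ with $[\mathcal{I}_j^q]$ for $j=1,2$, which together with $\mathsf{F}([\mathcal{M}_q])=(\iota_1^*[\mathcal{M}_q],\iota_2^*[\mathcal{M}_q])$ yields the claim. There is no real obstacle: the content is purely the compatibility of the ideal decomposition $\mathcal{M}_q=\mathcal{I}_1^q+\mathcal{I}_2^q$, $\mathcal{I}_q=\mathcal{I}_1^q\cap\mathcal{I}_2^q$ with the Busby picture of $\mathsf{Ext}$, and once the preimage computation above is performed the result is immediate.
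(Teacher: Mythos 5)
Your argument is correct and is essentially the paper's own proof: both reduce the lemma to the identity $\tau_{\mathcal{M}_q}\circ\iota_j=\tau_{\mathcal{I}_j^q}$, you by invoking the pullback description of $\tau\circ\iota_j$ together with the observation that the preimage of $\iota_j(\mathcal{I}_j^q/\mathcal{I}_q)$ in $\mathcal{M}_q$ is exactly $\mathcal{I}_j^q$, the paper by checking directly that the canonical maps $\beta_j$ into $M(\mathcal{I}_q)$ satisfy $\beta_{\mathcal{M}_q}\circ j_1=\beta_{\mathcal{I}_1^q}$ (the same uniqueness-of-multiplier argument your ``assembling'' step implicitly uses). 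No gap; the difference is only in packaging.
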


\begin{proof}
Consider the following morphism of extensions:
\[
\begin{tikzcd}[row sep=scriptsize, column sep=scriptsize]
& \mathcal{I}_q \arrow[dl, equal] \arrow[rr, "i"] \arrow[dd, equal] & & M(\mathcal{I}_q) \arrow[dd, equal] \arrow[rr] & & M(\mathcal{I}_q)/\mathcal{I}_q \arrow[dd, equal] \\
\mathcal{I}_q \arrow[rr, crossing over] \arrow[dd, equal] & & \mathcal{I}_1^q \arrow[ur, "\beta_1"] \arrow[rr,  crossing over] & & \mathcal{I}_1^q/\mathcal{I}_q  \arrow[ur, "\tau_{\mathcal{I}_1^q}"] \\
& \mathcal{I}_q \arrow[dl, equal] \arrow[rr] & & M(\mathcal{I}_q)  \arrow[rr] & & M(\mathcal{I}_q) / \mathcal{I}_q \arrow[from=dl, "\tau_{\mathcal{M}_q}"] \\
\mathcal{I}_q \arrow[rr] & & \mathcal{M}_q \arrow[ur, "\beta_2"] \arrow[from=uu, crossing over, hookrightarrow] \arrow[rr] & & \mathcal{I}_1^q/\mathcal{I}_q \oplus \mathcal{I}_2^q/\mathcal{I}_q \arrow[from=uu, crossing over] \\
\end{tikzcd}
\]
Here
\[
\beta_1\colon\mathcal I_1^q \rightarrow M(\mathcal I_q),\quad \beta_2\colon\mathcal M_q \rightarrow M(\mathcal I_q),
\]
are homomorphisms introduced in Remark \ref{rem_beta}, the arrow
\[
j_1\colon \mathcal{I}_1^q \hookrightarrow \mathcal{M}_q
\]
is the inclusion, and the arrow
\[
\iota_1 : \mathcal{I}_1^q / \mathcal{I}_q \rightarrow \mathcal{I}_1^q / \mathcal{I}_q \oplus \mathcal{I}_2^q / \mathcal{I}_q
\]
has the form $\iota_1(x) = (x, 0)$.

Notice that for every $b \in  \mathcal{I}_q$ and
$x \in \mathcal{I}_1^q$ one has
\[
(\beta_2 \circ j_1)(x) i(b) = i(j_1(x)b) = i(xb)=\beta_1(x)i(b).
\]
By the uniqueness of $\beta_1$, we get $\beta_2 \circ j_1 = \beta_1$. Thus the following diagram commutes
\[
\begin{tikzcd}
\mathcal I_1^q \arrow[r, "\beta_1"] \arrow[d, hookrightarrow] & M(\mathcal I_q) \arrow[d, equal] \\
 \mathcal M_q \arrow[r, "\beta_2"] & M(\mathcal I_q)
\end{tikzcd}
\]
Further, Remark \ref{rem_beta} implies that for  Busby invariants $\tau_{\mathcal I_1^q}$ and $\tau_{\mathcal M_q}$  the squares below are commutative
\[
\begin{tikzcd}
 M(\mathcal I_q) \arrow[r] & M(\mathcal I_q)/\mathcal I_q \\
 \mathcal I_1^q \arrow[r]  \arrow[u, "\beta_1"]& \mathcal I_1^q/\mathcal I_q \arrow[u, "\tau_{\mathcal I_1^q}"]
\end{tikzcd}\quad
\begin{tikzcd}
 M(\mathcal I_q) \arrow[r] & M(\mathcal I_q)/\mathcal I_q \\
 \mathcal M_q \arrow[r]  \arrow[u, "\beta_2"]& \mathcal I_1^q/\mathcal I_q
 \oplus \mathcal I_2^q/\mathcal I_q\arrow[u, "\tau_{\mathcal M_q}"]
\end{tikzcd}
\]
Hence the square
\[
\begin{tikzcd}
 \mathcal I_1^q/\mathcal I_q \arrow[r,"\tau_{\mathcal I_1^q}"] \arrow[d,"\iota_1"]& M(\mathcal I_q)/\mathcal I_q \arrow[d,equal]\\
 \mathcal I_1^q/\mathcal I_q\oplus\mathcal I_2^q/\mathcal I_q \arrow[r, "\tau_{\mathcal M_q}"] & M(\mathcal I_q)/\mathcal I_q
\end{tikzcd}
\]
is also commutative. Thus,
$\tau_{\mathcal{I}_1^q} = \tau_{\mathcal{M}_q} \circ \iota_1$. By the same arguments we get $\tau_{\mathcal I_2^q}=\tau_{\mathcal M_q}\circ\iota_2$, where
\[
\iota_2\colon\mathcal I_2^q/\mathcal I_q\rightarrow \mathcal{I}_1^q / \mathcal{I}_q \oplus \mathcal{I}_2^q / \mathcal{I}_q,\quad \iota_2 (y)=(0,y).
\]
Thus
\[ [\tau_{\mathcal{M}_q}] = ([\tau_{\mathcal{M}_q}\circ\iota_1], [\tau_{\mathcal{M}_q} \circ\iota_2]) = ([\tau_{\mathcal{I}_1^q}], [\tau_{\mathcal{I}_2^q}]).
\]
\end{proof}

In the following theorem we give a description of all ideals in $\mathcal E_{n,m}^q$.

\begin{theorem}\label{ideals_enmq}
Any ideal $J\subset\mathcal E_{n,m}^q$ coincides with one of $\mathcal I_q$,
$\mathcal I_1^q$, $\mathcal I_2^q$, $\mathcal M_q$.
\end{theorem}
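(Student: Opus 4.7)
The plan is to split the argument into two stages: first show that $\mathcal{I}_q$ is the unique minimal nonzero ideal of $\mathcal{E}_{n,m}^q$, and then classify the ideals of the quotient $\mathcal{E}_{n,m}^q/\mathcal{I}_q$ using the short exact sequence furnished by Lemma~\ref{lemextmq}.

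For the first stage, I use that the Fock representation $\pi_F^q$ is faithful (by the preceding theorem) and irreducible on $\mathcal{F}=\mathcal{F}_n\otimes\mathcal{F}_m$. A direct computation using the convenient form of $\pi_F^q$ shows $\pi_F^q((\mathbf{1}-Q)(\mathbf{1}-P))$ is the rank-one projection onto the vacuum $\Omega_n\otimes\Omega_m$, so $\pi_F^q(\mathcal{I}_q)=\mathbb{K}(\mathcal{F})$. The standard fact that any nonzero ideal of an irreducible $C^*$-subalgebra of $\mathbb{B}(\mathcal{F})$ containing the compacts must itself contain $\mathbb{K}(\mathcal{F})$ then gives $J\supset\mathcal{I}_q$ for every nonzero ideal $J$.

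For the second stage, Lemma~\ref{lemextmq} combined with the simplicity of $\mathcal{E}_{n,m}^q/\mathcal{M}_q\cong\mathcal{O}_n\otimes\mathcal{O}_m$ (earlier shown via the Kirchberg--Phillips theorem) yields the short exact sequence
\[
0\to(\mathcal{O}_m\otimes\mathbb{K})\oplus(\mathcal{O}_n\otimes\mathbb{K})\to\mathcal{E}_{n,m}^q/\mathcal{I}_q\to\mathcal{O}_n\otimes\mathcal{O}_m\to 0
\]
with simple direct-summand ideal and simple quotient. For any ideal $\bar J\subset\mathcal{E}_{n,m}^q/\mathcal{I}_q$, the intersection $\bar J\cap(\mathcal{M}_q/\mathcal{I}_q)$ is an ideal of a sum of two simple algebras, hence one of $\{0,\mathcal{I}_1^q/\mathcal{I}_q,\mathcal{I}_2^q/\mathcal{I}_q,\mathcal{M}_q/\mathcal{I}_q\}$, and the image of $\bar J$ in the simple quotient is $0$ or everything. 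The cases $\bar J\subset\mathcal{M}_q/\mathcal{I}_q$ and $\bar J\supset\mathcal{M}_q/\mathcal{I}_q$ directly yield one of the four expected ideals (or $\mathcal{E}_{n,m}^q$).

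The main obstacle is ruling out ``mixed'' ideals $\bar J$ with $\bar J\cap(\mathcal{M}_q/\mathcal{I}_q)\in\{0,\mathcal{I}_1^q/\mathcal{I}_q,\mathcal{I}_2^q/\mathcal{I}_q\}$ while $\bar J$ projects onto the simple quotient; such $\bar J$ must annihilate at least one $\mathcal{I}_j^q/\mathcal{I}_q$. I would resolve this by proving the key essentiality statement: the left annihilator of $\mathcal{I}_2^q/\mathcal{I}_q$ in $\mathcal{E}_{n,m}^q/\mathcal{I}_q$ coincides with $\mathcal{I}_1^q/\mathcal{I}_q$, and symmetrically with indices swapped. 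Concretely, using the faithful Fock representation with the identifications $\pi_F^q(\mathcal{I}_1^q)=\mathbb{K}(\mathcal{F}_n)\otimes\mathcal{O}_m^{(0)}$ and $\pi_F^q(\mathcal{I}_2^q)=\mathcal{O}_n^{(0)}\otimes\mathbb{K}(\mathcal{F}_m)$ from Propositions~\ref{str_I1q} and~\ref{str_I2q}, the condition $\pi_F^q(x)\cdot\pi_F^q(\mathcal{I}_2^q)\subset\mathbb{K}(\mathcal{F})$ applied to the rank-one operators $\mathbf{1}\otimes|\eta\rangle\langle\eta'|$ forces the slice map $\xi\mapsto\pi_F^q(x)(\xi\otimes\eta)$ to be compact $\mathcal{F}_n\to\mathcal{F}_n\otimes\mathcal{F}_m$ for every $\eta\in\mathcal{F}_m$. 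Combined with the explicit generator-level description of $\pi_F^q(\mathcal{E}_{n,m}^q)$ (whose elements have first-factor parts expressible via $S_\mu S_\nu^*$ times a unitary of the form $d_n(q^k)$), this places $\pi_F^q(x)$ in $\pi_F^q(\mathcal{I}_1^q)$, so $x\in\mathcal{I}_1^q$ by faithfulness. The essentiality yields the desired contradiction in the mixed case and completes the classification.
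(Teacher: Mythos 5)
Your two-stage skeleton is sound as far as it goes: the identification of $\pi_F^q\bigl((\mathbf 1-Q)(\mathbf 1-P)\bigr)$ with the vacuum projection, irreducibility and faithfulness of $\pi_F^q$ do give that every nonzero ideal contains $\mathcal I_q$, and the lattice analysis of ideals of $\mathcal M_q/\mathcal I_q\simeq(\mathcal O_m\otimes\mathbb K)\oplus(\mathcal O_n\otimes\mathbb K)$ together with simplicity of $\mathcal E_{n,m}^q/\mathcal M_q$ (which, by the way, is proved directly via the pure--infiniteness argument, not via Kirchberg--Phillips) is correct. The genuine gap is your ``key essentiality statement'' that the annihilator of $\mathcal I_2^q/\mathcal I_q$ in $\mathcal E_{n,m}^q/\mathcal I_q$ is exactly $\mathcal I_1^q/\mathcal I_q$: this carries essentially the entire content of the theorem, and the sketch you give does not prove it. What the slice-map step actually yields is that all operators $(\mathbf 1\otimes\eta')^*\pi_F^q(x)(\mathbf 1\otimes\eta)$ are compact on $\mathcal F_n$; for a general element of $\pi_F^q(\mathcal E_{n,m}^q)$ --- a norm limit of polynomials, not a polynomial --- no ``generator-level'' description converts this into membership in $\pi_F^q(\mathcal I_1^q)$. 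Indeed, compactness of all slices of an operator on $\mathcal F_n\otimes\mathcal F_m$ does not even imply membership in $\mathbb K(\mathcal F_n)\otimes\mathbb B(\mathcal F_m)$: for orthonormal sequences $(f_k)\subset\mathcal F_n$, $(g_k)\subset\mathcal F_m$ the operator $\sum_k |f_k\rangle\langle f_k|\otimes|g_k\rangle\langle g_k|$ has all slices compact but is at distance $1$ from $\mathbb K(\mathcal F_n)\otimes\mathbb B(\mathcal F_m)$. So any correct proof must exploit the fine structure of $\pi_F^q(\mathcal E_{n,m}^q)$ (note also that in a single fixed form of the Fock representation the two identifications $\pi_F^q(\mathcal I_1^q)=\mathbb K(\mathcal F_n)\otimes\mathcal O_m^{(0)}$ and $\pi_F^q(\mathcal I_2^q)=\mathcal O_n^{(0)}\otimes\mathbb K(\mathcal F_m)$ cannot both hold verbatim --- one of the two ideals appears twisted by the $d$-operators). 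The natural way to establish your lemma is to exhibit a representation of $\mathcal E_{n,m}^q$ with kernel exactly $\mathcal I_1^q$ in which the image of $\mathcal I_2^q$ acts nondegenerately, and that is precisely what the paper's generalized Wold decomposition (Theorem \ref{wold_dec}), combined with simplicity of $\mathcal M_q/\mathcal I_1^q\simeq\mathcal O_n\otimes\mathbb K$ and Corollary \ref{mq_unique}, delivers; as written, your lemma is an assertion, not an argument.

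Note, moreover, that the ``mixed'' case you labour over can be excluded at almost no cost, without any annihilator computation. The paper's own proof forward-references Theorem \ref{Onmqsimple} and Corollary \ref{mq_unique}, so you may use that $\mathcal M_q$ is the unique largest ideal: a proper ideal $J$ then never surjects onto $\mathcal E_{n,m}^q/\mathcal M_q$, and your problematic case is vacuous. Alternatively, if $J$ is proper and $J+\mathcal M_q=\mathcal E_{n,m}^q$ while $J\cap\mathcal M_q\in\{\mathcal I_q,\mathcal I_1^q,\mathcal I_2^q\}$, then $\mathcal E_{n,m}^q/J\simeq\mathcal M_q/(J\cap\mathcal M_q)$ would be a nonzero non-unital algebra ($\mathcal O_n\otimes\mathbb K$, $\mathcal O_m\otimes\mathbb K$ or their direct sum), contradicting unitality of a quotient of the unital algebra $\mathcal E_{n,m}^q$. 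With either of these replacing your essentiality lemma your argument closes, and it would then genuinely differ from the paper's proof, which instead applies the Wold decomposition to a representation $\pi$ with $\ker\pi=J$ and identifies $\ker\pi_2=\mathcal I_1^q$, $\ker\pi_3=\mathcal I_2^q$, $\ker\pi_4=\mathcal M_q$.
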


\begin{proof}
First we notice that $\mathcal I_1^q/\mathcal I_q\simeq \mathcal O_m\otimes\mathbb K$, $\mathcal I_2^q/\mathcal I_q\simeq \mathcal O_n\otimes\mathbb K$ are simple. Hence for any ideal $\mathcal J$ such that $\mathcal I_q\subseteq\mathcal J\subseteq\mathcal I_1^q$ or  $\mathcal I_q\subseteq\mathcal J\subseteq\mathcal I_2^q$,
one has  $\mathcal J=\mathcal I_q$, or $\mathcal J=\mathcal I_1^q$, or $\mathcal J=\mathcal I_2^q$.

Further, using the fact that $\mathcal M_q=\mathcal I_1^q+\mathcal I_2^q$ and $\mathcal I_q=\mathcal I_1^q\cap\mathcal I_2^q$ we get
\[
\mathcal M_q/\mathcal I_1^q\simeq\mathcal I_2^q/\mathcal I_q\simeq\mathcal O_n\otimes\mathbb K.
\]
So if $\mathcal I^q \subseteq \mathcal J\subseteq\mathcal M_q$, then again either
$\mathcal J=\mathcal I_1^q$ or $\mathcal J=\mathcal M_q$.

Below, see Theorem \ref{Onmqsimple}, we show that $\mathcal E_{n,m}^q/\mathcal M_q$ is simple and purely infinite. In particular, $\mathcal M_q$ contains any ideal in $\mathcal E_{n,m}^q$, see Corollary \ref{mq_unique}.

Let $\mathcal J\subset \mathcal E_{n,m}^q$ be an ideal and $\pi$ be a representation of $\mathcal{E}_{n,m}^q$ such that $\ker \pi = \mathcal{J}$. Notice that the Fock component $\pi_1$ in the Wold decomposition of $\pi$ is zero. Thus, by Theorem \ref{wold_dec},
\begin{equation}\label{pi_decomp}
\pi=\pi_2\oplus\pi_3\oplus\pi_4,
\end{equation}
and $\mathcal J=\ker\pi=\ker\pi_2\cap\ker\pi_3\cap\ker\pi_4$. Let us describe these kernels. Suppose that the component $\pi_2$ is non-zero. Since $\pi_2(\mathbf 1-Q)=0$ and $\pi_2(\mathbf 1-P)\ne 0$, we have
\[
\mathcal I_1^q\subseteq\ker\pi_2\subsetneq\mathcal M_q,
\]
implying $\ker\pi_2=\mathcal I_1^q$. Using the same arguments, one can deduce that if the component $\pi_3$ is non-zero, then $\ker\pi_3=\mathcal I_2^q$, and if $\pi_4$ is non-zero, then $\ker\pi_4=\mathcal M_q$.

Finally, if in (\ref{pi_decomp}) $\pi_2$ and $\pi_3$ are non-zero then $\mathcal J=\ker\pi=\mathcal I_q$. If either $\pi_2 \neq 0$ and $\pi_3 = 0$ or $\pi_3\neq 0$ and $\pi_2=0$, then either  $\mathcal J=\mathcal I_1^q$ or $\mathcal J=\mathcal I_2^q$. In the case $\pi_2=0$ and $\pi_3=0$ one has $\mathcal J=\ker\pi_4=\mathcal M_q$.
\end{proof}

\begin{corollary}\label{essential}
 All ideals in $\mathcal E_{n,m}^q$ are essential. The ideal $\mathcal I^q$ is the unique minimal ideal.
\end{corollary}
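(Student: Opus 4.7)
The plan is to read off both assertions directly from the complete ideal classification provided by Theorem~\ref{ideals_enmq}. That theorem lists the non-zero proper ideals of $\mathcal{E}_{n,m}^q$ as exactly $\mathcal{I}_q$, $\mathcal{I}_1^q$, $\mathcal{I}_2^q$, and $\mathcal{M}_q$. By construction
\[
\mathcal{I}_q = \mathcal{I}_1^q \cap \mathcal{I}_2^q \subseteq \mathcal{I}_1^q,\ \mathcal{I}_2^q,
\qquad
\mathcal{I}_1^q,\ \mathcal{I}_2^q \subseteq \mathcal{I}_1^q + \mathcal{I}_2^q = \mathcal{M}_q,
\]
so $\mathcal{I}_q$ is contained in every non-zero ideal. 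To conclude that $\mathcal{I}_q$ is the unique minimal non-zero ideal it only remains to verify $\mathcal{I}_q \neq 0$; this is immediate from the faithfulness of the Fock representation, since $\pi_F^q\bigl((\mathbf{1}-Q)(\mathbf{1}-P)\bigr)$ is the rank-one projection onto the vacuum vector $\Omega = \Omega_n \otimes \Omega_m$ (cf.\ the explicit form of $\pi_F^q$ in Remark~\ref{rem_fock}).

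For the essentiality statement, recall that a non-zero ideal $J$ of a $C^*$-algebra $A$ is essential precisely when $J \cap I \neq 0$ for every non-zero ideal $I \subseteq A$. Given any two non-zero ideals $J$ and $I$ of $\mathcal{E}_{n,m}^q$, the previous paragraph yields $\mathcal{I}_q \subseteq J$ and $\mathcal{I}_q \subseteq I$, whence $\mathcal{I}_q \subseteq J \cap I$. Since $\mathcal{I}_q \neq 0$, this forces $J \cap I \neq 0$, and $J$ is essential.

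There is no genuine obstacle here: all the substantive work has already been carried out in Theorem~\ref{ideals_enmq}, and the corollary amounts to a brief bookkeeping observation built on top of the classification together with the non-vanishing of $\mathcal{I}_q$.
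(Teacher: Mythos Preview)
Your proof is correct and matches the paper's intended argument: the corollary is meant to follow immediately from the ideal classification in Theorem~\ref{ideals_enmq}, and you have spelled out exactly the chain of inclusions $\mathcal{I}_q \subseteq \mathcal{I}_j^q \subseteq \mathcal{M}_q$ that makes $\mathcal{I}_q$ sit inside every non-zero ideal. Your explicit check that $\mathcal{I}_q \neq 0$ via the Fock representation is a helpful detail the paper leaves implicit.
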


In particular, the extension
\[ 0
\rightarrow \mathcal{I}_q \rightarrow \mathcal{M}_q \rightarrow \mathcal{I}_1^q/\mathcal{I}_q \oplus \mathcal{I}_2^q/\mathcal{I}_q \rightarrow 0
\]
is essential.
Indeed, the ideal $\mathbb K=\mathcal{I}_q\subset \mathcal E_{n,m}^q$ is the unique minimal ideal. Since an ideal of an ideal in a $C^*$-algebra is an ideal in the whole algebra, $\mathcal{I}_q$ is the unique minimal ideal in $\mathcal M_q$, thus it is essential in $\mathcal M_q$.

The following proposition is a corollary of Voiculescu's Theorem, see Theorem 15.12.3 of \cite{Black}.

\begin{proposition}\label{Voiculescu}
Let $E_1, E_2$ be two essential extensions of a nuclear $C^*$-algebra $A$ by $\mathbb{K}$. If $[E_1] = [E_2] \in \mathsf{Ext}(A, \mathbb{K})$ then $E_1 \simeq E_2$.
\end{proposition}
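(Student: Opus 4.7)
The plan is to reduce the statement to the Busby invariant formulation of $\mathsf{Ext}$ and apply Voiculescu's absorption theorem verbatim. First, I would recall that an extension
\[
0\to\mathbb K\to E\to A\to 0
\]
is uniquely determined up to isomorphism of extensions by its Busby invariant $\tau\colon A\to Q=M(\mathbb K)/\mathbb K$, that two such extensions are isomorphic iff the corresponding Busby invariants are unitarily equivalent (conjugate by the image in $Q$ of a unitary in $M(\mathbb K)$), and that essentiality of the extension is equivalent to injectivity of $\tau$ (see Remark \ref{rem_beta} for the construction of $\tau$). Hence to the extensions $E_1,E_2$ correspond injective $*$-homomorphisms $\tau_1,\tau_2\colon A\to Q$, and our task is to show that they are unitarily equivalent in $Q$.

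Next I would unwind the hypothesis $[\tau_1]=[\tau_2]$ in $\mathsf{Ext}(A,\mathbb K)$. By definition of the equivalence relation on extensions, this means there exist split (i.e. trivial) Busby invariants $\sigma_1,\sigma_2\colon A\to Q$ such that
\[
\tau_1\oplus\sigma_1\sim_u\tau_2\oplus\sigma_2,
\]
where $\sim_u$ denotes unitary equivalence in $Q(\mathcal H\oplus\mathcal H)$. This is the content of the Stinespring-style addition in $\mathsf{Ext}$.

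Then I would invoke Voiculescu's absorption theorem (Theorem 15.12.3 of \cite{Black}): if $A$ is separable and nuclear and $\tau\colon A\to Q$ is an injective (i.e. essential) Busby invariant, then for any split Busby invariant $\sigma\colon A\to Q$ one has $\tau\oplus\sigma\sim_u\tau$. Applying this to both $\tau_1$ and $\tau_2$ and chaining with the middle equivalence obtained above gives
\[
\tau_1\sim_u\tau_1\oplus\sigma_1\sim_u\tau_2\oplus\sigma_2\sim_u\tau_2,
\]
from which $E_1\simeq E_2$ follows. The only genuinely non-trivial ingredient is the absorption step, so the ``proof'' is essentially a citation; the main care needed is to verify that the hypotheses of Voiculescu's theorem (separability and nuclearity of $A$, non-degeneracy of the representations, injectivity of $\tau_1,\tau_2$) are all in force, which they are in the intended application where $A$ is a quotient/summand derived from $\mathcal E_{n,m}^q$.
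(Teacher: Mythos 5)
Your argument is correct and is exactly the route the paper takes: the paper offers no details beyond citing Voiculescu's absorption theorem (Theorem 15.12.3 of \cite{Black}), and your reduction to Busby invariants, the unwinding of $[E_1]=[E_2]$ via addition of split extensions, and the absorption chain $\tau_1\sim\tau_1\oplus\sigma_1\sim\tau_2\oplus\sigma_2\sim\tau_2$ is the standard way to spell that citation out. The only hypothesis worth flagging is separability of $A$ (needed for Voiculescu's theorem and satisfied in the intended application), which you already note.
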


\begin{theorem}
For any $q\in\mathbb C$, $|q|=1$, one has
$\mathcal{M}_q \simeq \mathcal{M}_1$.
\end{theorem}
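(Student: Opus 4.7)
The plan is to reduce the isomorphism problem to a single application of Voiculescu's theorem (Proposition~\ref{Voiculescu}) applied to the essential extension
\[
0 \to \mathcal{I}_q \to \mathcal{M}_q \to \mathcal{I}_1^q/\mathcal{I}_q \oplus \mathcal{I}_2^q/\mathcal{I}_q \to 0
\]
coming from Lemma~\ref{lemextmq}. First I will verify the hypotheses of Proposition~\ref{Voiculescu}: essentiality follows from Corollary~\ref{essential} together with the fact that $\mathcal{I}_q$ is the unique minimal ideal in $\mathcal{M}_q$; the ideal satisfies $\mathcal{I}_q \simeq \mathbb{K}\otimes\mathbb{K}\simeq \mathbb{K}$ by Proposition~\ref{str_Iq_1}; and the quotient $\mathcal{O}_m \otimes \mathbb{K} \oplus \mathcal{O}_n \otimes \mathbb{K}$ is nuclear since $\mathcal{O}_n$ and $\mathcal{O}_m$ are nuclear. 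It therefore suffices to show $[\mathcal{M}_q] = [\mathcal{M}_1]$ in $\mathsf{Ext}(\mathcal{O}_m \otimes \mathbb{K} \oplus \mathcal{O}_n \otimes \mathbb{K}, \mathbb{K})$, because Voiculescu's theorem will then upgrade this equality in $\mathsf{Ext}$ to an honest $C^*$-isomorphism $\mathcal{M}_q \simeq \mathcal{M}_1$.

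To establish the equality of Ext classes I will invoke the direct-sum splitting from Lemma~\ref{lem_qdecomp}, which gives
\[
[\mathcal{M}_q] = \bigl([\mathcal{I}_1^q],\, [\mathcal{I}_2^q]\bigr) \in \mathsf{Ext}(\mathcal{O}_m \otimes \mathbb{K}, \mathbb{K}) \oplus \mathsf{Ext}(\mathcal{O}_n \otimes \mathbb{K}, \mathbb{K}).
\]
The decisive input is then Theorem~\ref{i1i2q_comm}, which presents the sequences $0 \to \mathcal{I}_q \to \mathcal{I}_j^q \to \mathcal{I}_j^q/\mathcal{I}_q \to 0$ for $j=1,2$ as isomorphic (as extensions, not merely at the level of the middle term) to the $q$-independent standard extensions
\[
0 \to \mathbb{K}\otimes\mathbb{K} \to \mathcal{O}_m^{(0)} \otimes \mathbb{K} \to \mathcal{O}_m \otimes \mathbb{K} \to 0, \quad 0 \to \mathbb{K}\otimes\mathbb{K} \to \mathcal{O}_n^{(0)} \otimes \mathbb{K} \to \mathcal{O}_n \otimes \mathbb{K} \to 0.
\]
From this I read off $[\mathcal{I}_j^q] = [\mathcal{I}_j^1]$ for $j=1,2$, and hence $[\mathcal{M}_q] = [\mathcal{M}_1]$, completing the argument.

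The point that requires care is verifying that the isomorphism of extensions in Theorem~\ref{i1i2q_comm} really does imply equality of the corresponding Busby invariants in one and the same Ext group, once the canonical identifications of the ideal $\mathcal{I}_q$ across different values of $q$ (via $\Delta_{\alpha^q}\circ\Delta_{q,1}^{-1}$ on the left, and the analogous map on the right) and of the quotients $\mathcal{I}_j^q/\mathcal{I}_q \simeq \mathcal{O}_{m,n}\otimes\mathbb{K}$ are made. Concretely, I will trace the commutative diagram of Theorem~\ref{i1i2q_comm} to observe that, after these identifications, the Busby invariant of $\mathcal{I}_j^q$ literally equals that of the fixed target extension $\mathcal{O}_{m,n}^{(0)} \otimes \mathbb{K}$ and is therefore independent of $q$. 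Once this bookkeeping is settled, the remaining steps are immediate applications of the cited results.
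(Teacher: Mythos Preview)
Your proposal is correct and follows essentially the same approach as the paper's own proof: use Theorem~\ref{i1i2q_comm} to see that $[\mathcal{I}_1^q]$ and $[\mathcal{I}_2^q]$ are $q$-independent, combine via Lemma~\ref{lem_qdecomp} to conclude $[\mathcal{M}_q]$ is $q$-independent, and then invoke Corollary~\ref{essential} and Proposition~\ref{Voiculescu}. Your write-up is in fact more detailed than the paper's three-line proof, spelling out the verification of the hypotheses of Proposition~\ref{Voiculescu} and the bookkeeping behind the identifications, which the paper leaves implicit.
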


\begin{proof}
By Theorem \ref{i1i2q_comm}, $[\mathcal{I}_1^q]\in\mathsf{Ext}(\mathcal{O}_m \otimes \mathbb{K},\mathbb{K})$, and  $[\mathcal{I}_2^q] \in \mathsf{Ext}(\mathcal{O}_n \otimes \mathbb{K}, \mathbb{K})$ do not depend on $q$. By Lemma \ref{lem_qdecomp}, $[\mathcal{M}_q]$ does not depend on $q$. Thus by Corollary \ref{essential} and Proposition \ref{Voiculescu}, $\mathcal{M}_q \simeq \mathcal{M}_1$.
\end{proof}

\subsection{Simplicity and pure infiniteness of $\mathcal{O}_n \otimes_q \mathcal{O}_m$}
The next step is to show that the quotient
$\mathcal{O}_n \otimes_q \mathcal{O}_m=\mathcal{E}_{n,m}^q/\mathcal{M}_q$, being nuclear, is also simple and purely infinite.

It is easy to see that
\[
\mathcal{M}_q=c.l.s.\{s_{\mu_1}t_{\nu_1}(\mathbf{1}-P)^{\varepsilon_1}(\mathbf 1- Q)^{\varepsilon_2}t_{\nu_2}^*s_{\mu_2}^*\},
\]
where $\mu_j\in\Lambda_n$, $\nu_j\in\Lambda_m$, $j=1,2$, and $\varepsilon_j\in\{0,1\}$,
$\varepsilon_1+\varepsilon_2\ne 0$.

We denote the generators of $\mathcal{O}_n\otimes_q\mathcal{O}_m$ in the same way as generators of $\mathcal E_{n,m}^q$. Notice, that for any $k\in\mathbb N$,  the following
relations hold in  $\mathcal{O}_n\otimes_q\mathcal{O}_m$
\[
 \sum_{\lambda\in\Lambda_n,\, |\lambda|=k} s_{\lambda} s_{\lambda}^*=\mathbf{1},\quad
 \sum_{\nu\in\Lambda_m,\, |\nu|=k} t_{\nu} t_{\nu}^*=\mathbf{1},
\]
and
\[
\mathcal{O}_n \otimes_q \mathcal{O}_m=c.l.s.\{s_{\mu_1}s_{\mu_2}^* t_{\nu_1}t_{\nu_2}^*,\ \mu_i\in\Lambda_n,\ \nu_j\in\Lambda_m\}.
\]
Consider the action $\alpha$ of $\mathbb{T}^2$ on $\mathcal O_n\otimes_q\mathcal O_m$,
\[
\alpha_{\varphi_1,\varphi_2}(s_j)=e^{2\pi i\varphi_1}s_j,\quad \alpha_{\varphi_1,\varphi_2}(t_r)=e^{2\pi i\varphi_2} t_r,\quad j=\overline{1,n},\ r=\overline{1,m}.
\]
Construct the corresponding faithful conditional expectation $E_{\alpha}$, and
denote by $\mathcal A_q$ the fixed point $C^*$-algebra of $(\mathcal O_n\otimes_q \mathcal O_m)^{\alpha}$, see Section \ref{group_action}.
Similarily to the case of $\mathcal E_{n,m}^q$, one has
\begin{align*}
E_{\alpha}(s_{\mu_1}s_{\mu_2}^*t_{\nu_1}t_{\nu_2}^*)&=0,\quad \mbox{ if either}\  |\mu_1|\ne |\mu_2|\ \mbox{ or}\  |\nu_1|\ne |\nu_2|,\\
E_{\alpha}(s_{\mu_1}s_{\mu_2}^*t_{\nu_1}t_{\nu_2}^*)&=s_{\mu_1}s_{\mu_2}^*t_{\nu_1}t_{\nu_2}^*\quad \mbox{ if}\  |\mu_1|=|\mu_2|\ \mbox{ and}\  |\nu_1|=|\nu_2|.
\end{align*}

\begin{lemma} \label{lem_comm_rel}
If\/ $\nu_1, \nu_2 \in\Lambda_m$, then
\[
s_j t_{\nu_1}t_{\nu_2}^*=\overline{q}^{|\nu_1|-|\nu_2|} t_{\nu_1}t_{\nu_2}^* s_j,\quad
s_j^* t_{\nu_1}t_{\nu_2}^*=q^{|\nu_1|-|\nu_2|}t_{\nu_1}t_{\nu_2}^* s_j^*,\quad
j=\overline{1,n}.
\]
If\/ $\mu_1, \mu_2 \in\Lambda_n$, then
\[
t_i s_{\mu_1}s_{\mu_2}^*=q^{|\mu_1|-|\mu_2|} s_{\mu_1}s_{\mu_2}^* t_i,\quad
t_i^* s_{\mu_1}s_{\mu_2}^*=\overline{q}^{|\mu_1|-|\mu_2|}s_{\mu_1}s_{\mu_2}^* t_i^*,
\quad i=\overline{1,m}.
\]
\end{lemma}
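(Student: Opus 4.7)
The plan is to derive the four identities by iterating the basic commutation relations between single generators. The starting point is the set of four equivalent relations
\[
t_r s_j = q\, s_j t_r, \quad s_j^* t_r = q\, t_r s_j^*, \quad t_r^* s_j = \overline{q}\, s_j t_r^*, \quad s_j^* t_r^* = \overline{q}\, t_r^* s_j^*,
\]
each obtained from any other by taking the adjoint and using $|q|=1$. The first one is the derived identity noted at the start of Section~3, the second is one of the defining relations \eqref{baseqrel}, and the other two are their adjoints.

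The first claim then follows by a simple two-step induction. Using $t_r s_j = q s_j t_r$ (equivalently $s_j t_r = \overline{q} t_r s_j$), iteration over a word $\nu_1 = (\nu_{1,1},\dots,\nu_{1,k})$ gives $s_j t_{\nu_1} = \overline{q}^{|\nu_1|} t_{\nu_1} s_j$. Likewise, from $t_r^* s_j = \overline{q} s_j t_r^*$ (equivalently $s_j t_r^* = q t_r^* s_j$) one gets $s_j t_{\nu_2}^* = q^{|\nu_2|} t_{\nu_2}^* s_j$. Composing,
\[
s_j t_{\nu_1} t_{\nu_2}^* = \overline{q}^{|\nu_1|} t_{\nu_1} s_j t_{\nu_2}^* = \overline{q}^{|\nu_1|} q^{|\nu_2|}\, t_{\nu_1} t_{\nu_2}^* s_j = \overline{q}^{|\nu_1|-|\nu_2|}\, t_{\nu_1} t_{\nu_2}^* s_j.
\]
The second identity is obtained analogously: $s_j^* t_{\nu_1} = q^{|\nu_1|} t_{\nu_1} s_j^*$ from $s_j^* t_r = q t_r s_j^*$, and $s_j^* t_{\nu_2}^* = \overline{q}^{|\nu_2|} t_{\nu_2}^* s_j^*$ from $s_j^* t_r^* = \overline{q} t_r^* s_j^*$, which combine to give the stated formula.

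The last two identities (moving $t_i$ or $t_i^*$ past $s_{\mu_1}s_{\mu_2}^*$) are proved in exactly the same way, swapping the roles of $s$ and $t$: one derives $t_i s_{\mu_1} = q^{|\mu_1|} s_{\mu_1} t_i$ and $t_i s_{\mu_2}^* = \overline{q}^{|\mu_2|} s_{\mu_2}^* t_i$, and multiplies; analogously for $t_i^*$. There is no genuine obstacle here: the only thing to keep in mind is the bookkeeping of which of $q$ and $\overline{q}$ is picked up in each elementary move, which is controlled uniformly by whether the two generators being exchanged are both of the same type (creation/annihilation) or of opposite types. Note that the relations hold already in $\mathcal{E}_{n,m}^q$, so no use of the quotient $\mathcal{O}_n\otimes_q\mathcal{O}_m$ is required.
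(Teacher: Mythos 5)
Your proof is correct and is exactly the straightforward iteration of the single-generator relations $s_j^*t_r=qt_rs_j^*$, $t_rs_j=qs_jt_r$ (the latter derived at the start of Section~3 for $|q|=1$) and their adjoints that the paper leaves implicit, since it states the lemma without proof. The bookkeeping of $q$ versus $\overline q$ in each of the four identities checks out, and your remark that the relations already hold in $\mathcal{E}_{n,m}^q$ and hence pass to the quotient is accurate.
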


As in the proof of Proposition \ref{enmq_fixpoint}, denote
\begin{align*}
    \mathcal A_1^0&=\mathbb C,\quad \mathcal{A}_1^k=\mbox{span}\{s_{\mu_1}s_{\mu_2}^*,\ |\mu_1|=|\mu_2|=k,\ \mu_i\in\Lambda_n\},\quad k\in\mathbb N,
    \\
    \mathcal A_2^0&=\mathbb C,\quad \mathcal{A}_2^k=\mbox{span}\{t_{\nu_1}t_{\nu_2}^*,\ |\nu_1|=|\nu_2|=k,\ \nu_i\in\Lambda_m\},\quad k\in\mathbb N.
\end{align*}
Recall also that $\mathcal{A}_1^k\simeq M_{n^k}(\mathbb{C})$ and $\mathcal{A}_2^k\simeq M_{m^k}(\mathbb{C})$, see \cite{cun}.

Put $\mathcal{A}_q^{0}:=\mathbb{C}$,
\[
\mathcal{A}_q^k:=\sum_{k_1+k_2=k} \mathcal{A}_1^{k_1}\cdot\mathcal{A}_2^{k_2},
\]
and set
\[
\mathcal{A}_q=\overline{\bigcup_{k\in\mathbb{Z}_{+}} \mathcal{A}_q^k}.
\]
By Lemma \ref{lem_comm_rel}, for any
$x\in\mathcal{A}_1^k$ and $y\in\mathcal{A }_2^l$ one has $xy=yx$.
Thus $\mathcal{A}_q$ is an AF-subalgebra in $\mathcal{O}_n \otimes_q \mathcal{O}_m$ and $\mathcal{A}_{q_1}\simeq\mathcal{A}_{q_2}$ for any $q_1,q_2\in\mathbb{C}$, $|q_1|=|q_2|=1$.

To prove pure infiniteness of $\mathcal O_n\otimes_q\mathcal O_m$ we essentially follow Chapter V.4 of \cite{dav}.

Denote by ${Fin}_q^k$ the span of monomials $s_{\mu_1}s_{\mu_2}^* t_{\nu_1}t_{\nu_2}^*$ such that
\[
\max\{|\mu_1|,|\mu_2|\}+\max\{|\nu_1|,|\nu_2|\}\le k.
\]

\begin{proposition}
For any $k\in\mathbb{N}$ there exists an isometry $w_k\in\mathcal{O}_n\otimes_q \mathcal O_m$ such that
\[
E_{\alpha}(x)=w_k^* x w_k,\quad \mbox{for any}\ x\in Fin_q^k,
\]
and $w_k^* y w_k=y$ for any $y\in\mathcal{A}_q^k$.
\end{proposition}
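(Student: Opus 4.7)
The plan is to build $w_k$ as a product $w_k := UV$ of two ``Cuntz-type'' isometries, one assembled from the $s$-generators and one from the $t$-generators, each designed to implement the $\mathbb T$-gauge averaging on its respective Cuntz subalgebra by a Davidson-type construction (cf. Chapter V.4 of \cite{dav}). Concretely, I would set
\[
Y_s := s_1^{k}s_2 \in C^*(s_1,\ldots,s_n),\qquad Y_t := t_1^{k}t_2 \in C^*(t_1,\ldots,t_m),
\]
which are isometries since each $s_j$, $t_r$ is one, and then
\[
U := \sum_{|\alpha|=k} s_\alpha\, Y_s\, s_\alpha^*,\qquad V := \sum_{|\rho|=k} t_\rho\, Y_t\, t_\rho^*.
\]
The identities $\sum_{|\alpha|=k}s_\alpha s_\alpha^* = 1$, $Y_s^*Y_s=1$ (and their $t$-analogues) immediately give $U^*U = V^*V = 1$, hence $w_k^*w_k = V^*U^*UV = 1$.

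The decisive algebraic input is the vanishing identity
\[
\Psi_j(Y_s^*)\,Y_s := \Bigl(\sum_{|\gamma|=j}s_\gamma Y_s^* s_\gamma^*\Bigr)Y_s \;=\; s_1^{j}s_2^* s_1^{*j}s_2 \;=\; 0, \qquad 1\le j \le k,
\]
which follows at once from $s_1^* s_2 = 0$; the analogous identity holds for $Y_t$. Using this, I would verify
\[
U^* s_{\mu_1}s_{\mu_2}^* U \;=\; \delta_{|\mu_1|,|\mu_2|}\, s_{\mu_1}s_{\mu_2}^*,\qquad V^* t_{\nu_1}t_{\nu_2}^* V \;=\; \delta_{|\nu_1|,|\nu_2|}\, t_{\nu_1}t_{\nu_2}^*
\]
for all $|\mu_i|,|\nu_j|\le k$. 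Expanding $\sum_{\alpha,\beta}s_\alpha Y_s^* s_\alpha^* s_{\mu_1}s_{\mu_2}^* s_\beta Y_s s_\beta^*$ and using the prefix collapses $s_\alpha^* s_{\mu_1} = s_{\alpha'}^*$ (with $\alpha = \mu_1\alpha'$) and $s_{\mu_2}^* s_\beta = s_{\beta'}$ (with $\beta = \mu_2\beta'$), the equal-length case reduces to $\sum_{\alpha'}s_{\mu_1}s_{\alpha'}s_{\alpha'}^*s_{\mu_2}^* = s_{\mu_1}s_{\mu_2}^*$, whereas the unequal-length case (say $|\mu_1|<|\mu_2|$) collapses to $s_{\mu_1}\,\Psi_{k-|\mu_2|}\bigl(\Psi_{|\mu_2|-|\mu_1|}(Y_s^*)Y_s\bigr)\,s_{\mu_2}^* = 0$ by the vanishing identity.

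Finally, to combine the two pieces, observe that $U$ and $V$ are $\mathbb Z^2$-homogeneous with respect to $\alpha$ of bidegrees $(k+1,0)$ and $(0,k+1)$, and that the defining relations of $\mathcal O_n\otimes_q\mathcal O_m$ yield the standard $q$-commutation $AB = \bar q^{\,a_1b_2 - a_2b_1}BA$ for any homogeneous $A,B$ of bidegrees $(a_1,a_2),(b_1,b_2)$. Moving $U$ past the $t$-factor and $V$ past the $s$-factor of $x = s_{\mu_1}s_{\mu_2}^* t_{\nu_1}t_{\nu_2}^*\in Fin_q^k$, and invoking the two displayed identities above, one obtains
\[
w_k^* x w_k \;=\; q^{(k+1)(|\nu_1|-|\nu_2|)}\,\bar q^{(k+1)(|\mu_1|-|\mu_2|)}\,\delta_{|\mu_1|,|\mu_2|}\delta_{|\nu_1|,|\nu_2|}\, s_{\mu_1}s_{\mu_2}^* t_{\nu_1}t_{\nu_2}^*.
\]
Each Kronecker delta renders its companion $q$-phase trivial, so $w_k^* x w_k = E_\alpha(x)$; specialising to $y\in\mathcal A_q^k$ gives $w_k^* y w_k = y$ because $E_\alpha$ is already the identity there. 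The only nontrivial step in the whole argument is the vanishing identity $\Psi_j(Y_s^*)Y_s = 0$, but this is a one-line computation from $s_1^*s_2=0$; the rest is bookkeeping with the Cuntz relations and the $q$-commutations.
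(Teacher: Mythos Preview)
Your argument is correct and follows essentially the same route as the paper: build $w_k$ as the product of two Cuntz--Davidson isometries (one from the $s$-family, one from the $t$-family), verify that each factor implements the gauge expectation on its own Cuntz subalgebra via the vanishing of $\Psi_j(Y^*)Y$, and then use the $q$-commutation between the $s$- and $t$-subalgebras to finish. The paper takes $s_1^{2k}s_2$ and $t_1^{2k}t_2$ (the standard Davidson choice), whereas your shorter words $s_1^{k}s_2$, $t_1^{k}t_2$ already suffice because $||\mu_1|-|\mu_2||\le k$ on $Fin_q^k$; the order $UV$ versus the paper's $w_{k,2}w_{k,1}$ is immaterial.

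One caveat on presentation: your blanket assertion that ``$AB=\bar q^{\,a_1b_2-a_2b_1}BA$ for any homogeneous $A,B$'' is false as stated (take $A=s_1$, $B=s_2$, both of bidegree $(1,0)$). What is true, and what you actually use, is that this identity holds when one of $A,B$ lies in $C^*(s_1,\dots,s_n)$ and the other in $C^*(t_1,\dots,t_m)$; this follows from Lemma~\ref{lem_comm_rel}. With that correction the computation goes through exactly as you wrote.
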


\begin{proof}
Let $s_{\gamma}=s_1^{2k}s_2$ and $t_{\gamma}=t_1^{2k}t_2$. Consider the isometries
\[
w_{k,1}=\sum_{|\delta|=k,\delta\in\Lambda_n} s_{\delta}s_{\gamma}s_{\delta}^*,
\]
and
\[
w_{k,2}=\sum_{|\lambda|=k,\lambda\in\Lambda_m} t_{\lambda}t_{\gamma}t_{\lambda}^*.
\]
Then, see Lemma V.4.5 of \cite{dav},
\[
w_{k,1}^* s_{\mu_1}s_{\mu_2}^* w_{k,1}=0,\quad \mbox{if}\ |\mu_1|\ne |\mu_2|,\ |\mu_i|\le k, \mu_i\in\Lambda_n,
\]
and
\[
w_{k,1}^* s_{\mu_1}s_{\mu_2}^* w_{k,1}=s_{\mu_1}s_{\mu_2}^*,\quad \mbox{if}\ |\mu_1|=|\mu_2|,\ |\mu_i|\le k, \mu_i\in\Lambda_n.
\]
Analogously,
\[
w_{k,2}^* t_{\nu_1}t_{\nu_2}^* w_{k,2}=0,\quad \mbox{if}\ |\nu_1|\ne |\nu_2|,\ |\nu_i|\le k, \nu_i\in\Lambda_m,
\]
and
\[
w_{k,2}^* t_{\nu_1}t_{\nu_2}^* w_{k,2}=t_{\nu_1}t_{\nu_2}^*,\quad \mbox{if}\ |\nu_1|=|\nu_2|,\ |\nu_i|\le k, \nu_i\in\Lambda_m.
\]
By Lemma \ref{lem_comm_rel} we get
\begin{align*}
w_{k,1}t_{\nu_1}t_{\nu_2}^*&=\overline{q}^{(|\nu_1|-|\nu_2|)(2k+1)}t_{\nu_1}t_{\nu_2}^* w_{k,1},
\\
w_{k,1}^*t_{\nu_1}t_{\nu_2}^*&=q^{(|\nu_1|-|\nu_2|)(2k+1)}t_{\nu_1}t_{\nu_2}^* w_{k,1}^*,
\\
w_{k,2}s_{\mu_1}s_{\mu_2}^*&=q^{(|\mu_1|-|\mu_2|)(2k+1)}s_{\mu_1}s_{\mu_2}^* w_{k,2},
\\
w_{k,2}^*s_{\mu_1}s_{\mu_2}^*&=\overline{q}^{(|\mu_1|-|\mu_2|)(2k+1)}s_{\mu_1}s_{\mu_2}^* w_{k,2}^*.
\end{align*}
Then
\[
w_{k,2}w_{k,1}=q^{(2k+1)^2}w_{k,1}w_{k,2},\quad w_{k,2}^*w_{k,1}=\overline{q}^{(2k+1)^2}w_{k,1}w_{k,2}^*.
\]
Let $w_k=w_{k,2}w_{k,1}$. Evidently $w_k$ is an isometry. Then for any $|\mu_i|\le k$ and $|\nu_i|\le k$ one has
\[
w_k^* s_{\mu_1}s_{\mu_2}^* t_{\nu_1}t_{\nu_2}^* w_k=q^{\left((|v_1|-|v_2|)-(|\mu_1|-|\mu_2|)\right)(2k+1)}w_{k,1}^* s_{\mu_1}s_{\mu_2}^*w_{k,1}w_{k,2}^* t_{\nu_1}t_{\nu_2}^* w_{k,2},
\]
implying that for any  $|\mu_i| \le k$, and $|\nu_i|\le k$,
\[
w_k^* s_{\mu_1}s_{\mu_2}^* t_{\nu_1}t_{\nu_2}^* w_k=0,\quad \mbox{if}\ |\mu_1|\ne |\mu_2|\ \mbox{or}\ |\nu_1|\ne |\nu_2|,
\]
and
\[
w_k^* s_{\mu_1}s_{\mu_2}^*t_{\nu_1}t_{\nu_2}^* w_k=s_{\mu_1}s_{\mu_2}^*t_{\nu_1}t_{\nu_2}^*,\quad \mbox{if}\ |\mu_1|= |\mu_2|\ \mbox{and}\ |\nu_1|= |\nu_2|.
\]
Hence, for any $x\in Fin_q^k$ one has $w_k^* x w_k=E_{\alpha} (x)$ and
$w_k^* y w_k=y$ for $y\in\mathcal A_q^k$.
\end{proof}

\begin{remark}
Since $\mathcal{A}_q^k$ is finite-dimensional, it is a direct sum of full matrix algebras, where matrix units are represented by  $s_{\mu_1}t_{\nu_1}t_{\nu_2}^* s_{\mu_2}^*$, $|\mu_1|=|\mu_2|$, $|\nu_1|=|\nu_2|$ and $|\mu_1|+|\nu_1|=k$. In particular, any minimal projection in $\mathcal{A}_q^k$ is unitary equivalent in $\mathcal{A}_q^k$ to a ``matrix-unit projection'' having form $s_{\mu_1}t_{\nu_1}t_{\nu_1}^* s_{\mu_1}^*$ with $|\mu_1|+|\nu_1|=k$. So any minimal projection in $\mathcal{A}_q^k$ has the form $u\,u^*$ for some isometry $u\in\mathcal{E}_{n,m}^q$.
\end{remark}

The following statement is  the main result of this Subsection.

\begin{theorem}
For any non-zero $x\in \mathcal{O}_n \otimes_q \mathcal{O}_m$ with $|q|=1$, there  exist $a,b\in \mathcal{O}_n \otimes_q \mathcal{O}_m$ such that $axb=\mathbf1$.
\end{theorem}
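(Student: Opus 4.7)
The plan is a standard Cuntz-type dichotomy argument, using the isometry $w_k$ and the minimal-projection remark just established. My first step would be to reduce to the positive case: if $x\neq 0$ then $z:=x^*x\ge 0$ is nonzero, and any factorization $a'zb'=\mathbf 1$ yields $(a'x^*)xb'=\mathbf 1$. So I may assume from the outset that $x\ge 0$. Since $E_\alpha$ is faithful on positive elements, $E_\alpha(x)\ne 0$, so after rescaling I may assume $\|E_\alpha(x)\|=1$.

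Next, fix a small $\varepsilon\in(0,1/3)$ and approximate. Because $\bigcup_k \mathrm{Fin}_q^k$ is dense in $\mathcal O_n\otimes_q\mathcal O_m$, I can choose $k$ and $y\in\mathrm{Fin}_q^k$ with $\|y-x\|<\varepsilon$; replacing $y$ by $(y+y^*)/2$ keeps $y\in\mathrm{Fin}_q^k$ and makes $y$ self-adjoint. Then $E_\alpha(y)\in\mathcal A_q^k$ is self-adjoint and $\|E_\alpha(y)-E_\alpha(x)\|<\varepsilon$, so its largest eigenvalue $\lambda$ satisfies $\lambda\ge 1-\varepsilon>0$. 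Apply the preceding proposition to get an isometry $w_k\in\mathcal O_n\otimes_q\mathcal O_m$ with $w_k^*yw_k=E_\alpha(y)$.

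The key step is extracting an isometry from a minimal projection. Since $\mathcal A_q^k$ is finite-dimensional, I can choose a minimal projection $p\in\mathcal A_q^k$ lying under the spectral projection of $E_\alpha(y)$ for the top eigenvalue $\lambda$, so that $E_\alpha(y)\,p=\lambda p$. By the remark, $p=uu^*$ for an isometry $u\in\mathcal O_n\otimes_q\mathcal O_m$. Then
\[
u^*E_\alpha(y)\,u = u^*(E_\alpha(y)\,p)\,u = \lambda\, u^*uu^*u = \lambda\cdot\mathbf 1,
\]
and therefore $u^*w_k^*y\,w_ku=\lambda\cdot\mathbf 1$. Since $w_k$ and $u$ are isometries,
\[
\|u^*w_k^*x\,w_ku-\lambda\cdot\mathbf 1\|=\|u^*w_k^*(x-y)\,w_ku\|\le\|x-y\|<\varepsilon,
\]
so $c:=u^*w_k^*x\,w_ku$ is invertible in $\mathcal O_n\otimes_q\mathcal O_m$ (as $\lambda-\varepsilon>0$). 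Taking $a=c^{-1}u^*w_k^*$ and $b=w_ku$ gives $axb=\mathbf 1$, and undoing the reduction $z=x_{\text{orig}}^*x_{\text{orig}}$ yields the statement for the original $x$.

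The main obstacle I anticipate is making the minimal-projection selection rigorous: I need $p$ minimal in $\mathcal A_q^k$ (so that the remark applies and yields $p=uu^*$) while simultaneously having $p E_\alpha(y) p=\lambda p$ with $\lambda$ close to $1$. This is handled by decomposing the finite-dimensional $\mathcal A_q^k$ into its matrix summands, picking the summand containing the largest eigenvalue of $E_\alpha(y)$, and choosing $p$ to be a minimal projection of that summand sitting inside the corresponding spectral subspace; everything else is norm estimates and the universal property of isometries.
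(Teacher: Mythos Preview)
Your proposal is correct and follows essentially the same route as the paper's proof: reduce to a positive element, approximate by a self-adjoint element of $\mathrm{Fin}_q^k$, apply $w_k$ to land in $\mathcal A_q^k$, pick a minimal projection under the top spectral value, write it as $uu^*$ via the remark, and conclude by an invertibility estimate. The paper uses the specific constant $\varepsilon=1/4$ and packages the final step slightly differently (defining $z=\|E_\alpha(y)\|^{-1/2}u^*pw_k^*$ so that $zyz^*=\mathbf 1$ exactly), but the ideas and the order of steps are identical; your worry about selecting a minimal projection under the top eigenvalue is not an obstacle, since the spectral projection of $E_\alpha(y)$ for $\lambda$ lies in the finite-dimensional algebra $\mathcal A_q^k$ and dominates some minimal projection there.
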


\begin{proof}
The proof repeats the arguments of the proof of Theorem V.4.6  in \cite{dav}. We present it here for the reader's convenience.

Let $\mathcal O_n^q\otimes\mathcal O_m^q\ni x\ne 0$. Then $x^* x>0$ and $E_{\alpha} (x^*x)>0$. After normalisation of  $x$ we can suppose that $\|E_{\alpha}(x^*x)\|=1$. Find $k\in\mathbb{N}$ and $y=y^*\in Fin_q^k$ such that $\|x^*x-y\|<\frac{1}{4}$. Since $E_{\alpha}$ is a contraction, one has
\[
\|E_{\alpha} (x^*x)-E_{\alpha}(y)\|<\frac{1}{4}\quad \mbox{and}\quad \|E_{\alpha}(y)\|>\frac{3}{4}.
\]
Further, $w_k^* y w_k=E_{\alpha}(y)$. Since $E_{\alpha}(y)=E_{\alpha}(y)^*\in\mathcal{A}_q^k$, by the spectral theorem for a self-adjoint operator on a finite-dimensional Hilbert space, there exists a minimal projection $p\in\mathcal{A}_q^k$, such that
\[
pE_{\alpha}(y)=E_{\alpha}(y)p=\|E_{\alpha}(y)\|\cdot p.
\]

As noted above, $p= u\,u^*$ for an isometry $u\in\mathcal O_n\otimes_q\mathcal O_m$. Put
\[
z=\|E_{\alpha}(y)\|^{-\frac{1}{2}} u^* p w_k^*.
\]
Then $\|z\|<\frac{2}{\sqrt{3}}$, and
\begin{align*}
zyz^* &= \|E_{\alpha}(y)\|^{-1} u^* p w_k^* y w_k p u = \|E_{\alpha}(y)\|^{-1} u^* p E_{\alpha}(y)  p u
\\
&=\|E_{\alpha}(y)\|^{-1} \|E_{\alpha}(y)\|u^* p  u=u^* u u^* u=\mathbf1.
\end{align*}
Then
\begin{align*}
\|\mathbf1-z x^* x z^*\|&= \| z y z^* - z x^* x z^*\| \le \|z\|^2 \cdot \|y- x^* x\|<\frac{4}{3}\cdot\frac{1}{4}=\frac{1}{3}.
\end{align*}
Hence $z x^* x z^*$ is invertible in $\mathcal O_n\otimes_q\mathcal O_m$. Let $c\in\mathcal O_n\otimes_q\mathcal O_m$ satisfies ${c z x^* x z^*=\mathbf1}$, then for $a= c z x^*$ and $b= z^*$ one has $axb=\mathbf1$.
\end{proof}

The following corollary is immediate.

\begin{theorem}\label{Onmqsimple}
The $C^*$-algebra $\mathcal{O}_n \otimes_q \mathcal{O}_m$ is nuclear, simple and purely infinite.
\end{theorem}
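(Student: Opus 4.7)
The plan is to obtain all three assertions as essentially immediate consequences of results already in hand. For nuclearity, I would recall that by Theorem \ref{cuntoep_rieff} together with Proposition \ref{Rieff_nuclear} (or by the crossed product untwisting argument given right after Theorem \ref{cuntoep_rieff}), the algebra $\mathcal{E}_{n,m}^q$ is nuclear. Since $\mathcal{O}_n \otimes_q \mathcal{O}_m = \mathcal{E}_{n,m}^q / \mathcal{M}_q$ is a quotient of a nuclear $C^*$-algebra, it is itself nuclear.

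For simplicity and pure infiniteness, I would invoke the standard characterization (see, e.g., Cuntz or V.4 of \cite{dav}): a unital $C^*$-algebra $A$ is simple and purely infinite if and only if for every nonzero $x \in A$ there exist $a, b \in A$ with $a x b = \mathbf{1}$. Indeed, if this property holds, then any nonzero ideal $J$ must contain $axb = \mathbf{1}$ for some $x \in J$, forcing $J = A$, which is simplicity; and for any nonzero positive $h$, applying the property to $h^{1/2}$ and taking $b = a^*$ yields an element $v = a h^{1/2}$ with $v v^* = \mathbf{1}$, so $h$ dominates (up to scaling) an infinite projection $v^* v$, which together with simplicity gives pure infiniteness in the sense of Cuntz. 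The hypothesis of this characterization is precisely the content of the theorem preceding the statement.

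Accordingly, the proof reduces to two lines: nuclearity is inherited from $\mathcal{E}_{n,m}^q$ through the quotient map, while simplicity and pure infiniteness are the direct translation of the previously established factorization $axb = \mathbf{1}$. There is no real obstacle here, since all the substantive work (the Rieffel deformation identification, nuclearity, and the explicit construction of the isometries $w_{k,1}$, $w_{k,2}$ conjugating arbitrary elements of $Fin_q^k$ onto the fixed-point subalgebra) has been carried out in the preceding subsections.
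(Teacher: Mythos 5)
Your proposal matches the paper's own treatment: the paper obtains nuclearity of $\mathcal{O}_n \otimes_q \mathcal{O}_m$ as a quotient of the nuclear algebra $\mathcal{E}_{n,m}^q$ and then states the theorem as an immediate corollary of the preceding result that every nonzero $x$ admits $a,b$ with $axb=\mathbf{1}$, which is exactly your argument. One small caution: in your parenthetical re-derivation of the standard characterization you cannot simply ``take $b=a^*$''; the usual argument puts $v=ah^{1/2}$, observes that $vv^*$ is invertible because $v(h^{1/2}b)=\mathbf{1}$, and then uses $w=(vv^*)^{-1/2}v$ to produce a projection $w^*w$ in the hereditary subalgebra generated by $h$ that is equivalent to the (infinite) unit --- but since you also cite the standard references for this equivalence, this slip does not affect the validity of the proof.
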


Given $q = e^{2 \pi i \varphi_0}$, consider
\begin{equation}\label{theta_q1}
\Theta_q = \left( \begin{array}{cc}
    0 & \frac{\varphi_0}{2}  \\
    -\frac{\varphi_0}{2} & 0
\end{array} \right),
\end{equation}
and construct the Rieffel deformation $(\mathcal{O}_n\otimes  \mathcal{O}_m)_{\Theta_q}$.
\begin{corollary}\label{univ_thet_cun}
The following isomorphism holds:
 \[
 \mathcal{O}_n \otimes_q \mathcal{O}_m \simeq (\mathcal{O}_n\otimes  \mathcal{O}_m)_{\Theta_q}.
 \]
\end{corollary}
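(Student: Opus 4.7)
The plan is to construct a $*$-homomorphism $\Xi\colon\mathcal{O}_n\otimes_q\mathcal{O}_m\to(\mathcal{O}_n\otimes\mathcal{O}_m)_{\Theta_q}$ directly on generators via the universal property of the quotient $\mathcal{E}_{n,m}^q/\mathcal{M}_q$, and then use simplicity (Theorem \ref{Onmqsimple}) to promote it to an isomorphism.

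First I would equip $\mathcal{O}_n$ and $\mathcal{O}_m$ with the natural gauge actions of $\mathbb{T}$ (under which the generators $s_j$ and $t_r$ are homogeneous of degree $1$), and apply Proposition \ref{Rieffel_tensor_product} with $A=\mathcal{O}_n$, $B=\mathcal{O}_m$, producing homomorphisms $\eta_A\colon\mathcal{O}_n\to(\mathcal{O}_n\otimes\mathcal{O}_m)_{\Theta_q}$ and $\eta_B\colon\mathcal{O}_m\to(\mathcal{O}_n\otimes\mathcal{O}_m)_{\Theta_q}$. Setting $\widehat{s}_j=\eta_A(s_j)$ and $\widehat{t}_r=\eta_B(t_r)$, the identities $\widehat{s}_i^*\cdot_{\Theta_q}\widehat{s}_j=\delta_{ij}\mathbf 1$ and $\widehat{t}_r^*\cdot_{\Theta_q}\widehat{t}_s=\delta_{rs}\mathbf 1$ follow because $\eta_A$, $\eta_B$ are homomorphisms, while Proposition \ref{Rieffel_tensor_product} together with its proof yields $\widehat{s}_j^*\cdot_{\Theta_q}\widehat{t}_r=q\,\widehat{t}_r\cdot_{\Theta_q}\widehat{s}_j^*$, which is exactly relation (\ref{baseqrel}) in the deformed product.

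Next I need to check the Cuntz relations $\sum_j\widehat{s}_j\cdot_{\Theta_q}\widehat{s}_j^*=\mathbf 1$ and $\sum_r\widehat{t}_r\cdot_{\Theta_q}\widehat{t}_r^*=\mathbf 1$. Each $s_js_j^*$ is homogeneous of degree $0$ in $\mathcal{O}_n$, and $1\otimes 1$ lives in the zero-homogeneous component of $\mathcal{O}_n\otimes\mathcal{O}_m$; by formula (\ref{homogeneous_rieff_product}) the deformed product coincides with the undeformed product on the zero-component, so $\sum_j\widehat{s}_j\cdot_{\Theta_q}\widehat{s}_j^*=\sum_j s_js_j^*\otimes\mathbf 1=\mathbf 1$, and similarly for $\widehat{t}_r$. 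Combining these observations, the elements $\widehat{s}_j,\widehat{t}_r$ satisfy all defining relations of $\mathcal{E}_{n,m}^q$ plus the Cuntz identities, and the universal property of $\mathcal{O}_n\otimes_q\mathcal{O}_m=\mathcal{E}_{n,m}^q/\mathcal{M}_q$ produces the desired $*$-homomorphism
\[
\Xi\colon\mathcal{O}_n\otimes_q\mathcal{O}_m\longrightarrow(\mathcal{O}_n\otimes\mathcal{O}_m)_{\Theta_q},\quad \Xi(s_j)=\widehat{s}_j,\ \Xi(t_r)=\widehat{t}_r.
\]

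Finally, surjectivity of $\Xi$ holds because $(\mathcal{O}_n\otimes\mathcal{O}_m)_{\Theta_q}$ is generated as a $C^*$-algebra by the same dense $\mathbb{Z}^2$-graded subalgebra of $\mathcal{O}_n\otimes\mathcal{O}_m$, and this subalgebra is the linear span of $\widehat{s}_{\mu_1}\widehat{s}_{\mu_2}^*\widehat{t}_{\nu_1}\widehat{t}_{\nu_2}^*$, all of which lie in the image of $\Xi$; injectivity is automatic since $\Xi(s_1)\neq 0$ and $\mathcal{O}_n\otimes_q\mathcal{O}_m$ is simple by Theorem \ref{Onmqsimple}. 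No step looks like a serious obstacle: the content of the argument is (i) that the Rieffel cocycle on degree-$(1,1)$ generators reproduces the twist $q$, and (ii) that the zero-degree identities of the Cuntz relations survive the deformation, both of which are immediate consequences of the homogeneous product formula (\ref{homogeneous_rieff_product}).
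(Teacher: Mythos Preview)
Your proposal is correct and follows essentially the same route as the paper's proof: both construct a homomorphism $\mathcal{O}_n\otimes_q\mathcal{O}_m\to(\mathcal{O}_n\otimes\mathcal{O}_m)_{\Theta_q}$ by verifying via Proposition~\ref{Rieffel_tensor_product} that $s_j\otimes\mathbf 1$ and $\mathbf 1\otimes t_r$ satisfy the defining relations, observe that the map is surjective, and then invoke simplicity (Theorem~\ref{Onmqsimple}) for injectivity. The only difference is that you spell out explicitly the survival of the Cuntz identities under the deformed product and the surjectivity argument, both of which the paper leaves implicit in the phrase ``as in the proof of Theorem~\ref{cuntoep_rieff}''.
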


\begin{proof}
As in the proof of  Theorem  \ref{cuntoep_rieff}, the universal property of $\mathcal{O}_n\otimes_q\mathcal{O}_m$ implies that the correspondence
\[
s_j\mapsto s_j \otimes \mathbf1,\quad t_r\mapsto \mathbf1 \otimes t_r,\quad j=\overline{1,n},\ r=\overline{1,m},
\]
extends to a surjective homomorphism
$\Phi\colon\mathcal{O}_n\otimes_q\mathcal{O}_m\rightarrow (\mathcal{O}_n\otimes\mathcal{O}_m)_{\Theta_q}$.
Finally, the simplicity of $\mathcal{O}_n\otimes_q\mathcal{O}_m$ implies that $\Phi$ is an isomorphism.
\end{proof}

\begin{remark}\label{equivariant_rieff_isom}
The isomorphism established in Corollary \ref{univ_thet_cun} is equivariant with respect to the introduced above actions of $\mathbb{T}^2$ on $\mathcal{O} \otimes_q \mathcal O_m$ and $(\mathcal{O}_n\otimes  \mathcal{O}_m)_{\Theta_q}$ respectively.
\end{remark}

The simplicity of $\mathcal O_n\otimes_q\mathcal O_m$ implies that $\mathcal{M}_q\subset\mathcal E_{n,m}^q$ is the largest ideal.
\begin{corollary}\label{mq_unique}
 The ideal $\mathcal{M}_q\subset\mathcal E_{n,m}^q$ is the unique largest ideal.
\end{corollary}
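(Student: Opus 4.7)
The plan is to combine the simplicity of $\mathcal{E}_{n,m}^q/\mathcal{M}_q$ (Theorem \ref{Onmqsimple}) with the structural description of $\mathcal{M}_q/\mathcal{I}_q$ (Lemma \ref{lemextmq}) and the faithfulness of the Fock representation, and derive a contradiction from the assumption that some proper ideal $\mathcal{J}$ fails to sit inside $\mathcal{M}_q$. The trivial case $\mathcal{J}=0$ is handled separately, and the fact that $\mathcal{M}_q$ itself is proper follows from $\mathcal{E}_{n,m}^q/\mathcal{M}_q$ being simple and non-zero.

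First I would show that every non-zero ideal $\mathcal{J}\subseteq\mathcal{E}_{n,m}^q$ already contains $\mathcal{I}_q$. Since $\mathcal{I}_q\simeq\mathbb{K}$ (via Proposition \ref{str_Iq_1}) is simple, $\mathcal{J}\cap\mathcal{I}_q$ is either $0$ or $\mathcal{I}_q$. In the Fock representation the element $(\mathbf{1}-Q)(\mathbf{1}-P)$ acts as the rank-one projection onto $\mathbb{C}\Omega$, so irreducibility and the classical fact that any $C^*$-algebra acting irreducibly on $\mathcal{H}$ and meeting $\mathbb{K}(\mathcal{H})$ nontrivially contains $\mathbb{K}(\mathcal{H})$ give $\pi_F^q(\mathcal{I}_q)=\mathbb{K}(\mathcal{F}_n\otimes\mathcal{F}_m)$. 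If $\mathcal{J}\cap\mathcal{I}_q=0$, then $\pi_F^q(\mathcal{J})\cdot\pi_F^q(\mathcal{I}_q)=0$; since $\pi_F^q(\mathcal{I}_q)$ contains a rank-one projection onto the cyclic vector $\Omega$, this forces $\pi_F^q(x)\Omega=0$ for all $x\in\mathcal{J}$, and cyclicity combined with the ideal property of $\mathcal{J}$ then gives $\pi_F^q(\mathcal{J})=0$, contradicting faithfulness.

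With $\mathcal{I}_q\subseteq\mathcal{J}$ in hand, I would pass to $\mathcal{E}_{n,m}^q/\mathcal{I}_q$ and analyse the image $\mathcal{J}/\mathcal{I}_q$. Simplicity of $\mathcal{E}_{n,m}^q/\mathcal{M}_q$ tells us that the only ideals of $\mathcal{E}_{n,m}^q/\mathcal{I}_q$ strictly containing $\mathcal{M}_q/\mathcal{I}_q$ is $\mathcal{E}_{n,m}^q/\mathcal{I}_q$ itself, so either $\mathcal{J}/\mathcal{I}_q\subseteq\mathcal{M}_q/\mathcal{I}_q$ (giving $\mathcal{J}\subseteq\mathcal{M}_q$, as required) or $(\mathcal{J}/\mathcal{I}_q)+(\mathcal{M}_q/\mathcal{I}_q)=\mathcal{E}_{n,m}^q/\mathcal{I}_q$. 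In the latter case the second isomorphism theorem gives
\[
\mathcal{E}_{n,m}^q/\mathcal{J}\;\simeq\;(\mathcal{M}_q/\mathcal{I}_q)\big/\bigl((\mathcal{J}/\mathcal{I}_q)\cap(\mathcal{M}_q/\mathcal{I}_q)\bigr),
\]
and by Lemma \ref{lemextmq} the right-hand side is a quotient of $\mathcal{O}_n\otimes\mathbb{K}\oplus\mathcal{O}_m\otimes\mathbb{K}$. Since both direct summands are simple, the only possible quotients are $0$, $\mathcal{O}_n\otimes\mathbb{K}$, $\mathcal{O}_m\otimes\mathbb{K}$, and the whole sum, none of which is both non-zero and unital. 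This contradicts the unitality and non-triviality of $\mathcal{E}_{n,m}^q/\mathcal{J}$ (the zero quotient would force $\mathcal{J}=\mathcal{E}_{n,m}^q$, contradicting properness).

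The main obstacle I foresee is the circularity risk: the nearby Theorem \ref{ideals_enmq} and Corollary \ref{essential} both rely, in their proofs, on the statement we are establishing. The argument above must therefore use only inputs whose proofs are independent of Corollary \ref{mq_unique}, namely Theorem \ref{Onmqsimple}, Lemma \ref{lemextmq}, Proposition \ref{str_Iq_1}, and the faithfulness and irreducibility of $\pi_F^q$.
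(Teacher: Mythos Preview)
Your proof is correct and avoids the circularity you were worried about: all the inputs you cite (Theorem \ref{Onmqsimple}, Lemma \ref{lemextmq}, Proposition \ref{str_Iq_1}, faithfulness and irreducibility of $\pi_F^q$) are established in the paper independently of Corollary \ref{mq_unique}.

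However, your route is genuinely different from the paper's. The paper argues directly with generators: assuming $\eta(\mathcal J)\neq 0$ for the quotient map $\eta$, simplicity of $\mathcal O_n\otimes_q\mathcal O_m$ gives some $\mathbf 1+x\in\mathcal J$ with $x\in\mathcal M_q$; one then approximates $x$ by a finite sum $x_\varepsilon$ of monomials $s_{\mu_1}t_{\nu_1}(\mathbf 1-P)^{\varepsilon_1}(\mathbf 1-Q)^{\varepsilon_2}t_{\nu_2}^*s_{\mu_2}^*$, chooses $\mu,\nu$ long enough that $s_\mu^*t_\nu^*x_\varepsilon=0$, and observes that $s_\mu^*t_\nu^*(\mathbf 1+x)t_\nu s_\mu\in\mathcal J$ is within $\varepsilon$ of $\mathbf 1$, hence invertible, forcing $\mathbf 1\in\mathcal J$. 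This uses only Theorem \ref{Onmqsimple} and the explicit spanning set for $\mathcal M_q$. Your argument trades that hands-on compression step for structural input: you first establish (via the Fock picture) that $\mathcal I_q$ sits inside every non-zero ideal, and then exploit the identification $\mathcal M_q/\mathcal I_q\simeq \mathcal O_n\otimes\mathbb K\oplus\mathcal O_m\otimes\mathbb K$ together with the observation that no non-zero quotient of this algebra is unital. What you gain is a byproduct---essentiality of $\mathcal I_q$---that the paper only obtains later (Corollary \ref{essential}) by invoking the full ideal classification; what the paper's approach gains is independence from Lemma \ref{lemextmq} and the Fock-representation machinery.
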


\begin{proof}
 Let $\eta\colon\mathcal E_{n,m}^q\rightarrow \mathcal{O}_n\otimes_q\mathcal{O}_m$ be the quotient homomorphism.
 Suppose that $\mathcal{J}\subset\mathcal{E}_{n,m}^q$ is a two-sided $*$-ideal. Due to the simplicity of
 $\mathcal{O}_n\otimes_q\mathcal{O}_m$ we have that either $\eta(\mathcal J)=\{0\}$ and
 $\mathcal J\subset\mathcal M_q$, or $\eta(\mathcal J)= \mathcal{O}_n\otimes_q\mathcal{O}_m$. In the latter case,
 $\mathbf1+x\in \mathcal J$ for a certain $x\in\mathcal M_q$. For any $0<\varepsilon<1$, choose
 $N_{\varepsilon}\in \mathbb N$,
 such that for
 \[
 x_{\varepsilon}=\mspace{-9mu}
 \sum_{\substack{\varepsilon_1,\varepsilon_2 \in\{0,1\},\\
 \varepsilon_1+\varepsilon_2\ne 0}}
 \sum_{\substack{\mu_1,\mu_2\in\Lambda_n,\\ |\mu_j|\le N_{\varepsilon}}}
 \sum_{\substack{\nu_1,\nu_2\in\Lambda_m,\\ |\nu_j|\le N_{\varepsilon}}} \mspace{-9mu}
 \Psi_{\mu_1,\mu_2\nu_1\nu_2}^{(\varepsilon_1,\varepsilon_2)}s_{\mu_1}t_{\nu_1}
 (\mathbf 1-P)^{\varepsilon_1}(\mathbf 1-Q)^{\varepsilon_2}t_{\nu_2}^*s_{\mu_2^*}\in\mathcal M_q
 \]
one has $\|x-x_{\varepsilon}\|<\varepsilon$. Notice that for any $\mu\in\Lambda_n$, $\nu\in\Lambda_m$
with
$|\mu|,|\nu|> N_{\varepsilon}$ one has ${s_{\mu}^* t_{\nu}^* x_{\varepsilon}=0}$.

Fix $\mu\in\Lambda_n$ and $\nu\in\Lambda_m$, $|\mu|=|\nu|>N_{\varepsilon}$, then
\[
y_{\varepsilon}=s_{\mu}^*t_{\nu}^*(\mathbf 1- x)t_{\nu}s_{\mu}=\mathbf 1 - s_{\mu}^*
t_{\nu}^*(x- x_{\varepsilon})t_{\nu}s_{\mu}
\in\mathcal J.
\]
Thus $\|s_{\mu}^*t_{\nu}^*(x- x_{\varepsilon})t_{\nu}s_{\mu}\|<\varepsilon$ implies that $y_{\varepsilon}$ is
invertible,
so $\mathbf 1\in\mathcal J$.
\end{proof}

\subsection{The isomorphism $\mathcal{O}_n \otimes_q \mathcal{O}_m \simeq \mathcal{O}_n \otimes \mathcal{O}_m$}
In this section we prove the main result of Section 3. Namely, we show that for each $q$, $|q|=1$,
\[
\mathcal{O}_n \otimes_q \mathcal{O}_m \simeq \mathcal{O}_n \otimes \mathcal{O}_m.
\]

In \cite{Skandalis}, the authors have shown that
for every $C^*$-algebra $A$ with an action $\alpha$ of $\mathbb{R}$, there exists a KK-isomorphism $t_\alpha \in KK_{1}(A, A \rtimes_\alpha \mathbb{R})$. This $t_\alpha$ is a generalization of the Connes-Thom isomorphisms for K-theory. Below we will denote by $\circ : KK(A, B) \times KK(B, C) \rightarrow KK(A, C)$ the Kasparov product, and by $\boxtimes : KK(A, B) \times KK(C, D) \rightarrow KK(A \otimes C, B \otimes D)$ the exterior tensor product. Given a homomorphism $\phi : A \rightarrow B$, put $[\phi] \in KK(A, B)$ to be the induced KK-morphism. For more details see \cite{Black,Knudsen}.

We list some properties of $t_\alpha$ that will be used below.
\begin{enumerate}
    \item Inverse of $t_\alpha$ is given by $t_{\widehat{\alpha}}$, where $\widehat{\alpha}$ is the dual action.
    \item If $A = \mathbb{C}$ with the trivial action of $\mathbb{R}$, then the corresponding element
    \[
    t_1 \in KK_1(\mathbb{C}, C_0(\mathbb{R})) \simeq \mathbb{Z}
    \]
    is the generator of the group.
    \item Let $\phi: (A, \alpha) \rightarrow (B, \beta)$ be an equivariant homomorphism. Then the following diagram commutes in KK-theory
    \[
    \begin{tikzcd}
    A \arrow[r, "t_\alpha"] \arrow[d, "\phi"] & A \rtimes_\alpha \mathbb{R} \arrow[d, "\phi \rtimes \mathbb{R}"] \\
    B \arrow[r, "t_\beta"] & B \rtimes_\beta \mathbb{R}
    \end{tikzcd}
    \]
    \item Let $\beta$ be an action of $\mathbb{R}$ on $B$. For the action $\gamma = \id_A \otimes \beta$ on $A \otimes B$ we have
    \[
    t_\gamma = \mathbf1_A \boxtimes\, t_\beta.
    \]
\end{enumerate}

We will need the classification result by Kirchberg and Philips:

\begin{theorem}[\cite{Kirchberg}, Corollary 4.2.2]
Let $A$ and $B$ be separable nuclear unital purely infinite simple $C^*$-algebras, and suppose that there exists an invertible element $\eta \in KK(A,B)$, such that $[\iota_A] \circ \eta = [\iota_B]$, where $\iota_A : \mathbb{C} \rightarrow A$ is defined by  $\iota_A(1) =\mathbf1_A$, and $\iota_B : \mathbb{C} \rightarrow B$ is defined by $\iota_B(1) =\mathbf1_B$. Then $A$ and $B$ are isomorphic.
\end{theorem}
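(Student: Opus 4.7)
The plan is to follow the Kirchberg--Phillips strategy, whose three ingredients are: tensorial $\mathcal{O}_\infty$-absorption, representability of $KK$-classes by unital $*$-homomorphisms, and an Elliott-style approximate intertwining. Since $A$ and $B$ are unital Kirchberg algebras, they are $\mathcal{O}_\infty$-stable: $A \otimes \mathcal{O}_\infty \simeq A$ and $B \otimes \mathcal{O}_\infty \simeq B$. This absorption is the source of the ``room'' needed to pass from abstract bivariant data to concrete $*$-homomorphisms, and nothing else in the proof uses the very restrictive hypotheses of the theorem in an essential way.

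The core step is to lift $\eta \in KK(A,B)$ to an honest unital $*$-homomorphism. By Kirchberg's existence theorem, every $KK$-element is represented by an asymptotic morphism $A \to B \otimes \mathcal{K}$; composing with an identification $B \otimes \mathcal{K} \otimes \mathcal{O}_\infty \simeq B \otimes \mathcal{K}$ and using a suitable cutdown produced by the $\mathcal{O}_\infty$-absorption allows one to replace the asymptotic morphism by a genuine $*$-homomorphism $\phi_0 \colon A \to B$ with $[\phi_0] = \eta$. The unit-preservation hypothesis $[\iota_A]\circ\eta = [\iota_B]$ forces $[\phi_0(\mathbf{1}_A)] = [\mathbf{1}_B]$ in $K_0(B)$; since $B$ is unital, simple and purely infinite, any two projections with the same $K_0$-class are Murray--von Neumann equivalent to $\mathbf{1}_B$, so conjugating $\phi_0$ by a suitable partial isometry yields a unital $\phi \colon A \to B$ with $[\phi] = \eta$. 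Applying the same procedure to $\eta^{-1}$ produces a unital $\psi \colon B \to A$ with $[\psi] = \eta^{-1}$.

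Next, Kirchberg's uniqueness theorem states that two unital $*$-homomorphisms between unital Kirchberg algebras with the same $KK$-class are approximately unitarily equivalent. Computing
\[
[\psi \circ \phi] = [\psi] \circ [\phi] = \eta^{-1} \circ \eta = [\id_A], \quad [\phi \circ \psi] = [\id_B],
\]
one obtains sequences of unitaries $u_n \in A$ and $v_n \in B$ such that
$\|u_n (\psi \circ \phi)(a) u_n^* - a\| \to 0$ for every $a \in A$ and $\|v_n (\phi \circ \psi)(b) v_n^* - b\| \to 0$ for every $b \in B$.

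Finally, Elliott's approximate intertwining argument converts these two approximate unitary equivalences into a genuine isomorphism: fixing dense sequences $(a_k) \subset A$ and $(b_k) \subset B$, one inductively perturbs $\phi$ and $\psi$ by the unitaries $u_n$, $v_n$ so that the alternating compositions become exponentially close to the identities on longer and longer initial segments of the chosen dense sequences, and extracts a $*$-isomorphism $\Phi \colon A \to B$ as a pointwise limit. The main obstacle, where essentially all the analytic content resides, is the realization step producing $\phi$ from $\eta$: it depends on Kirchberg's embedding theorems into $\mathcal{O}_2$, the $\mathcal{O}_\infty$-absorption theorem, and the theory of asymptotic morphisms and Cuntz pairs. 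Once the existence and uniqueness theorems are in hand, the deduction above is essentially formal.
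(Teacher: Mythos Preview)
The paper does not prove this theorem; it is quoted verbatim as a black-box classification result from Kirchberg's work (the bracket \texttt{[\textbackslash cite\{Kirchberg\}, Corollary 4.2.2]} in the theorem heading is the attribution), and is then applied in the proof of Theorem~\ref{q_stab_res}. So there is no ``paper's own proof'' to compare against.

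That said, your outline is a faithful high-level summary of the Kirchberg--Phillips argument: $\mathcal{O}_\infty$-absorption, an existence theorem realizing $KK$-classes by $*$-homomorphisms, a uniqueness theorem giving approximate unitary equivalence for maps with equal $KK$-class, and an Elliott intertwining to promote this to an isomorphism. Two small caveats. First, the passage from an asymptotic morphism $A\to B\otimes\mathcal K$ to a genuine unital $*$-homomorphism $A\to B$ is considerably more delicate than ``a suitable cutdown''; this is precisely where the deep structure theory (the $\mathcal O_2$-embedding theorem and the analysis of asymptotic morphisms/Cuntz pairs) is used, and your sketch correctly flags this as the hard step but understates the work involved. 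Second, be careful with the order of the Kasparov product versus composition of maps: with the paper's convention $\circ$ is written in the same order as function composition for $KK$-elements, so one should check that $[\psi\circ\phi]$ matches $\eta\circ\eta^{-1}$ rather than $\eta^{-1}\circ\eta$ (both equal $\mathbf 1$, so the conclusion is unaffected, but the line as written mixes conventions). As a self-contained proof this would not be accepted---each of the three cited theorems is itself a substantial paper---but as an explanation of why the cited result is true, it is accurate.
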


\begin{theorem}\label{q_stab_res}
The $C^*$-algebras $\mathcal{O}_n \otimes_q \mathcal{O}_m$ and $\mathcal{O}_n \otimes \mathcal{O}_m$ are isomorphic for any $|q| = 1$.
\end{theorem}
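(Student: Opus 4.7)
The plan is to apply the Kirchberg--Phillips classification theorem stated just above. By Corollary \ref{univ_thet_cun} it suffices to show $A:=\mathcal{O}_n\otimes\mathcal{O}_m\simeq (\mathcal{O}_n\otimes\mathcal{O}_m)_{\Theta_q}=:A_\Theta$. Both algebras are separable, unital, simple and purely infinite -- the last two properties for $A_\Theta$ follow from Corollary \ref{univ_thet_cun} together with Theorem \ref{Onmqsimple} -- and they are nuclear by the nuclearity of $\mathcal{O}_n\otimes\mathcal{O}_m$ combined with Proposition \ref{Rieff_nuclear}. Thus the task reduces to producing an invertible element $\eta\in KK(A_\Theta,A)$ satisfying $[\iota_{A_\Theta}]\circ\eta=[\iota_A]$.

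The key ingredient is Proposition \ref{prop4}, providing an isomorphism
\[
\Psi\colon A_\Theta\rtimes_{\alpha^\Theta}\mathbb{R}^2 \longrightarrow A\rtimes_\alpha\mathbb{R}^2
\]
that is compatible with the canonical embeddings of $C^*(\mathbb{R}^2)$ into the two crossed products. Iterating the one-parameter Connes--Thom element twice produces $KK$-invertible elements
\[
t_\alpha\in KK(A, A\rtimes_\alpha\mathbb{R}^2),\qquad t_{\alpha^\Theta}\in KK(A_\Theta, A_\Theta\rtimes_{\alpha^\Theta}\mathbb{R}^2),
\]
and I would set
\[
\eta:=t_{\alpha^\Theta}\circ[\Psi]\circ t_\alpha^{-1}\in KK(A_\Theta,A),
\]
which is automatically $KK$-invertible.

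To verify $[\iota_{A_\Theta}]\circ\eta=[\iota_A]$, apply the naturality property of the Connes--Thom map (property 3 in the excerpt) to the $\mathbb{R}^2$-equivariant embedding $\iota_{A_\Theta}\colon\mathbb{C}\to A_\Theta$, where $\mathbb{C}$ carries the trivial action:
\[
[\iota_{A_\Theta}]\circ t_{\alpha^\Theta}=t_0\circ[\iota_{A_\Theta}\rtimes\mathbb{R}^2],
\]
with $t_0$ the Thom element for the trivial action of $\mathbb{R}^2$ on $\mathbb{C}$. By formula (\ref{unital_embedding}) the homomorphism $\iota_{A_\Theta}\rtimes\mathbb{R}^2$ coincides with $i_{\mathbb{R}^2}^{A_\Theta}\colon C^*(\mathbb{R}^2)\to A_\Theta\rtimes_{\alpha^\Theta}\mathbb{R}^2$, and analogously for $A$; the commutative diagram in Proposition \ref{prop4} then gives $[i_{\mathbb{R}^2}^{A_\Theta}]\circ[\Psi]=[i_{\mathbb{R}^2}^{A}]$. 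Substituting these and invoking naturality once more for $\iota_A$ yields
\[
[\iota_{A_\Theta}]\circ\eta=t_0\circ[i_{\mathbb{R}^2}^{A_\Theta}]\circ[\Psi]\circ t_\alpha^{-1}=t_0\circ[i_{\mathbb{R}^2}^{A}]\circ t_\alpha^{-1}=[\iota_A]\circ t_\alpha\circ t_\alpha^{-1}=[\iota_A],
\]
after which Kirchberg--Phillips delivers $A\simeq A_\Theta$, and hence the theorem via Corollary \ref{univ_thet_cun}. The main technical point to watch will be the bookkeeping between the single-parameter Connes--Thom element cited in the excerpt and its $\mathbb{R}^2$-iterate -- in particular the fact that naturality survives iteration -- together with keeping the sources and targets of each Kasparov composition straight; no essentially new ingredient beyond what has already been assembled is required.
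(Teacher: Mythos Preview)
Your proposal is correct and follows essentially the same route as the paper's own proof: both apply Kirchberg--Phillips, build the $KK$-equivalence $\eta=t_{\alpha^\Theta}\circ[\Psi]\circ t_\alpha^{-1}$ from the iterated Connes--Thom element together with the crossed-product isomorphism of Proposition~\ref{prop4}, and verify the unit condition via naturality of $t$ applied to the unital inclusions $\iota$ (using \eqref{unital_embedding} to identify $\iota\rtimes\mathbb{R}^2$ with $i_{\mathbb{R}^2}$). The paper merely makes the iteration explicit by writing $t_\alpha=t_{\alpha_1}\circ(\mathbf{1}_{C_0(\mathbb{R})}\boxtimes t_{\alpha_2})$ and packages your chain of equalities into a single commutative diagram, but the content is the same.
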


\begin{proof}
Throughout the proof we will distinguish between the actions of $\mathbb{T}^2$ on $\mathcal{O}_n \otimes \mathcal{O}_m$ and on $\mathcal{O}_n \otimes_q \mathcal{O}_m$, denoting the latter by $\alpha^q$. Due to Theorem \ref{Onmqsimple}, the both algebras are separable nuclear unital simple and purely infinite.

Further,  Corollary \ref{univ_thet_cun}, Proposition \ref{prop4}, and Remark \ref{equivariant_rieff_isom} yield the isomorphism
\[
\Psi : (\mathcal{O}_n \otimes \mathcal{O}_m) \rtimes_\alpha \mathbb{R}^2 \rightarrow (\mathcal{O}_n \otimes_q \mathcal{O}_m) \rtimes_{\alpha^q} \mathbb{R}^2.
\]
Decompose the crossed products as follows:
\begin{align*}
(\mathcal{O}_n \otimes \mathcal{O}_m) \rtimes_\alpha \mathbb{R}^2 &\simeq (\mathcal{O}_n \otimes \mathcal{O}_m) \rtimes_{\alpha_1} \mathbb{R} \rtimes_{\alpha_2} \mathbb{R},
\\
(\mathcal{O}_n \otimes_q \mathcal{O}_m) \rtimes_{\alpha^q} \mathbb{R}^2 &\simeq (\mathcal{O}_n \otimes_q \mathcal{O}_m) \rtimes_{\alpha_1^q} \mathbb{R} \rtimes_{\alpha_2^q} \mathbb{R}.
\end{align*}
Define
\begin{align*}
t_\alpha &= t_{\alpha_1} \circ (\mathbf1_{C_0(\mathbb{R})} \boxtimes t_{\alpha_2}) \in KK(\mathcal{O}_n \otimes \mathcal{O}_m,(\mathcal{O}_n \otimes \mathcal{O}_m) \rtimes_\alpha \mathbb{R}^2),
\\
t_{\alpha^q} &= t_{\alpha_1^q} \circ  (\mathbf1_{C_0(\mathbb{R})} \boxtimes t_{\alpha_2^q}) \in KK(\mathcal{O}_n \otimes_q \mathcal{O}_m,(\mathcal{O}_n \otimes_q \mathcal{O}_m) \rtimes_{\alpha^q} \mathbb{R}^2),
\end{align*}
Then
\[ \eta = t_{\alpha^q} \circ [\Psi] \circ t_{\alpha}^{-1} \in KK(\mathcal{O}_n \otimes_q \mathcal{O}_m, \mathcal{O}_n \otimes \mathcal{O}_m) \]
is a $KK$-isomorphism. The property $[\iota_{\mathcal{O}_n \otimes_q \mathcal{O}_m}] \circ \eta = [\iota_{\mathcal{O}_n \otimes \mathcal{O}_m}]$ follows from the commutativity of the following diagram
\[
\begin{tikzcd}[column sep = 0.5cm]
\mathbb{C} \arrow[rr, "t_1 \circ (\mathbf1_{C_0(\mathbb{R})} \boxtimes\, t_1)"] \arrow[d, "\iota_{\mathcal{O}_n \otimes_q \mathcal{O}_m}"]
& &
C_0(\mathbb{R}^2) \arrow[rr,"(\mathbf1_{C_0(\mathbb{R})} \boxtimes\, t_1)^{-1} \circ\, t_1^{-1}"] \arrow[dl,"\iota_{\mathcal{O}_n \otimes_q \mathcal{O}_m} \rtimes \mathbb{R}^2"'] \arrow[dr,"\iota_{\mathcal{O}_n \otimes \mathcal{O}_m} \rtimes \mathbb{R}^2"]
& &
\mathbb{C} \arrow[d, "\iota_{\mathcal{O}_n \otimes \mathcal{O}_m}" left] \\
\mathcal{O}_n \otimes_q \mathcal{O}_m \arrow[r,"t_{\alpha^q}"] &
(\mathcal{O}_n \otimes_q \mathcal{O}_m) \rtimes_{\alpha^q} \mathbb{R}^2 \arrow[rr,"\Psi"] & &
(\mathcal{O}_n \otimes \mathcal{O}_m) \rtimes_{\alpha} \mathbb{R}^2 \arrow[r,"t_{\alpha}^{-1}"] &
\mathcal{O}_n \otimes \mathcal{O}_m
\end{tikzcd}
\]
\end{proof}
\begin{remark}
After our paper was submitted, we were informed by Prof. M. Weber that in unpublished part of his  PhD thesis he studied a multi-parameter twisted tensor product of Cuntz algebras and obtained independently the analog of Theorem \ref{q_stab_res}.
\end{remark}

\subsection{Computation of $\mathsf{Ext}$ for $\mathcal{E}_{n,m}^q$}
Here we show that $\mathsf{Ext}(\mathcal{O}_n \otimes_q \mathcal{O}_m, \mathcal{M}_q)=0$ if $\gcd(n-1,m-1)=1$. We use the isomorphism $\mathcal O_n\otimes_q \mathcal O_m\simeq\mathcal O_n\otimes\mathcal O_m$, ${|q|=1}$.

Recall the notion of UCT property for $KK$-theory, see \cite{Black}.

\begin{definition}\label{uct}
Suppose $A$ and $B$ are separable nuclear $C^*$-algebras. We say that a pair $(A, B)$ satisfies the Universal Coefficient Theorem (UCT), if the following sequences are exact, $j\in\mathbb Z_2$,
\begin{equation*}
0 \rightarrow\bigoplus_{i\in\mathbb Z_2} Ext^1_\mathbb{Z}(K_{i+1}(A), K_{i+j}(B)) \rightarrow KK_{j}(A, B) \rightarrow \bigoplus_{i \in \mathbb Z_2} Hom(K_{i}(A), K_{i+j}(B)) \rightarrow 0. 
\end{equation*}
We say that $A$ satisfies UCT if $(A, B)$ satisfies UCT for every $B$.
\end{definition}

It is known that $\mathcal{O}_n \otimes_q \mathcal{O}_m \simeq \mathcal{O}_n \otimes \mathcal{O}_m$ satisfies UCT.
The following statement is an easy consequence of the Kunneth formula.

\begin{theorem}
Let $d = \gcd(n - 1, m - 1)$. Then
\[ 
K_0(\mathcal{O}_n \otimes_q \mathcal{O}_m) \simeq \mathbb{Z} / d\mathbb{Z}, \quad  K_1(\mathcal{O}_n \otimes_q \mathcal{O}_m) \simeq \mathbb{Z} / d\mathbb{Z}, 
\]
\end{theorem}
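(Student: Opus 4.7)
The plan is to invoke the Künneth formula of Rosenberg--Schochet, using the isomorphism $\mathcal{O}_n \otimes_q \mathcal{O}_m \simeq \mathcal{O}_n \otimes \mathcal{O}_m$ established in Theorem \ref{q_stab_res}, together with the classical computation of Cuntz, namely $K_0(\mathcal{O}_k) = \mathbb{Z}/(k-1)\mathbb{Z}$ and $K_1(\mathcal{O}_k) = 0$. Since $\mathcal{O}_n \otimes \mathcal{O}_m$ lies in the bootstrap class (both factors are nuclear and satisfy UCT, recall Definition \ref{uct}), the Künneth short exact sequence
\[
0 \to \bigoplus_{i+j=\ell}K_i(\mathcal{O}_n)\otimes K_j(\mathcal{O}_m) \to K_\ell(\mathcal{O}_n\otimes\mathcal{O}_m) \to \bigoplus_{i+j=\ell+1}\mathrm{Tor}^{\mathbb Z}_1(K_i(\mathcal{O}_n),K_j(\mathcal{O}_m)) \to 0
\]
is available for $\ell\in\mathbb{Z}_2$.

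For $\ell=0$, since $K_1(\mathcal{O}_n)=K_1(\mathcal{O}_m)=0$, the only surviving contributions are $K_0(\mathcal{O}_n)\otimes K_0(\mathcal{O}_m)$ on the left and $\mathrm{Tor}(K_0(\mathcal{O}_n),K_1(\mathcal{O}_m))\oplus\mathrm{Tor}(K_1(\mathcal{O}_n),K_0(\mathcal{O}_m))=0$ on the right. For $\ell=1$, the left-hand side vanishes and only the term $\mathrm{Tor}(K_0(\mathcal{O}_n),K_0(\mathcal{O}_m))$ survives on the right. Using the standard identities
\[
\mathbb{Z}/(n{-}1)\otimes_{\mathbb{Z}}\mathbb{Z}/(m{-}1) \simeq \mathbb{Z}/d\mathbb{Z}, \qquad \mathrm{Tor}^{\mathbb{Z}}_1(\mathbb{Z}/(n{-}1),\mathbb{Z}/(m{-}1)) \simeq \mathbb{Z}/d\mathbb{Z},
\]
with $d=\gcd(n-1,m-1)$, the short exact sequences degenerate to isomorphisms, yielding $K_0(\mathcal{O}_n\otimes\mathcal{O}_m)\simeq\mathbb{Z}/d\mathbb{Z}$ and $K_1(\mathcal{O}_n\otimes\mathcal{O}_m)\simeq\mathbb{Z}/d\mathbb{Z}$.

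There is essentially no obstacle here: each ingredient is a standard and cited fact, so the ``proof'' is a bookkeeping exercise. The only point to verify is the applicability of the Künneth theorem, which follows from nuclearity of the Cuntz algebras and the fact that they sit in the bootstrap category. An alternative route, avoiding Künneth entirely, would be to combine Proposition \ref{Rieff_K_theory} (invariance of K-theory under Rieffel deformation) with Corollary \ref{univ_thet_cun} to reduce directly to K-theory of $\mathcal{O}_n\otimes\mathcal{O}_m$; one would then still need the same Künneth computation.
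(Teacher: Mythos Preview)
Your proof is correct and follows essentially the same approach as the paper: both reduce to $\mathcal{O}_n\otimes\mathcal{O}_m$ via the isomorphism of Theorem~\ref{q_stab_res}, apply the K\"unneth short exact sequence together with Cuntz's computation $K_0(\mathcal{O}_k)=\mathbb{Z}/(k-1)\mathbb{Z}$, $K_1(\mathcal{O}_k)=0$, and read off the answer from the standard identities for $\otimes_{\mathbb Z}$ and $\mathrm{Tor}_1^{\mathbb Z}$ of cyclic groups. Your remark on the alternative route through Proposition~\ref{Rieff_K_theory} and Corollary~\ref{univ_thet_cun} is also valid but, as you note, lands at the same K\"unneth computation.
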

\begin{proof}
The Kunneth formula for K-theory, see \cite{Black} Theorem 23.1.3,  gives the following short exact sequences, $j\in\mathbb Z_2$,
\[
0 \rightarrow \bigoplus_{i\in\mathbb Z_2}  K_i(\mathcal{O}_n) \otimes_\mathbb{Z} K_{i + j}(\mathcal{O}_m) \rightarrow K_j(\mathcal{O}_n \otimes \mathcal{O}_m) \rightarrow \bigoplus_{i\in\mathbb Z_2} Tor_1^\mathbb{Z}(K_{i}(\mathcal{O}_n), K_{i + j + 1}(\mathcal{O}_m)) \rightarrow 0
\]
It is a well known fact in homological algebra, see \cite{McLane}, that for an abelian group $A$
\[
Tor_1^\mathbb{Z}(A, \mathbb{Z}/d\mathbb{Z}) \simeq Ann_A(d) = \{a \in A  \mid  da = 0 \}.
\]
In particular,
\[
Tor_1^\mathbb{Z}(\mathbb{Z}/n\mathbb{Z}, \mathbb{Z}/m\mathbb{Z}) \simeq \mathbb{Z}/\gcd(n,m)\mathbb{Z}.
\]
Recall that, see \cite{Cuntz_Ktheory},
\[
K_0(\mathcal{O}_n) = \mathbb{Z} / (n - 1) \mathbb{Z}, \quad K_1(\mathcal{O}_n) = 0.
\]
Hence, for $\mathcal{O}_n \otimes \mathcal{O}_m$, one has the following short exact sequences:
\begin{gather*}
0 \rightarrow \mathbb{Z}/(n-1)\mathbb{Z} \otimes_\mathbb{Z} \mathbb{Z}/(m-1)\mathbb{Z} \rightarrow K_0(\mathcal{O}_n \otimes \mathcal{O}_m) \rightarrow 0 \rightarrow 0,
\\
0 \rightarrow 0 \rightarrow K_1(\mathcal{O}_n \otimes \mathcal{O}_m) \rightarrow \mathbb{Z} / d \mathbb{Z} \rightarrow 0.\qedhere
\end{gather*}
\end{proof}

Next step is to compute the K-theory of $\mathcal{M}_q$.

\begin{theorem}
Let $d = \gcd(n - 1, m - 1)$. Then
\[
K_0(\mathcal{M}_q) \simeq \mathbb{Z} / d\mathbb{Z} \oplus \mathbb{Z}, \text{ }  K_1(\mathcal{M}_q) \simeq 0.
\]
\end{theorem}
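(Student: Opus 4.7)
The strategy is to apply the Mayer--Vietoris six-term exact sequence in $K$-theory to the pair of ideals $\mathcal{I}_1^q, \mathcal{I}_2^q \subset \mathcal{M}_q$. Since $\mathcal{M}_q = \mathcal{I}_1^q + \mathcal{I}_2^q$ and $\mathcal{I}_q = \mathcal{I}_1^q \cap \mathcal{I}_2^q$, one has the short exact sequence
\[
0 \to \mathcal{I}_q \xrightarrow{\Delta} \mathcal{I}_1^q \oplus \mathcal{I}_2^q \xrightarrow{\sigma} \mathcal{M}_q \to 0, \qquad \Delta(x) = (x,x),\ \sigma(x,y) = x - y.
\]
Theorem \ref{i1i2q_comm} together with the stability of $K$-theory yields $K_0(\mathcal{I}_q) = \mathbb{Z}$, $K_0(\mathcal{I}_1^q) = K_0(\mathcal{O}_m^{(0)}) = \mathbb{Z}$, $K_0(\mathcal{I}_2^q) = K_0(\mathcal{O}_n^{(0)}) = \mathbb{Z}$, and the $K_1$ of each of these three algebras vanishes (the values of $K_*(\mathcal{O}_k^{(0)})$ come from the standard six-term sequence applied to $0 \to \mathbb{K} \to \mathcal{O}_k^{(0)} \to \mathcal{O}_k \to 0$). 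Consequently the six-term sequence degenerates to
\[
0 \to K_1(\mathcal{M}_q) \to \mathbb{Z} \xrightarrow{\alpha} \mathbb{Z} \oplus \mathbb{Z} \to K_0(\mathcal{M}_q) \to 0,
\]
where $\alpha = ((i_1)_*, (i_2)_*)$ and $i_j \colon \mathcal{I}_q \hookrightarrow \mathcal{I}_j^q$ is the inclusion.

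The heart of the proof is to identify $\alpha$. Under the isomorphism of Theorem \ref{i1i2q_comm}, $i_1$ corresponds to the inclusion $\mathbb{K} \otimes \mathbb{K} \hookrightarrow \mathcal{O}_m^{(0)} \otimes \mathbb{K}$ induced by the canonical embedding $\mathbb{K} \hookrightarrow \mathcal{O}_m^{(0)}$ on the first tensor factor. The generator of $K_0(\mathbb{K})$ is represented by the vacuum projection $P_m = \mathbf{1} - \sum_{r=1}^m t_r t_r^*$, and the relations $[t_r t_r^*] = [t_r^* t_r] = [\mathbf{1}]$ in $K_0(\mathcal{O}_m^{(0)}) = \mathbb{Z}\cdot[\mathbf{1}]$ give $[P_m] = (1-m)[\mathbf{1}]$. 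Hence $(i_1)_*$ is multiplication by $1-m$, and by symmetry $(i_2)_*$ is multiplication by $1-n$, yielding $\alpha(1) = (1-m,\, 1-n)$.

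Since $n,m \ge 2$, the element $(1-m,1-n)$ is nonzero, so $\alpha$ is injective, whence $K_1(\mathcal{M}_q) = 0$. To compute the cokernel, write $(1-m,1-n) = d(a,b)$ with $\gcd(a,b) = 1$; by the Smith normal form the vector $(a,b)$ extends to a $\mathbb{Z}$-basis of $\mathbb{Z}^2$, and therefore
\[
K_0(\mathcal{M}_q) \simeq \mathbb{Z}^2 / \mathbb{Z}\cdot d(a,b) \simeq \mathbb{Z}/d\mathbb{Z} \oplus \mathbb{Z}.
\]
The only step requiring genuine care is the $K$-theoretic computation $[P_m] = (1-m)[\mathbf{1}]$ (and its $n$-analogue), which completely pins down $\alpha$; everything else is a routine manipulation of the six-term sequence and an elementary Smith normal form argument. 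Note that one could equally well work with the extension $0 \to \mathcal{I}_q \to \mathcal{M}_q \to \mathcal{I}_1^q/\mathcal{I}_q \oplus \mathcal{I}_2^q/\mathcal{I}_q \to 0$ from Lemma \ref{lemextmq}, but this would still require identifying the class of the extension in $\mathsf{Ext}^1_{\mathbb{Z}}$, so the Mayer--Vietoris route is cleaner.
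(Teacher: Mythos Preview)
Your proof is correct and takes a genuinely different route from the paper's. The paper uses \emph{two} six-term sequences: first the one associated to $0 \to \mathcal{I}_q \to \mathcal{M}_q \to \mathcal{I}_1^q/\mathcal{I}_q \oplus \mathcal{I}_2^q/\mathcal{I}_q \to 0$ to get $K_1(\mathcal{M}_q)=0$ and $K_0(\mathcal{M}_q)=\mathbb{Z}\oplus\mathsf{Tors}$, and then the one for $0 \to \mathcal{M}_q \to \mathcal{E}_{n,m}^q \to \mathcal{O}_n\otimes_q\mathcal{O}_m \to 0$ to pin down $\mathsf{Tors}\simeq\mathbb{Z}/d\mathbb{Z}$; this second step relies on the earlier identifications $K_*(\mathcal{E}_{n,m}^q)\simeq K_*(\mathcal{O}_n^{(0)}\otimes\mathcal{O}_m^{(0)})$ (via Rieffel deformation) and $K_*(\mathcal{O}_n\otimes_q\mathcal{O}_m)\simeq K_*(\mathcal{O}_n\otimes\mathcal{O}_m)$ (via K\"unneth). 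Your Mayer--Vietoris argument for $0\to\mathcal{I}_q\to\mathcal{I}_1^q\oplus\mathcal{I}_2^q\to\mathcal{M}_q\to 0$ is more self-contained: it needs only $K_*(\mathcal{O}_k^{(0)})$ and the single elementary computation $[P_m]=(1-m)[\mathbf 1]$ to identify the connecting map, so it avoids both the Rieffel $K$-theory invariance and the K\"unneth formula entirely. The paper's approach, on the other hand, makes visible how $K_0(\mathcal{M}_q)$ sits inside $K_0(\mathcal{E}_{n,m}^q)\simeq\mathbb{Z}$, which is of independent interest for the extension computation in the next subsection.
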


\begin{proof}
By Theorem \ref{cuntoep_rieff}, Proposition \ref{Rieff_K_theory}, and \cite{Cuntz_Ktheory}, Proposition 3.9,
\begin{align*}
K_0(\mathcal{E}_{n,m}^q) &= K_0((\mathcal{O}_n^{(0)}\otimes\mathcal{O}_m^{(0)})_{\Theta_q}) = K_0(\mathcal{O}_n^0 \otimes \mathcal{O}_m^0) = \mathbb{Z},
\\
K_1(\mathcal{E}_{n,m}^q) &= K_1((\mathcal{O}_n^{(0)}\otimes\mathcal{O}_m^{(0)})_{\Theta_q}) = K_1(\mathcal{O}_n^0 \otimes \mathcal{O}_m^0) = 0.
\end{align*}
Applying the 6-term exact sequence for
\[ 0
\rightarrow \mathbb{K} \rightarrow \mathcal{M}_q \rightarrow \mathcal O_n\otimes\mathbb K \oplus \mathcal O_m\otimes\mathbb K \rightarrow 0 ,
\]
we get
\[
\begin{tikzcd}
\mathbb{Z} \arrow[r] & K_0(\mathcal{M}_q) \arrow[r] & \mathbb{Z}/(n-1)\mathbb{Z} \oplus \mathbb{Z}/(m-1)\mathbb{Z} \arrow[d] \\
0 \arrow[u] & \arrow[l] K_1(\mathcal{M}_q) & \arrow[l] 0
\end{tikzcd}
\]
Then  $K_1(\mathcal{M}_q) = 0$,
and elementary properties of finitely generated abelian groups imply that
\[ K_0(\mathcal{M}_q) = \mathbb{Z} \oplus \mathsf{Tors},\] where $\mathsf{Tors}$ is a direct sum of finite cyclic groups.

Further, the following exact sequence
\[
0 \longrightarrow \mathcal{M}_q \longrightarrow \mathcal{E}_{n,m}^q \rightarrow \mathcal{O}_n \otimes_q \mathcal{O}_m\longrightarrow 0
\]
gives
\[
\begin{tikzcd}
K_0(\mathcal{M}_q) \arrow[r, "p"] & \mathbb{Z} \arrow[r] & \mathbb{Z} / d \mathbb{Z} \arrow[d] \\
\mathbb{Z} / d \mathbb{Z} \arrow[u, "i"] & \arrow[l] 0 & \arrow[l] 0
\end{tikzcd}
\]
The map $p : K_0(\mathcal{M}_q) \simeq \mathbb{Z} \oplus \mathsf{Tors} \rightarrow \mathbb{Z}$ has form $p=(p_1,p_2)$, where
\[
p_1\colon\mathbb Z\rightarrow\mathbb Z,\quad p_2\colon\mathsf{Tors}
\rightarrow\mathbb Z.
\]
Evidently, $p_2=0$, and $p\ne 0$ implies that $\ker p_1=\{0\}$.
Thus,
\[
\ker p = \mathsf{Tors} = \mathrm{Im}(i) \simeq \mathbb{Z} / d\mathbb{Z}.
\qedhere
\]
\end{proof}

\begin{theorem}\label{Ext_comp}
Let $d=\gcd(n - 1, m - 1)=1$. Then $\mathsf{Ext}(\mathcal{O}_n \otimes_q \mathcal{O}_m, \mathcal{M}_q)=0$.
\end{theorem}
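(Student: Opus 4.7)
The plan is to reduce the statement to a routine application of the Universal Coefficient Theorem, exploiting the fact that under the hypothesis $d=\gcd(n-1,m-1)=1$ the $K$-theory of $\mathcal{O}_n\otimes_q\mathcal{O}_m$ is trivial.

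First, I would record the standard Kasparov-type identification
\[
\mathsf{Ext}(\mathcal{O}_n\otimes_q\mathcal{O}_m,\mathcal{M}_q)\simeq KK_1(\mathcal{O}_n\otimes_q\mathcal{O}_m,\mathcal{M}_q).
\]
This is legitimate because $\mathcal{O}_n\otimes_q\mathcal{O}_m$ is separable and nuclear by Theorem \ref{Onmqsimple}, while $\mathcal{M}_q$ is separable and stable by the corollary after Lemma \ref{lemextmq}; these are exactly the hypotheses under which $\mathsf{Ext}$ computes $KK_1$.

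Second, I would invoke the UCT (Definition \ref{uct}) for the pair $(\mathcal{O}_n\otimes_q\mathcal{O}_m,\mathcal{M}_q)$. The algebra $\mathcal{O}_n\otimes_q\mathcal{O}_m\simeq\mathcal{O}_n\otimes\mathcal{O}_m$ of Theorem \ref{q_stab_res} lies in the bootstrap class, since each Cuntz algebra $\mathcal{O}_k$ is in the bootstrap class and the bootstrap class is closed under tensor products with nuclear $C^*$-algebras. Taking $j=1$ in the UCT exact sequence yields
\[
0 \to \bigoplus_{i\in\mathbb Z_2} \mathrm{Ext}^1_\mathbb{Z}(K_{i+1}(\mathcal{O}_n\otimes_q\mathcal{O}_m),K_{i+1}(\mathcal{M}_q)) \to KK_1(\mathcal{O}_n\otimes_q\mathcal{O}_m,\mathcal{M}_q) \to \bigoplus_{i\in\mathbb Z_2} \mathrm{Hom}(K_i(\mathcal{O}_n\otimes_q\mathcal{O}_m),K_{i+1}(\mathcal{M}_q)) \to 0 .
\]

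Third, under the assumption $d=1$, the $K$-theory computation preceding this theorem gives
\[
K_0(\mathcal{O}_n\otimes_q\mathcal{O}_m)=K_1(\mathcal{O}_n\otimes_q\mathcal{O}_m)=0.
\]
Since $\mathrm{Hom}(0,-)=0$ and $\mathrm{Ext}^1_\mathbb{Z}(0,-)=0$, both the left-most and right-most terms of the UCT sequence are trivial. Thus $KK_1(\mathcal{O}_n\otimes_q\mathcal{O}_m,\mathcal{M}_q)=0$, and combined with the identification above, $\mathsf{Ext}(\mathcal{O}_n\otimes_q\mathcal{O}_m,\mathcal{M}_q)=0$, which is what was to be shown.

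There is essentially no obstacle: all the serious work, namely verifying that the quotient is nuclear/simple/purely infinite, that $\mathcal{M}_q$ is stable, and computing the $K$-groups, has been accomplished in the preceding subsections. The only point worth stating explicitly is membership of $\mathcal{O}_n\otimes\mathcal{O}_m$ in the bootstrap class, which is standard.
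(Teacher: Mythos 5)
Your proposal is correct and follows essentially the same route as the paper: identify $\mathsf{Ext}(\mathcal{O}_n\otimes_q\mathcal{O}_m,\mathcal{M}_q)$ with $KK_1$, apply the UCT sequence with $j=1$, and feed in the $K$-groups computed in the preceding theorems. The only cosmetic difference is that you set $d=1$ at the outset so that $K_*(\mathcal{O}_n\otimes_q\mathcal{O}_m)=0$ and both outer UCT terms vanish immediately, whereas the paper keeps $d$ general and exhibits the sequence $0\rightarrow\mathbb{Z}/d\mathbb{Z}\oplus\mathbb{Z}/d\mathbb{Z}\rightarrow KK_1\rightarrow\mathbb{Z}/d\mathbb{Z}\rightarrow 0$ before invoking $d=1$.
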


\begin{proof}
Recall that for nuclear $C^*$-algebras $\mathsf{Ext}(A, B) \simeq KK_1(A, B)$. 

We use  the sequence from Definition \ref{uct} for $j=1$, $A = \mathcal{O}_n \otimes_q \mathcal{O}_m$ and $B = \mathcal{M}_q$:
\[ 
0 \rightarrow \bigoplus_{i \in \mathbb{Z}_2} Ext_\mathbb{Z}^1(K_i(A), K_i(B)) \rightarrow KK_1(A, B) \rightarrow \bigoplus_{i \in \mathbb{Z}_2} {Hom}(K_i(A), K_{i + 1}(B)).
\]
Since $K_0(A)=K_1(A)=\mathbb Z/d\mathbb Z$ and $K_0(B)=\mathbb Z\oplus\mathbb Z/d\mathbb Z$, $K_1(B)=0$, one has
\[ 
{Hom}(K_0(A), K_1(B)) = 0, \quad {Hom}(K_1(A), K_0(B)) = \mathbb{Z} / d \mathbb{Z},
\]
and, see \cite{McLane},
\[ 
Ext_\mathbb{Z}^1(K_0(A), K_0(B)) = \mathbb{Z} / d \mathbb{Z} \oplus \mathbb{Z} / d \mathbb{Z},\quad \ Ext_\mathbb{Z}^1(K_1(A), K_1(B)) = 0.
\] 
Hence the following sequence is exact
\[ 0
\rightarrow \mathbb{Z} / d \mathbb{Z} \oplus \mathbb{Z} / d \mathbb{Z} \rightarrow KK_1(\mathcal{O}_n \otimes_q \mathcal{O}_m, \mathcal{M}_q) \rightarrow \mathbb{Z} / d \mathbb{Z} \rightarrow 0. 
\]
\end{proof}

By Theorem \ref{Ext_comp}, for the case of $\gcd(n-1, m-1)=1$ one can immediately deduce that extension classes of
\[ 0
\rightarrow \mathcal{M}_q \rightarrow \mathcal{E}_{n,m}^q \rightarrow \mathcal{O}_n \otimes_q \mathcal{O}_m \rightarrow 0,
\]
and
\[ 0
\rightarrow \mathcal{M}_1 \rightarrow \mathcal{E}_{n,m}^1 \rightarrow \mathcal{O}_n \otimes \mathcal{O}_m \rightarrow 0,
\]
coincide in $\mathsf{Ext}(\mathcal{O}_n \otimes \mathcal{O}_m, \mathcal{M}_1)$ and are trivial. These extensions are essential, however in general case one does not have an immediate generalization of Proposition \ref{Voiculescu}. Thus the study of the problem whether $\mathcal{E}_{n,m}^q \simeq \mathcal{E}_{n,m}^1$ would require further investigations, see \cite{E_theory,Eilers}.
\ \\

\textbf{Acknowledgements}
The work on the paper was initiated during the visit  of V. Ostrovskyi, D. Proskurin and R. Yakymiv to Chalmers University of Technology. We appreciate the working atmosphere and stimulating discussions with Prof. Lyudmila Turowska and Prof. Magnus Goffeng. We are grateful to Prof. Eugene Lytvynov for fruitful discussions on generalized statistics and his kind hospitality during the visit of D. Proskurin to Swansea University.
We also indebted to Prof. K. Iusenko for helpful comments and remarks.



\end{document}